\newtheorem{theorem}{Theorem}[section]
\newtheorem{proposition}[theorem]{Proposition}
\newtheorem{lemma}[theorem]{Lemma}
\newtheorem{corollary}[theorem]{Corollary}
\newtheorem*{theorema}{Theorem A}
\theoremstyle{definition}
\newtheorem{definition}[theorem]{Definition}
\theoremstyle{remark}
\newtheorem{remark}[theorem]{Remark}
\newtheorem*{remarks*}{Remarks}
\numberwithin{equation}{section}
\newcommand{\CC}{{\mathbb C}}
\newcommand{\ZZ}{{\mathbb Z}}
\newcommand{\RR}{{\mathbb R}}
\newcommand{\LL}{{\mathbb L}}
\newcommand{\TT}{{\mathbb T}}
\newcommand{\KK}{{\mathbb K}}
\newcommand{\PP}{{\mathbb P}}
\newcommand{\U}{{\mathcal U}}
\newcommand{\fg}{{\mathfrak g}}
\newcommand{\fk}{{\mathfrak k}}
\DeclareMathOperator{\tr}{Tr}
\newcommand{\Log}{\mbox{\textrm Log}}
\renewcommand{\span}{\mbox{\textrm span}}
\newcommand{\res}{\mbox{\textrm res}}
\newcommand{\Vol}{\mbox{\textrm Vol}}
\newcommand{\Hom}{\mbox{\textrm Hom}}
\newcommand{\pr}{\mbox{\textrm pr}}
\begin{document}
\openup3pt

\title{Stability functions}

\author{Daniel Burns}\address{Department of Mathematics \\
University of Michigan \\ Ann Arbor \\ MI \\ 48109 \\ USA}
\thanks{Daniel Burns is supported in part by NSF grant DMS-0514070.}
\email{dburns@umich.edu}
\author{Victor Guillemin}\address{Department of Mathematics\\
Massachusetts Institute of Technology \\Cambridge, MA 02139 \\
USA}\thanks{Victor Guillemin is supported in part by NSF grant
DMS-0408993.}\email{vwg@math.mit.edu}
\author{Zuoqin Wang}
\address{Department of Mathematics\\
Massachusetts Institute of Technology \\ Cambridge, MA  02139 \\
USA} \thanks{Zuoqin Wang is supported in part by NSF grant
DMS-0408993.}\email{wangzq@math.mit.edu}

\date{}

\begin{abstract}
In this article we discuss the role of \emph{stability functions}
in geometric invariant theory and apply stability function
techniques to various types of asymptotic problems in the K\"ahler geometry of GIT quotients. 
We discuss several particular classes of examples, namely, toric varieties, spherical varieties and the symplectic version of quiver varieties.
\end{abstract}

\maketitle




\section{Introduction}
\label{sec:1}

Suppose $(M, \omega)$ is a pre-quantizable K\"ahler manifold, and
$(\LL, \langle \cdot, \cdot \rangle)$ is a pre-quantization of
$(M, \omega)$. Let $\nabla$ be the metric connection on $\LL$.
Then the quantizability assumption is equivalent to the condition
that the curvature form equals the negative of the K\"ahler form,
   \begin{equation}
   \label{eqn:1.1}
   curv(\nabla) = -\omega.
   \end{equation}
For any positive integer $k$ we will denote the $k^{th}$ tensor
power of $\LL$ by $\LL^k$. The Hermitian structure on $\LL$
induces a Hermitian structure on $\LL^k$. Denote by
$\Gamma_{hol}(\LL^k)$ the space of holomorphic sections of
$\LL^k$. (If $M$ is compact, $\Gamma_{hol}(\LL^k)$ is a finite
dimensional space, whose dimension is given by the Riemann-Roch
theorem,
   \begin{equation}
   \label{eqn:1.2}
   \dim \Gamma_{hol}(\LL^k)  = k^d \Vol(M) + k^{d-1} \int_M
   c_1(M)\wedge \omega^{d-1} + \cdots \ .
   \end{equation}
for $k$ sufficiently large.) We equip this space with the $L^2$
norm induced by the Hermitian structure,
   \begin{equation}
   \label{eqn:1.3}
   \langle s_1, s_2 \rangle  = \int_M \langle s_1(x),
   s_2(x) \rangle \frac{\omega^d}{d!}.
   \end{equation}
(In semi-classical analysis $\Gamma_{hol}(\LL^k)$ is the Hilbert
space of quantum states associated to $M$, and $k$ plays the role
of the inverse of Planck's constant.)

If one has a holomorphic action of a compact Lie group on $M$
which lifts to $\LL$ one gets from the data above a Hermitian line
bundle on the geometric invariant theory (GIT) quotient of $M$.
One of the purposes of this paper is to compare the $L^2$ norms of
holomorphic sections of $\LL$ with the $L^2$ norms of holomorphic
sections of this quotient line bundle equipped with the quotient
metric. More explicitly, suppose $G$ is a connected compact Lie
group, $\fg$ its Lie algebra, and $\tau$ a holomorphic Hamiltonian
action of $G$ on $M$ with a proper moment map $\Phi$. Moreover,
assume that there exists a lifting, $\tau^\#$, of $\tau$ to $\LL$,
which preserves the Hermitian inner product $\langle \cdot, \cdot
\rangle$. If the $G$-action on $\Phi^{-1}(0)$ is free, the
quotient space
   \begin{equation*}
   M_{red} = \Phi^{-1}(0)/G
   \end{equation*}
is a compact K\"ahler manifold. Moreover, the Hermitian line
bundle $(\LL, \langle \cdot, \cdot \rangle)$ on $M$ naturally
descends to a Hermitian line bundle $(\LL_{red}, \langle \cdot,
\cdot\rangle_{red})$ on $M_{red}$, and the curvature form of
$\LL_{red}$ is the reduced K\"ahler form $-\omega_{red}$, thus
$\LL_{red}$ is a pre-quantum line bundle over $M_{red}$ (c.f. \S 2
for more details). From these line bundle identifications one gets
a natural map
   \begin{equation}
   \label{eqn:qr}
   \Gamma_{hol}(\LL^k)^G \to \Gamma_{hol}(\LL_{red}^k)
   \end{equation}
and one can prove
\begin{theorem}
[Quantization commutes with reduction for K\"ahler manifolds]
Suppose that for some $k_0 > 0$ the set
$\Gamma_{hol}(\LL^{k_0})^G$ contains a nonzero element. Then the
map (\ref{eqn:qr}) is bijective for every $k$.
\end{theorem}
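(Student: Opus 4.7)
The plan is to exploit the complexification $G_\CC$ of $G$ acting holomorphically on $(\LL, M)$, together with the Kempf--Ness correspondence between the symplectic quotient $M_{red} = \Phi^{-1}(0)/G$ and the GIT quotient $M^{ss}/G_\CC$. Throughout, a $G$-invariant holomorphic section $s$ of $\LL^k$ is automatically $G_\CC$-equivariant: $s(g\cdot x) = g\cdot s(x)$ for all $g \in G_\CC$, since $G$-equivariance propagates by holomorphic continuation.

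The central analytic input is a Kempf--Ness-type maximum principle: for any $s \in \Gamma_{hol}(\LL^k)^G$ and any $G_\CC$-orbit $\mathcal O \subset M$, the Hermitian norm $y \mapsto |s(y)|$ restricted to $\mathcal O$ attains its maximum precisely on $\mathcal O \cap \Phi^{-1}(0)$. This follows from the standard identity that, along a ray $t \mapsto \exp(it\xi)\cdot x$ with $\xi \in \fg$, the function $\log |s|^2$ is strictly concave in $t$ with first derivative proportional to $\langle \Phi, \xi\rangle$. Letting $M^{ss}$ denote the semistable locus (the union of the non-vanishing sets of all $G$-invariant holomorphic sections of all positive powers of $\LL$), the Kempf--Ness theorem identifies $M^{ss} = G_\CC \cdot \Phi^{-1}(0)$, with each $G_\CC$-orbit meeting $\Phi^{-1}(0)$ in a single $G$-orbit; in particular $M^{ss}/G_\CC \cong M_{red}$. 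The hypothesis $\Gamma_{hol}(\LL^{k_0})^G \neq 0$ makes $M^{ss}$ nonempty, and since $M \setminus M^{ss}$ is a proper analytic subset, $M^{ss}$ is open and dense in $M$.

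Injectivity of (\ref{eqn:qr}) is then immediate: if $s \in \Gamma_{hol}(\LL^k)^G$ maps to zero, then $s$ vanishes on $\Phi^{-1}(0)$, hence on $M^{ss}$ by $G_\CC$-equivariance, and finally on all of $M$ by density. For surjectivity, given $\sigma \in \Gamma_{hol}(\LL_{red}^k)$, I would first pull $\sigma$ back through the principal $G$-bundle $\Phi^{-1}(0) \to M_{red}$ to a $G$-invariant holomorphic section $\tilde\sigma$ of $\LL^k|_{\Phi^{-1}(0)}$, and then extend by the formula $s(g\cdot x) := g\cdot \tilde\sigma(x)$ for $x \in \Phi^{-1}(0)$, $g \in G_\CC$. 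This is well defined by the uniqueness of the $G$-orbit of $\Phi^{-1}(0)$ in each $G_\CC$-orbit, and holomorphicity of $s$ on $M^{ss}$ is checked locally via holomorphic slices for the $G_\CC$-action near points of $\Phi^{-1}(0)$. The maximum principle then gives $|s| \le \sup_{M_{red}} |\sigma| < \infty$ on all of $M^{ss}$, so by Riemann's extension theorem (applied in local trivializations of $\LL^k$) $s$ extends uniquely to a holomorphic section on all of $M$, which is $G$-invariant by continuity and whose image under (\ref{eqn:qr}) is $\sigma$.

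The most delicate step, I expect, is verifying the holomorphicity of the $G_\CC$-equivariant extension of $\tilde\sigma$ from $\Phi^{-1}(0)$ to $M^{ss}$: this ultimately rests on producing holomorphic slices for the complexified action, which in the K\"ahler setting requires combining the symplectic cross-section theorem with the integrability of the complex structure along the complexified orbit directions.
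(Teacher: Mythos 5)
Your proposal is correct and follows essentially the same route as the paper's (Guillemin--Sternberg) argument: your Kempf--Ness maximum principle is exactly the content of Lemmas \ref{lemma:3.2}--\ref{lemma:3.3} and Theorem \ref{thm:3.5} (nonpositivity of the stability function), the identification $M^{ss}=G_\CC\cdot\Phi^{-1}(0)$ together with the containment of $M\setminus M_{st}$ in a proper analytic subvariety is the argument of \S\ref{sec:2.2} and \S\ref{sec:3.2}, and the bounded extension of $\pi^*\sigma$ across the unstable locus is \S\ref{sec:3.4} via Corollary \ref{cor:3.7}. The step you flag as delicate can in fact be bypassed: since $\pi\colon M_{st}\to M_{red}$ and the quotient bundle map $\LL_{st}\to\LL_{red}$ are holomorphic, your $G_\CC$-equivariant extension of $\tilde\sigma$ is nothing but $\pi^*\sigma$, which is automatically holomorphic, so no holomorphic slice construction is required.
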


The proof of this theorem in \cite{GuS82} implicitly involves the
notion of \emph{stability function} and one of the goals of this
article will be to make the role of this function in geometric
invariant theory more explicit. To define this function let
$G_\CC$ be the complexification of $G$ (See \S 2) and let $M_{st}$
be the $G_\CC$ flow-out of $\Phi^{-1}(0)$. Modulo the assumptions
in the theorem above $M_{st}$ is a Zariski open subset of $M$, and
if $G$ acts freely on $\Phi^{-1}(0)$ then $G_\CC$ acts freely on
$M_{st}$ and
   \begin{equation*}
   M_{red} = \Phi^{-1}(0)/ G = M_{st}/G_\CC.
   \end{equation*}
Let $\pi$ be the projection of $M_{st}$ onto $M_{red}$. The
stability function associated to this data is a real-valued
$C^\infty$ map $\psi: M_{st} \to \RR$ with the defining property
   \begin{equation}
   \label{eqn:1.4}
   \langle \pi^*s, \pi^*s \rangle  = e^{\psi} \pi^* \langle s, s
   \rangle_{red}
   \end{equation}
for one or, equivalently, all sections $s$ of our line bundle
$\LL_{red}$. This function can also be viewed as a relative
potential function, relating the K\"ahler form $\omega$ on
$M_{st}$ to the K\"ahler form on $M_{red}$, i.e. $\psi$ satisfies
   \begin{equation}
   \label{eqn:1.5.a}
   \omega - \pi^* \omega_{red} = \sqrt{-1} \partial \bar \partial
   \psi.
   \end{equation}
   %
We will show that this function is proper, non-positive, and takes
its maximum value $0$ precisely on $\Phi^{-1}(0)$. We will also
show that this property suffices to determine it in general by
showing that on the gradient trajectory of any component of $\Phi$
it satisfies a simple ODE. We will then go on to show how to exploit this fact in a number of concrete cases.

Another basic property of $\psi$ is that, for any point $p \in
\Phi^{-1}(0)$, $p$ is the only critical point of the restriction
of $\psi$ to the ``orbit" $\exp{(\sqrt{-1}\fg)} \cdot p$ (Here
$\exp{(\sqrt{-1}\fg)}$ is the ``imaginary" part of $G_\CC$.) Let
$dx$ be the (Riemannian) volume form on this orbit, which is
induced by the restriction to $\exp{(\sqrt{-1}\fg)} \cdot p$ of
the K\"ahler-Riemannian metric on $M_{st}$. By applying the method
of steepest descent, one gets an asymptotic expansion
   \begin{equation}
   \label{eqn:1.5}
   \int_{\exp{(\sqrt{-1}\fg)} \cdot p} f \, e^{\lambda \psi} dx \sim
   \left(\frac{\lambda}{\pi}\right)^{-m/2} \left( 1 +
   \sum_{i=1}^\infty c_i \lambda^{-i} \right)
   \end{equation}
for $\lambda$ large, where $f$ is a smooth function of compact support, $m$ is the dimension of $G$, and $c_i$
are constants depending on the Taylor expansion of $f$ at $p$. (Throughout this paper we will fix
the notations $d=\dim_\CC M, m=\dim_\RR G$ and $n=d-m=\dim_\CC
M_{red}$.)

The asymptotic formula (\ref{eqn:1.5}) has many applications. First, if $f$ is $G$-invariant,
by integrating (\ref{eqn:1.5}) over the $G$-orbit through $p$, we
get
   \begin{equation}
   \label{eqn:1.6}
   \int_{G_\CC \cdot p} f \, e^{\lambda \psi} \frac{\omega^m}{m!} \sim
   \left(\frac{\lambda}{\pi}\right)^{-m/2} V(p) f(p) \left( 1 + O(\frac
   1{\lambda})\right)
   \end{equation}
as $\lambda \to \infty$, where $V(p)$ is the Riemannian volume of
the $G$-orbit through $p$. Integrating this over $M_{red}$ thus gives, for any holomorphic section $s_k
\in \Gamma_{hol}(\LL_{red}^k)$,
   \begin{equation}
   \label{eqn:1.7}
   \left(\frac k{\pi}\right)^{m/2} \|\pi^*s_k\|^2 =
   \|V^{1/2}s_k\|^2_{red} + O(\frac 1k).
   \end{equation}
For more detail, see sections \S \ref{sec:4.1} - \S \ref{sec:4.3} below. This can be viewed as a ``$\frac 12$-form correction" which makes
the identification of $\Gamma_{hol}(\LL^k_{red})$ with
$\Gamma_{hol}(\LL^k)^G$ an isometry modulo $O(\frac 1k)$. (Compare
with \cite{HaK}, \cite{Li} for similar results on $\frac 12$-form
corrections).

A second application of (\ref{eqn:1.5}) concerns the
measures associated with holomorphic sections of $\LL^k_{red}$:
Let $\mu$ and $\mu_{red}$ be the symplectic volume forms on $M$
and $M_{red}$ respectively. Given a sequence of ``quantum states"
   \begin{equation*}
   s_k \in \Gamma_{hol}(\LL^k_{red})
   \end{equation*}
one can, by (\ref{eqn:1.5}), relate the asymptotics of the measures
   \begin{equation}
   \label{eqn:1.8}
   \langle s_k, s_k\rangle \mu_{red}
   \end{equation}
defined by these quantum states for appropriately chosen sequences
of $s_k$'s to the asymptotics of the corresponding measures
   \begin{equation}
   \label{eqn:1.9}
   \langle \pi^* s_k, \pi^* s_k \rangle \mu
   \end{equation}
on $M$. In the special case where $M$ is $\CC^d$ with the flat metric
and $M_{red}$ a toric variety with the quotient metric the
asymptotics of (\ref{eqn:1.9}) can be computed explicitly by Mellin
transform techniques (see \cite{GuW} and \cite{Wan}) and from this
computation together with the identity (\ref{eqn:1.5}) one gets an
alternative proof of the asymptotic properties of (\ref{eqn:1.8})
for toric varieties described in \cite{BGU}.

One can also regard the function
   \begin{equation}
   \label{eqn:1.10}
   \langle s_k, s_k \rangle: M_{red} \to \RR
   \end{equation}
as a random variable and study the asymptotic properties of its
probability distribution, i.e., the measure
   \begin{equation}
   \label{eqn:1.11}
   \langle s_k, s_k \rangle_* \mu_{red},
   \end{equation}
on the real line. These properties, however, can be read off from
the asymptotic behavior of the \emph{moments} of this measure,
which are, by definition just the integrals
   \begin{equation}
   \label{eqn:1.12}
   m_{red}(l, s_k, \mu_{red}) = \int_{M_{red}} \langle s_k, s_k
   \rangle^l d\mu_{red}, \indent l=1, 2, \cdots
   \end{equation}
and by (\ref{eqn:1.5}) the asymptotics of these integrals can be
related to the asymptotics of the corresponding integrals on $M$
viz
   \begin{equation}
   \label{eqn:1.13}
   m(l, \pi^* s_k, \mu) = \int_M \langle \pi^* s_k, \pi^* s_k
   \rangle^l \mu\ .
   \end{equation}
In the toric case Shiffman, Tate and Zelditch showed in \cite{STZ}
that if $s_k$ lies in the weight space
$\Gamma_{hol}(\LL^k)^{\alpha_k}$, where $\alpha_k = k \alpha +
O(\frac 1k)$, and $\nu = {(\Phi_P^* \omega_{FS})^n}/{n!}$ is the
pull-back of the Fubini-Study volume form on the projective space
via the monomial embedding $\Phi_P$, then, if $s_k$ has $L^2$ norm
$1$,
   \begin{equation}
   \label{eqn:1.14}
   \left(\frac k{\pi}\right)^{-n(l-1)/2} m_{red}(l, s_k, \nu)
   \sim \frac{c^l}{l^{n/2}}
   \end{equation}
as $k$ tends to infinity, $c$ being a positive constant. From this
they derived a ``universal distribution law" for such measures. We
will give below a similar asymptotic result for the moments
associated with another volume form, $V\mu_{red}$, which can be
derived from (\ref{eqn:1.5}) and an analogous, but somewhat simpler
version \ of (\ref{eqn:1.14}) for the moments (\ref{eqn:1.13})
upstairs on $\CC^d$.

Related to these results is another application of (\ref{eqn:1.5}):
Let
   \begin{equation*}
   \pi_N: L^2(\LL^N, \mu) \to \Gamma_{hol}(\LL^N)
   \end{equation*}
be the orthogonal projection of the space of $L^2$-sections of
$\LL^N$ onto the space of holomorphic sections of $\LL^N$ and for
$f \in C^\infty(M)$ let
   \begin{equation*}
   M_f: L^2(\LL^N, \mu) \to L^2(\LL^N, \mu)
   \end{equation*}
be the ``multiplication by $f$'' operator. If $M$ is compact
(which will be the case below with $M$ replaced by $M_{red}$) then
by contracting this operator to $\Gamma_{hol}(\LL^N)$ and taking
its trace one gets a measure
   \begin{equation}
   \label{eqn:1.15}
   \mu_N(f) = \tr (\pi_N M_f \pi_N)
   \end{equation}
which one can also write (somewhat less intrinsically) as the
``density of states"
   \begin{equation}
   \label{eqn:1.16}
   \mu_N = \sum \langle s_{N,i}, s_{N,i} \rangle \mu,
   \end{equation}
the $s_{N,i}$'s being an orthonormal basis of
$\Gamma_{hol}(\LL^N)$ inside $L^2(\LL^N,\mu)$. By a theorem of
Boutet de Monvel-Guillemin, $\mu_N(f)$ has an asymptotic
expansion,
   \begin{equation}
   \label{eqn:1.17}
   \mu_N(f) \sim \sum_{i=d-1}^{-\infty} a_i(f) N^i
   \end{equation}
as $N \to \infty$. One of the main results of this paper is a
$G$-invariant version of Boutet de Monvel-Guillemin's result. More
precisely, if we let $\pi_N^G$ be the orthogonal projection
   \begin{equation*}
   \pi_N^G: L^2(\LL^N, \mu) \to \Gamma_{hol}(\LL^N)^G,
   \end{equation*}
then for any $G$-invariant function $f$ on $M$ we have the
asymptotic expansion
   \begin{equation}
   \label{eqn:1.18}
   \mu_N^G(f) = \tr(\pi_N^G M_f \pi_N^G) \sim
   \sum_{i=n-1}^\infty a^G_i(f) N^i
   \end{equation}
as $N \to \infty$. Moreover, the identity (\ref{eqn:1.5}) enables
one to read off this upstairs $G$-invariant expansion from the
downstairs expansion and vice versa. Notice that for this
$G$-invariant expansion, we don't have to require the upstairs
manifold to be compact. For example, for toric varieties, the
upstairs space, $\CC^d$, is not compact, so the space of
holomorphic sections is infinite dimensional, and
Boutet-Guillemin's result doesn't apply; however the $G$-invariant
version of the upstairs asymptotics can, in this case, be computed
directly by Mellin transform techniques (\cite{GuW}, \cite{Wan})
together with an Euler-Maclaurin formula for convex lattice
polytopes (\cite{GuS06}) and hence one gets from (\ref{eqn:1.5}) an
alternative proof of the asymptotic expansion of $\mu_N$ for toric
varieties obtained in \cite{BGU}.

As a last application of the techniques of this paper we discuss
``Bohr-Sommerfeld" issues in the context of GIT theory. Let
$\nabla_{red}$ be the K\"ahlerian connection on $\LL_{red}$ with
defining property,
   \begin{equation*}
   curv(\nabla_{red})=-\omega_{red}.
   \end{equation*}
A Lagrangian submanifold $\Lambda_{red} \subset M_{red}$ is said
to be \emph{Bohr-Sommerfeld} if the connection
$\iota^*_{\Lambda_{red}} \nabla_{red}$ is trivial. In this case
there exists a covariant constant non-vanishing section, $s_{BS}$,
of $\iota^*_{\Lambda_{red}} \LL_{red}$. Viewing $s_{BS}$ as a
``delta section" of $\LL_{red}$ and projecting it onto
$\Gamma_{hol}(\LL_{red})$, one gets a holomorphic section
$s_{\Lambda_{red}}$ of $\LL_{red}$, and one would like to know
\begin{enumerate}
\item Is this section nonzero?
\item What, in fact, is this section?
\item What about the sections $s_{\Lambda_{red}}^{(k)}$ of
$\LL^k_{red}$? Do they have interesting asymptotic properties as
$k \to \infty$? Do they, for instance, ``concentrate"
asymptotically on $\Lambda_{red}$?
\end{enumerate}
We will show that the ``downstairs" versions of these questions on
$M_{red}$ can be translated into ``upstairs" versions of these
questions on $M$ where they often become more accessible.

The paper is organized into three parts. Part 1, \S2-\S5, focuses on the
general theory of stability functions, part 2, \S6-\S8, on
stability functions on toric varieties, and part 3, \S9-\S12, on
stability functions on some non-toric varieties, especially spherical varieties and symplectic quiver varieties.

More precisely, in \S 2 we start with some basic facts about K\"ahler
reduction and geometric invariant theory and define the reduced
line bundle $\LL_{red}$. In \S 3 we prove a number of basic
properties of the stability function, and in \S 4 we derive the
asymptotic expansion (\ref{eqn:1.5}), and use it to show that the
map between $\Gamma_{hol}(\LL^k_{red})$ and
$\Gamma_{hol}(\LL^k)^G$ can be made into an asymptotic isometry by
means of $\frac 12$-forms. Then in \S 5 we deduce from
(\ref{eqn:1.5}) the results about density of states, probability
distributions and Bohr-Sommerfeld sections which we described
above. 

The second part of the paper begins in \S 6 with a review of the Delzant description of toric
varieties as GIT quotients of $\CC^d$ and discuss some of its
implications. In \S 7 we derive an explicit formula, in terms of
moment polytope data, for the stability function involved in this
description and in \S\S 8.1 and 8.2 specialize the results of \S 5 to toric
varieties and discuss their relation to the results of \cite{BGU} alluded to above. In \S 8.3 we show that 
the standard (or ``flat") reduced metrics are just one example of a large class of reduced metrics for which the results of \S 5 hold. In addition, we show that the stability function formula of \S 7 generalizes appropriately to these metrics. 

We make a few remarks here on the generality of the results described here for toric varieties. As already noted, \cite{STZ} contains very interesting results similar to those in this part of the paper, but for a wider class of metrics (the so-called Bergman metrics) than the metric obtained from Delzant's construction by reduction (the flat reduced metric). In fact, in \cite{SoZ} the key step of analyzing the $L^2$-norms of  holomorphic sections of $\LL_{red}^N$ is carried out for arbitrary toric K\"ahler metrics. Since the thrust of \cite{SoZ} was directed towards Donaldson's program on special K\"ahler metrics, this result was not applied at that time to the problems described here, though it clearly could be. The calculations sketched in \S 8.3 show that the reduction methods used here in full only for the flat reduced metric can be extended to the general toric K\"ahler metric \emph{on the open $G_{\CC}$ orbit}. We leave to a later day analyzing the ``boundary condition!
 
 s" along the lower dimensional $G_{\CC}$ strata and reconciling the methods presented here with those of \cite{STZ} and \cite{SoZ} in the general case on a projective toric variety.

Finally in the last part of this paper we take a tentative
first step toward generalizing the results of \S6-\S8 to some
non-abelian analogues of toric varieties: \emph{spherical
varieties} and \emph{symplectic quiver varieties}. The simplest examples of spherical varieties are the
coadjoint orbits of $\mathcal U(n)$ viewed as $\mathcal
U(n-1)$-manifolds. Following Shaun Martin, we will show how these
varieties can be obtained by symplectic reduction from a linear
action of a compact Lie group on $\CC^N$, and, as above for toric
varieties, compute their stability functions. In \S 11, following
Joel Kamnitzer \cite{Kam} we show how Martin's description of
$\mathcal U(n)$-coadjoint orbits extends to quiver varieties, and
for some special classes of quiver varieties (e.g. the polygon
spaces of Kapovich-Millson \cite{KaM}) show that there are
stability function formulae similar to those for coadjoint orbits.
For spherical varieties in general the question still seems to be
open as to whether they have ``nice" descriptions as GIT quotients
analogous to the Delzant description of toric varieties.

\subsection*{Acknowledgement}
The authors are very grateful to the referee for his helpful comments and for a suggested reference.


\section{K\"ahler reduction vs geometric invariant theory}
\label{sec:2}

\subsection{K\"ahler reduction}
\label{sec:2.1}
Suppose $(M, \omega)$ is a symplectic manifold, $G$ a connected
compact Lie group acting in a Hamiltonian fashion on $M$, and
$\Phi: M \to \fg^*$ a moment map, i.e.,  $\Phi$ is equivariant
with respect to the given $G$-action on $M$ and the coadjoint
$G$-action on $\fg^*$, with the defining property
   \begin{equation}
   \label{eqn:2.1}
   d\langle \Phi, v \rangle = \iota_{v_M}\omega,\qquad v \in \fg,
   \end{equation}
where $v_M$ is the vector field on $M$ generated by the
one-parameter subgroup $\{\exp(-tv)\ |\ t \in \RR\}$ of $G$.
Furthermore we assume that $\Phi$ is proper, $0$ is a regular
value and that $G$ acts freely on the zero level set
$\Phi^{-1}(0)$. Then by the Marsden-Weinstein theorem, the
quotient space
   \begin{equation*}
   M_{red} = \Phi^{-1}(0) / G
   \end{equation*}
is a connected compact symplectic manifold with symplectic form
$\omega_{red}$ satisfying
   \begin{equation}
   \label{eqn:2.2}
   \iota^* \omega = \pi_0^* \omega_{red},
   \end{equation}
where $\iota: \Phi^{-1}(0) \hookrightarrow M$ is the inclusion
map, and $\pi_0: \Phi^{-1}(0) \to M_{red} $ the quotient map.
Moreover, if $\omega$ is integral, so is $\omega_{red}$; and if
$(M, \omega)$ is K\"ahler with holomorphic $G$-action, then
$M_{red}$ is a compact K\"ahler manifold and $\omega_{red}$ is a
K\"ahler form.

\subsection{GIT quotients}
\label{sec:2.2}
The K\"ahler quotient $M_{red}$ also has the following GIT
description:

Let $G_\CC$ be the complexification of $G$, i.e.,  $G_\CC$ is the
unique connected complex Lie group with Lie algebra $\fg_\CC = \fg
\oplus \sqrt{-1}\fg$ which contains $G$ as its maximal compact
subgroup. We will assume that the action of $G$ on $M$ extends
canonically to a holomorphic action of $G_\CC$ on $M$ (This will
automatically be the case if $M$ is compact). The infinitesimal
action of $G_\CC$ on $M$ is given by
   \begin{equation}
   \label{eqn:2.3}
   w_M = J v_M
   \end{equation}
for $v \in \fg, w = \sqrt{-1}v$, where $J$ is the automorphism of
$TM$ defining the complex structure.

The set of stable points, $M_{st}$, of $M$ (with respect to this
$G_\CC$ action) is defined to be the $G_\CC$-flow out of
$\Phi^{-1}(0)$:
   \begin{equation}
   \label{eqn:2.4}
   M_{st} = G_\CC \cdot \Phi^{-1}(0).
   \end{equation}
This is an open subset of $M$ on which $G_\CC$ acts freely, and
each $G_\CC$-orbit in $M_{st}$ intersects $\Phi^{-1}(0)$ in
precisely one $G$-orbit, c.f. \cite{GuS82}. Moreover, for any
$G$-invariant holomorphic section $s_k$ of $\LL^k$, $M_{st}$
contains all $p$ with $s_k(p) \ne 0$. (For a proof, see the
arguments at the end of \S 3.2). In addition, if $M$ is compact
$M-M_{st}$ is just the common zero sets of these $s_k$'s. Since
$M_{st}$ is a principal $G_\CC$ bundle over $M_{red}$, the $G_\CC$
action on $M_{st}$ is proper. The quotient space $M_{st}/G_\CC$
has the structure of a complex manifold. Moreover, since each
$G_\CC$-orbit in $M_{st}$ intersects $\Phi^{-1}(0)$ in precisely
one $G$-orbit, this GIT quotient space coincides with the
symplectic quotient:
   \begin{equation*}
   M_{red} = M_{st} / G_\CC.
   \end{equation*}
In other words, $M_{red}$ is a K\"ahler manifold with
$\omega_{red}$ its K\"ahler form, and the projection map $\pi:
M_{st} \to M_{red}$ is holomorphic.

\subsection{Reduction at the quantum level}
\label{sec:2.3}
Suppose $(\LL, \langle \cdot, \cdot \rangle)$ is a pre-quantum
line bundle over $M$. There is a unique holomorphic connection
$\nabla$ on $\LL$, (called the metric connection), which is
compatible with the Hermitian inner product on $\LL$, i.e.,
satisfies the compatibility condition for every locally
nonvanishing holomorphic section $s: U \to \LL$,
   \begin{equation}
   \label{eqn:2.5}
   \frac{\nabla s}s = \partial \log{\langle s, s \rangle} \in
   \Omega^{1, 0}(U).
   \end{equation}
The pre-quantization condition amounts to requiring that the
curvature form of the connection $\nabla$ is $-\omega$, i.e.,
   \begin{equation}
   \label{eqn:2.6}
   curv(\nabla) := -\sqrt{-1} \bar\partial \partial \log{\langle
   s, s\rangle}= -\omega.
   \end{equation}

To define reduction on the quantum level, we assume that the $G$
action on $M$ can be lifted to an action $\tau^\#$ of $G$ on
$\LL$ by holomorphic line bundle automorphisms. By averaging, we
may assume that $\tau^\#$ preserves the metric $\langle \cdot,
\cdot \rangle$, and thus preserves the connection $\nabla$ and the
curvature form $\omega$. By Kostant's formula (\cite{Kos}), the
infinitesimal action of $\fg$ on sections of $\LL$ is
   \begin{equation}
   \label{eqn:2.7}
   L_v s = \nabla_{v_M}s - \sqrt{-1} \langle \Phi, v \rangle s
   \end{equation}
for all smooth sections $s \in \Gamma(\LL)$ and all $v \in \fg$.
Since $G$ acts freely on $\Phi^{-1}(0)$, the lifted action
$\tau^\#$ is free on $\iota^*\LL$. The quotient
   \begin{equation*}
   \LL_{red} = \iota^* \LL /G
   \end{equation*}
is now a holomorphic line bundle over $M_{red}$.

On the other hand, by \cite{GuS82}, the lifted action $\tau^\#$
can be extended canonically to an action $\tau_\CC^\#$ of $G_\CC$
on $\LL$. Denote by $\LL_{st}$ the restriction of $\LL$ to the
open set $M_{st}$, then $G_\CC$ acts freely on $\LL_{st}$, and we
get the GIT description of the quotient line bundle,
   \begin{equation*}
   \LL_{red} = \LL_{st} / G_{\CC}.
   \end{equation*}
On $\LL_{red}$ there is a naturally defined Hermitian structure,
$\langle \cdot, \cdot \rangle_{red}$, defined by
   \begin{equation}
   \label{eqn:2.8}
   \pi_0^* \langle s, s \rangle_{red}  = \iota^* \langle \pi^*s,
   \pi^* s\rangle
   \end{equation}
for all $s \in \Gamma(\LL_{red})$. Moreover, the induced curvature
form of $\LL_{red}$ is the reduced K\"ahler form $\omega_{red}$.
In other words, the quotient line bundle $(\LL_{red}, \langle
\cdot, \cdot \rangle_{red})$ is a pre-quantum line bundle over the
quotient space $(M_{red}, \omega_{red})$.


\section{The stability function}
\label{section 3}

\subsection{Definition of the stability function}
\label{sec:3.1}
The stability function $\psi: M_{st} \to \mathbb R$ is defined to
satisfy
   \begin{equation}
   \label{eqn:3.1}
   \langle \pi^*s, \pi^*s \rangle  = e^{\psi} \pi^* \langle s, s
   \rangle_{red}\ .
   \end{equation}
More precisely, suppose $U$ is an open subset in $M_{st}$ and $s:
U \to \LL_{red}$ a non-vanishing section, then $\psi$ restricted to
$\pi^{-1}(U)$ is defined to be
   \begin{equation}
   \label{eqn:3.2}
   \psi = \log{\langle \pi^* s, \pi^* s \rangle} - \pi^*\log{\langle
   s, s\rangle_{red}}\ .
   \end{equation}
Obviously this definition is independent of the choice of $s$.

It is easy to see from the definition that $\psi$ is a $G$-invariant
function on $M_{st}$ which vanishes on $\Phi^{-1}(0)$, and by (2.6),
   \begin{equation}
   \label{eqn:3.3}
   \omega = \pi^* \omega_{red} + \sqrt{-1}\ \bar{\partial} \partial
   \psi.
   \end{equation}
Thus $\psi$ can be thought of as a potential function for the
restriction of $\omega$ to $M_{st}$ \emph{relative} to
$\omega_{red}$.

\begin{remark}
\label{remark:3.1} From the definition it is also easy to see that
the stability function depends on the metric on the line bundle.
One such dependence that is crucial in the whole paper is the
following: If $\LL$ is the trivial line bundle over $\CC$ with the
Bargmann metric and $\psi$ the stability function for some
K\"ahler quotient of $\CC$ associated with $\LL$, then $\LL^N$ is
still the trivial line bundle over $\CC$ but with a slightly
different metric, i.e. the $N^{th}$ tensor of the Bargmann metric,
and the corresponding stability function becomes $N\psi$.
\end{remark}
\begin{remark}
\label{remark:3.2} (Reduction by stages) Let $G=G_1 \times G_2$ be
a product of compact Lie groups $G_1$ and $G_2$. Then by reduction
in stages $M_{red}$ can be identified with
$(M^{(1)}_{red})^{(2)}$, where $M^{(1)}_{red}$ is the reduction of
$M$ with respect to $G_1$ and $(M^{(1)}_{red})^{(2)}$ the
reduction of $M^{(1)}_{red}$ with respect to $G_2$. Let $M_{st}^G$
and $M_{st}^{G_1}$ be the set of stable points in $M$ with respect
to the $G$-action and $G_1$-action respectively, and
$(M_{red}^{(1)})_{st}^{G_2}$ the set of stable points in
$M_{st}^{(1)}$ with respect to the $G_2$-action. Denote by $\pi_1$
the projection of $M_{st}$ onto $M^{(1)}_{red}$. We claim that
$M_{st}^{G} \subset M_{st}^{G_1}$ and
$\pi_1^{-1}((M_{red}^{(1)})_{st}^{G_2}) = M_{st}^G$. The first of
these assertions is obvious and the second assertion follows from
the identification
   \begin{equation*}
   \aligned
   \pi_1^{-1}((M_{red}^{(1)})_{st}^{G_2})
   & = \pi_1^{-1}((G_2)_\CC \bar\Phi_2^{-1}(0))\\
   & = (G_1)_\CC (\pi_1^{-1}((G_2)_\CC \bar\Phi_2^{-1}(0))
       \cap \Phi_1^{-1}(0)) \\
   & = (G_1)_\CC (G_2)_\CC (\pi_1^{-1}(\bar \Phi_2^{-1}(0))
       \cap \Phi_1^{-1}(0)) \\
   & = G_\CC (\Phi_2^{-1}(0) \cap \Phi_1^{-1}(0)) \\
   & = G_\CC \Phi^{-1}(0).
   \endaligned
   \end{equation*}
Thus $\psi = \psi_1 + \pi_1^* \psi^1_2$, where $\psi$ is the
stability function associated with reduction of $M$ by $G$,
$\psi_1$ the stability function associated with reduction of $M$
by $G_1$, and $\psi^1_2$ the stability function associated with
the reduction of $M^{(1)}_{red}$ by $G_2$.
\end{remark}
\begin{remark}
\label{remark:3.3} (Action on product manifolds) As in the
previous remark let $G=G_1 \times G_2$. Let $M_i$, $i=1, 2$, be
K\"ahlerian $G_i$ manifolds and $\LL_i$ pre-quantum line bundles
over $M_i$, satisfying the assumptions in the previous sections.
Denote by $\psi_i$ the stability function on $M_i$ associated to
$\LL_i$. Letting $G$ be the product $G_1 \times G_2$ the stability
function on the $G$-manifold $M_1 \times M_2$ associated with the
product line bundle $\pr_1^*\LL_1 \otimes \pr_2^*\LL_2$ is
$\pr_1^* \psi_1 + \pr_2^* \psi_2$.
\end{remark}

\subsection{Two useful lemmas}
\label{sec:3.2}
Recall that by (\ref{eqn:2.3}), the vector field $w_M$ for the
``imaginary vector" $w =\sqrt{-1}v \in \sqrt{-1}\fg$ is $w_M = J
v_M$.
\begin{lemma}[\cite{GuS82}]
\label{lemma:3.2}
Suppose $w = \sqrt{-1}v \in \sqrt{-1}\fg$, then $w_M$ is the
gradient vector field of $\langle \Phi, v \rangle$ with respect
to the K\"ahler metric $g$.
\end{lemma}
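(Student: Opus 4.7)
The plan is to unpack the three defining identities that are already in play: the moment map identity (\ref{eqn:2.1}), the compatibility relation (\ref{eqn:2.3}) between the $G_\CC$-action and the $G$-action, and the K\"ahler identity $g(X,Y)=\omega(X,JY)$ relating the Riemannian metric, the symplectic form, and the complex structure. The statement is essentially a formal consequence of these three identities, so the proof amounts to a direct computation.

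First, I would fix an arbitrary tangent vector $Y\in T_pM_{st}$ and compute $g(w_M, Y)$. Using $w_M = Jv_M$ from (\ref{eqn:2.3}), this becomes
\begin{equation*}
g(w_M, Y) \;=\; g(Jv_M, Y).
\end{equation*}
Next, I would invoke the K\"ahler identity together with the fact that $J$ is an isometry of $g$ (equivalently, that $\omega$ is $J$-invariant, $\omega(JX,JY)=\omega(X,Y)$). Because $J^2=-\mathrm{Id}$, one of the standard manipulations gives
\begin{equation*}
g(Jv_M, Y) \;=\; \omega(Jv_M, JY) \;=\; \omega(v_M, Y).
\end{equation*}
Finally, I would apply the moment map equation (\ref{eqn:2.1}) in the form $\iota_{v_M}\omega = d\langle \Phi, v\rangle$ to identify
\begin{equation*}
\omega(v_M, Y) \;=\; \bigl(d\langle \Phi, v\rangle\bigr)(Y).
\end{equation*}
Chaining these three equalities shows that $g(w_M, Y) = d\langle \Phi, v\rangle (Y)$ for every $Y$, which is exactly the defining property of $w_M$ as the $g$-gradient of $\langle \Phi, v\rangle$.

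The only thing to be careful about is sign conventions: the paper's choices are $\mathrm{curv}(\nabla)=-\omega$, $v_M$ generated by $\exp(-tv)$, and $d\langle\Phi,v\rangle=\iota_{v_M}\omega$, and one must check that the K\"ahler identity used here is consistent with them (in particular the sign in $g(X,Y)=\omega(X,JY)$ versus $\omega(JX,Y)$). I expect this bookkeeping to be the only subtle point; once the conventions are pinned down, the three-line computation above is the entire proof.
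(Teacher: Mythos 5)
Your proposal is correct and is essentially the paper's own proof: the paper's one-line computation $d\langle \Phi, v\rangle = \iota_{v_M}\omega = \omega(-Jw_M,\cdot) = \omega(\cdot, Jw_M) = g(w_M,\cdot)$ uses exactly the same three ingredients (the moment map identity, $w_M = Jv_M$, and the K\"ahler compatibility of $g$, $\omega$, $J$), just chained in the opposite order and with $v_M = -Jw_M$ in place of your explicit appeal to $J$-invariance of $\omega$. Your sign-convention caveat is handled the same way in the paper, so there is nothing missing.
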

\begin{proof}
   \begin{equation*}
   d\langle \Phi, v\rangle = \iota_{v_M}\omega = \omega(-Jw_M, \cdot)
   =\omega(\cdot, Jw_M) = g(w_M, \cdot).
   \end{equation*}
\end{proof}
\begin{lemma}
\label{lemma:3.3} Suppose $w=\sqrt{-1}v \in \sqrt{-1}\fg$, then
for any nonvanishing $G$-invariant holomorphic section $\tilde s
\in \Gamma_{hol}(\LL)^G$,
   \begin{equation}
   \label{eqn:3.4}
   L_{w_M}\log{\langle \tilde s, \tilde s\rangle} = -2 \langle
   \Phi, v \rangle.
   \end{equation}
\end{lemma}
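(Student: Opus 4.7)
The plan is to combine three ingredients: Kostant's formula (2.7), the hypothesis that $\tilde s$ is $G$-invariant, and the holomorphicity of $\tilde s$, together with the metric compatibility identity (2.5).

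First, I would apply Kostant's formula (2.7) to $\tilde s$. Since $\tilde s \in \Gamma_{hol}(\LL)^G$, the Lie derivative $L_v\tilde s$ vanishes, so (2.7) collapses to the purely algebraic identity
$$\nabla_{v_M}\tilde s = \sqrt{-1}\,\langle \Phi, v\rangle\,\tilde s.$$

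Next, I would convert this into an identity for $\nabla_{w_M}\tilde s$. Because $\tilde s$ is holomorphic with respect to the Chern connection, $\nabla^{0,1}\tilde s = 0$, which implies that for any real vector field $X$ one has $\nabla_{JX}\tilde s = \sqrt{-1}\,\nabla_X\tilde s$ (this is just the assertion that $\frac12(\nabla_X + \sqrt{-1}\,\nabla_{JX})\tilde s = 0$). Applied with $X=v_M$, this yields
$$\nabla_{w_M}\tilde s \;=\; \sqrt{-1}\,\nabla_{v_M}\tilde s \;=\; -\langle \Phi, v\rangle\,\tilde s,$$
so the ratio $\nabla_{w_M}\tilde s/\tilde s = -\langle\Phi,v\rangle$ is in particular real.

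Finally, since $\tilde s$ is nowhere vanishing, (2.5) reads $\partial \log\langle\tilde s,\tilde s\rangle = \nabla\tilde s/\tilde s$. The reality of $\log\langle\tilde s,\tilde s\rangle$ then forces $\bar\partial\log\langle\tilde s,\tilde s\rangle = \overline{\nabla\tilde s/\tilde s}$, and pairing the sum $d\log\langle\tilde s,\tilde s\rangle$ with the real vector field $w_M$ gives
$$L_{w_M}\log\langle\tilde s,\tilde s\rangle \;=\; 2\,\mathrm{Re}\!\left(\frac{\nabla_{w_M}\tilde s}{\tilde s}\right) \;=\; -2\,\langle \Phi, v\rangle,$$
as claimed.

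The argument has no serious obstacle; once Kostant's formula is in hand, the rest is formal manipulation. The only mildly delicate step is the identity $\nabla_{JX}\tilde s = \sqrt{-1}\,\nabla_X\tilde s$ for real $X$ and holomorphic $\tilde s$, which amounts to the fact that the Chern connection on a holomorphic section has vanishing $(0,1)$-part. I expect the paper's proof to be only a few lines.
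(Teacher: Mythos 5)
Your proposal is correct and follows essentially the same route as the paper: Kostant's formula plus $G$-invariance gives $\nabla_{v_M}\tilde s=\sqrt{-1}\langle\Phi,v\rangle\tilde s$, holomorphicity (the paper phrases your identity $\nabla_{JX}\tilde s=\sqrt{-1}\nabla_X\tilde s$ as the statement that $v_M+\sqrt{-1}w_M$ is of type $(0,1)$) converts this to $\nabla_{w_M}\tilde s=-\langle\Phi,v\rangle\tilde s$, and metric compatibility then yields \eqref{eqn:3.4}.
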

\begin{proof}
Since
   \begin{equation*}
   J(v_M + \sqrt{-1}w_M) = w_M - \sqrt{-1}v_M = -\sqrt{-1}(v_M +
   \sqrt{-1}w_M),
   \end{equation*}
$v_M + \sqrt{-1}w_M$ is a complex vector field of type (0,1).
Since $\tilde s$ is holomorphic, the covariant derivative
   \begin{equation}
   \label{eqn:3.5}
   \nabla_{v_M} \tilde s = -\sqrt{-1} \nabla_{w_M} \tilde s.
   \end{equation}
Since $\tilde s$ is $G$-invariant, by Kostant's identity
(\ref{eqn:2.7}),
   \begin{equation}
   \label{eqn:3.6}
   0=L_v \tilde s = \nabla_{v_M}\tilde s - \sqrt{-1}\langle \Phi,
                     v\rangle \tilde s.
   \end{equation}
Thus
   \begin{equation*}
   \nabla_{w_M} \tilde s = -\langle \Phi, v \rangle \tilde s.
   \end{equation*}
By metric compatibility, we have for any $G$-invariant holomorphic
section $\tilde s$
   \begin{equation*}
   L_{w_M}\log{\langle \tilde s, \tilde s \rangle} =
   -2 \langle \Phi, v\rangle.
   \end{equation*}
\end{proof}

In particular suppose $M$ is compact, let $\tilde s$ be a
$G$-invariant holomorphic section of $\LL$ and $p$ a point where
$\tilde s(p) \ne 0$. The function
   \begin{equation*}
   \langle \tilde s, \tilde s\rangle: \overline{G_\CC \cdot p} \to \RR
   \end{equation*}
takes its maximum at some point $q$ and since $\overline{G_\CC
\cdot p}$ is $G_\CC$-invariant and
   \begin{equation*}
   \langle \tilde s, \tilde s\rangle (q) \ge \langle \tilde s,
   \tilde s \rangle (p) > 0
   \end{equation*}
it follows from (\ref{eqn:3.4}) that $\Phi(q)=0$, i.e. $q \in
M_{st}$. But $M_{st}$ is open and $G_\CC$-invariant. Hence $p \in
M_{st}$. Thus we've proved that if $p \in M-M_{st}$, then $s(p)=0$
for all $s \in \Gamma_{hol}(\LL)^G$.

\subsection{Analytic properties of the stability function}
\label{sec:3.3}

\begin{proposition}
\label{prop:3.4}
Suppose $w = \sqrt{-1}v \in \sqrt{-1}\fg$, then
$L_{w_M}\psi = -2\langle \Phi, v \rangle$.
\end{proposition}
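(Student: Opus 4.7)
The plan is to unpack the defining equation of $\psi$ and hit each piece with $L_{w_M}$ using Lemma \ref{lemma:3.3}. Fix a point $p \in M_{st}$ and choose a non-vanishing local holomorphic section $s$ of $\LL_{red}$ defined near $\pi(p)$. On a neighborhood of $p$ we have
\[
\psi = \log\langle \pi^*s, \pi^*s\rangle \;-\; \pi^*\log\langle s, s\rangle_{red}.
\]
I would then treat the two terms separately.

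For the first term, the key observation is that $\pi^*s$ is a nowhere-vanishing holomorphic section of $\LL$ on $\pi^{-1}(U)$ which is $G_\CC$-invariant (since $s$ is a section of the GIT quotient line bundle $\LL_{red} = \LL_{st}/G_\CC$), and in particular $G$-invariant. Hence Lemma \ref{lemma:3.3} applies with $\tilde s = \pi^*s$, giving
\[
L_{w_M}\log\langle \pi^*s, \pi^*s\rangle = -2\langle \Phi, v\rangle.
\]

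For the second term, I would note that $w_M = Jv_M$ is tangent to the $G_\CC$-orbit through $p$, which is exactly the fiber of $\pi\colon M_{st} \to M_{red}$ through $p$. Thus $w_M$ is $\pi$-vertical, i.e.\ $d\pi(w_M) = 0$, so $L_{w_M}(\pi^*f) = \pi^*(L_{d\pi(w_M)}f) = 0$ for any smooth function $f$ on $M_{red}$. Applied to $f = \log\langle s, s\rangle_{red}$, this kills the second term.

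Combining the two contributions gives $L_{w_M}\psi = -2\langle \Phi, v\rangle$, as desired. There is no real obstacle here; the only subtle point is verifying that $\pi^*s$ is genuinely $G$-invariant and holomorphic so that Lemma \ref{lemma:3.3} is legitimately applicable, but both properties follow immediately from the identification $\LL_{red} = \LL_{st}/G_\CC$ and the holomorphicity of $\pi$ established in \S\ref{sec:2.2}.
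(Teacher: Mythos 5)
Your proposal is correct and follows essentially the same route as the paper: write $\psi$ via (\ref{eqn:3.2}), apply Lemma \ref{lemma:3.3} to the $G$-invariant holomorphic section $\pi^*s$, and kill the pulled-back term because $w_M$ is tangent to the $G_\CC$-orbits (the paper phrases this as $G_\CC$-invariance of $\pi^*\log\langle s,s\rangle_{red}$, which is the same observation).
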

\begin{proof}
Suppose $s$ is any holomorphic section of the reduced bundle
$\LL_{red}$. Since $\pi^*\log\langle s, s\rangle_{red}$ is
$G_\CC$-invariant, we have from (\ref{eqn:3.2}),
   \begin{equation*}
   L_{w_M}\psi  = L_{w_M}\log\langle \pi^* s, \pi^*
   s\rangle.
   \end{equation*}
Now apply lemma \ref{lemma:3.3} to the $G$-invariant section
$\pi^* s$.
\end{proof}

The main result of this section is
\begin{theorem}
\label{thm:3.5}
$\psi$ is a proper function which takes its
maximum value $0$ on $\Phi^{-1}(0)$. Moreover, for any $p \in
\Phi^{-1}(0)$, the restriction of $\psi$ to the orbit
$\exp{\sqrt{-1}\fg} \cdot p$ has only one critical point, namely
$p$ itself, and this critical point is a global maximum.
\end{theorem}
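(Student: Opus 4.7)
Plan: I will reduce the statement to the behavior of $\psi$ along the one-parameter-subgroup curves $\gamma_v(t)=\exp{(t\sqrt{-1}v)}\cdot p$, combining the two derivative facts just established (Lemma~\ref{lemma:3.2} and Proposition~\ref{prop:3.4}). The identity $\psi\equiv 0$ on $\Phi^{-1}(0)$ itself is immediate from the definition (\ref{eqn:3.1}) together with the defining property (\ref{eqn:2.8}) of the quotient metric, since on $\Phi^{-1}(0)$ the map $\pi$ restricts to the quotient map $\pi_0$.

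Fix $p\in\Phi^{-1}(0)$ and $0\ne v\in\fg$, set $w=\sqrt{-1}v$, and put $h(t)=\psi(\gamma_v(t))$. Since $\gamma_v$ lies entirely in $\exp{(\sqrt{-1}\fg)}\cdot p$, $h$ records $\psi|_{\exp{(\sqrt{-1}\fg)}\cdot p}$ along this curve. Proposition~\ref{prop:3.4} gives
\begin{equation*}
h'(t)=L_{w_M}\psi|_{\gamma_v(t)}=-2\langle\Phi(\gamma_v(t)),v\rangle,
\end{equation*}
and differentiating again, using that Lemma~\ref{lemma:3.2} identifies $w_M$ as the K\"ahler gradient of $\langle\Phi,v\rangle$,
\begin{equation*}
h''(t)=-2\,L_{w_M}\langle\Phi,v\rangle|_{\gamma_v(t)}=-2\,|w_M(\gamma_v(t))|^2.
\end{equation*}
Since $G_\CC$ acts freely on $M_{st}$, $w_M$ is nowhere-vanishing on $M_{st}$ for $v\ne 0$, so $h''<0$ and $h$ is strictly concave. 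Combined with $h(0)=0$ and $h'(0)=-2\langle\Phi(p),v\rangle=0$, this forces $h(t)<0$ and $h'(t)\ne 0$ for every $t\ne 0$. Applied at $t=1$ to an arbitrary point $q=\exp{(\sqrt{-1}v)}\cdot p\ne p$ on the orbit: $\gamma_v'(1)\in T_q(\exp{(\sqrt{-1}\fg)}\cdot p)$, and the directional derivative $d\psi_q(\gamma_v'(1))=h'(1)\ne 0$, so $q$ is not a critical point of $\psi|_{\exp{(\sqrt{-1}\fg)}\cdot p}$, and moreover $\psi(q)=h(1)<0=\psi(p)$. This simultaneously yields uniqueness of the critical point on the orbit and its strict maximum property.

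The global maximum statement on all of $M_{st}$ follows by combining the previous step with the Cartan (polar) decomposition $G_\CC=G\cdot\exp{(\sqrt{-1}\fg)}$: every $q\in M_{st}$ is uniquely expressible as $q=k\,\exp{(\sqrt{-1}v)}\cdot p'$ with $p'\in\Phi^{-1}(0)$, $k\in G$, $v\in\fg$, because each $G_\CC$-orbit meets $\Phi^{-1}(0)$ in a single $G$-orbit (\S\ref{sec:2.2}) and $G_\CC$ acts freely on $M_{st}$. Then $G$-invariance of $\psi$ gives $\psi(q)=\psi(\exp{(\sqrt{-1}v)}\cdot p')\le 0$, with equality iff $v=0$, so $\Phi^{-1}(0)$ is exactly the locus where $\psi$ attains its maximum value $0$.

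Properness is the main obstacle. The route I would take uses (\ref{eqn:3.1}): a locally nonvanishing holomorphic section $s$ of $\LL_{red}$ yields a $G$-invariant holomorphic section $\pi^*s$ of $\LL$ on $M_{st}$. As noted at the end of \S\ref{sec:3.2}, any $G$-invariant holomorphic section of $\LL$ vanishes on the unstable locus $M\setminus M_{st}$, so as $q\to\partial M_{st}$ inside $M$ the quantity $\langle\pi^*s,\pi^*s\rangle$ tends to $0$ while $\pi^*\log\langle s,s\rangle_{red}$ stays bounded on compact subsets of $\{s\ne 0\}\subset M_{red}$ (using compactness of the base); hence $\psi\to-\infty$ at the boundary, forcing $\psi^{-1}([-T,0])$ to be compact for every $T>0$. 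The delicate step is patching these local bounds into a uniform one and invoking the standard GIT extension and vanishing properties of $G$-invariant holomorphic sections across the unstable stratum.
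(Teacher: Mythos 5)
Your treatment of the critical-point and maximum statements is correct and is essentially the paper's own argument: writing $h(t)=\psi(\exp(t\sqrt{-1}v)\cdot p)$, Proposition \ref{prop:3.4} gives $h'(t)=-2\langle\Phi,v\rangle$ along the curve, and Lemma \ref{lemma:3.2} (the curve is a gradient trajectory of $\langle\Phi,v\rangle$) gives strict monotonicity of $\langle\Phi,v\rangle$ along it, which is your $h''<0$; the paper packages the same computation as the ODE (\ref{eqn:3.10}) together with the strict increase of $\phi_0$, and the passage to all of $M_{st}$ via the Cartan decomposition and $G$-invariance is likewise the same.

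The properness argument is where you depart from the paper, and it has a genuine gap. First, it is circular as written: the assertion that $\langle\pi^*s,\pi^*s\rangle\to 0$ as one approaches $\partial M_{st}$ is exactly Corollary \ref{cor:3.7}, which the paper \emph{deduces} from Theorem \ref{thm:3.5}; the independent argument at the end of \S\ref{sec:3.2} applies only to global $G$-invariant holomorphic sections of $\LL$ on a compact $M$, not to $\pi^*s$ for a local nonvanishing section $s$ of $\LL_{red}$, and the extension of $\pi^*s$ by zero across $M\setminus M_{st}$ in \S\ref{sec:3.4} again presupposes that corollary. Second, even granting boundary decay, properness of $\psi$ on $M_{st}$ is not only about approach to $\partial M_{st}$ inside $M$: when $M$ is noncompact --- and the paper's principal examples have $M=\CC^d$ --- a sequence can leave every compact subset of $M_{st}$ by escaping to infinity in $M$, which your argument does not address (for $\CC\PP^{n-1}$ one has $\psi(z)=-|z|^2+1+\log|z|^2$ on $\CC^n\setminus\{0\}$, and properness requires $\psi\to-\infty$ both as $z\to 0$ and as $|z|\to\infty$). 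There is also the secondary issue that $\pi^*\log\langle s,s\rangle_{red}$ is only controlled if $\pi(q)$ stays in a fixed compact subset of the nonvanishing locus of $s$, which you cannot assume along an arbitrary boundary-approaching sequence. The repair is already contained in your own computation: since $h'$ is strictly decreasing and $h'(0)=0$, for any $t_0>0$ one has $h'(t)\le h'(t_0)<0$ for $t\ge t_0$, so $\psi$ decays at least linearly along each ray $t\mapsto\exp(t\sqrt{-1}v)\cdot p$ with $|v|=1$; because $\Phi$ is proper, $\Phi^{-1}(0)\times\{|v|=1\}$ is compact and the constants can be taken uniform, and via the diffeomorphism (\ref{eqn:3.7}) the sublevel set $\{\psi\ge -T\}$ is then contained in the image of $\Phi^{-1}(0)\times\{|w|\le R(T)\}$, which is compact. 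This intrinsic estimate is precisely the paper's properness argument, needs no holomorphic sections, and works for noncompact $M$.
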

\begin{proof}
As before we take $w=\sqrt{-1}v \in \sqrt{-1}\fg$. Since $G_\CC$
acts freely on $M_{st}$, we have a diffeomorphism
   \begin{equation}
   \label{eqn:3.7}
   \kappa: \Phi^{-1}(0) \times \sqrt{-1}\fg \to M_{st}, \; (p, w)
   \mapsto \tau_\CC(\exp{w})p.
   \end{equation}
We define two functions
   \begin{equation}
   \label{eqn:3.8}
   \psi_0(p,w,t)=(\kappa^*\psi)(p,tw)
   \end{equation}
and
   \begin{equation}
   \label{eqn:3.9}
   \phi_0(p,w,t)=\langle \kappa^*\Phi(p,tw), v \rangle.
   \end{equation}
Then proposition \ref{prop:3.4} leads to the following
differential equation
   \begin{equation}
   \label{eqn:3.10}
   \frac d{dt}\psi_0 = -2\phi_0,
   \end{equation}
with initial conditions
   \begin{equation}
   \label{eqn:3.11}
   \psi_0(p,w,0)=0
   \end{equation}
and
   \begin{equation}
   \label{eqn:3.12}
   \phi_0(p,w,0)=0.
   \end{equation}

Since $w_M$ is the gradient vector field of $\langle \Phi,
v\rangle$, and $t \mapsto \kappa(p,tw)$ is an integral curve of
$w_M$, we see that $\phi_0$ is a strictly increasing function of
$t$. Thus $\psi_0$ is strictly increasing for $t<0$, strictly
decreasing for $t>0$, and takes its maximal value $0$ at $t=0$.
This shows that $p$ is the only critical point in the orbit
$\sqrt{-1}\fg \cdot p$.

The fact $\psi$ is proper also follows from the differential
equation (\ref{eqn:3.10}), since for any $t_0 > 0$ we have
   \begin{equation*}
   \psi_0(p,w,t) \le C_0 - 2(t-t_0) C_1,\qquad t>t_0
   \end{equation*}
where
   \begin{equation*}
   C_0 = \max_{|w|=1}\psi_0(p,w,t_0)<0
   \end{equation*}
and
   \begin{equation*}
   C_1 = \min_{|w|=1}\phi_0(p,w,t_0)>0.
   \end{equation*}
\end{proof}
\begin{remark}
\label{rem:3.6}
The proof above also gives us an alternate way to
compute the stability function, namely we ``only" need to solve the
differential equation (\ref{eqn:3.10}) along each orbit
$\exp({\sqrt{-1}\fg) \cdot p}$ with initial condition
(\ref{eqn:3.11}). (Of course a much more complicated step is to
write down explicitly the decomposition of $M_{st}$ as a product
$\Phi^{-1}(0) \times \sqrt{-1}\fg$.)
\end{remark}
\begin{corollary}
\label{cor:3.7}
For any $s \in \Gamma_{hol}(\LL_{red})$, the norm
$\langle \pi^* s, \pi^* s\rangle(p)$ is bounded on $M_{st}$, and
tends to $0$ as $p$ goes to the boundary of $M_{st}$.
\end{corollary}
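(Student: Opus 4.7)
The plan is to read off both assertions directly from the defining identity (\ref{eqn:3.1}) together with Theorem \ref{thm:3.5}. Rewrite the identity as
\begin{equation*}
\langle \pi^*s, \pi^*s\rangle(p) \;=\; e^{\psi(p)}\; \bigl(\pi^*\langle s,s\rangle_{red}\bigr)(p),\qquad p\in M_{st}.
\end{equation*}
The factor $\pi^*\langle s,s\rangle_{red}$ is the pull-back of a continuous function on the compact manifold $M_{red}$, hence it is globally bounded by some constant $C_s\geq 0$ on $M_{st}$. By Theorem \ref{thm:3.5}, $\psi\leq 0$ everywhere on $M_{st}$, so $e^{\psi}\leq 1$. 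Multiplying, we obtain $\langle \pi^*s,\pi^*s\rangle(p)\leq C_s$ for all $p\in M_{st}$, which proves the first assertion.

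For the second assertion, what has to be shown is that $e^{\psi(p)}\to 0$ as $p$ approaches the boundary $\partial M_{st}$ inside $M$, since then the product with the bounded factor also goes to zero. By ``$p$ goes to the boundary of $M_{st}$'' we mean that $p$ eventually leaves every compact subset of $M_{st}$. But Theorem \ref{thm:3.5} asserts that $\psi$ is proper on $M_{st}$, so the sublevel sets $\{\psi\geq -N\}=\psi^{-1}([-N,0])$ are compact in $M_{st}$ for every $N>0$. Consequently, once $p$ has left the compact set $\{\psi\geq -N\}$ one has $\psi(p)<-N$. Letting $N\to\infty$ shows $\psi(p)\to -\infty$, hence $e^{\psi(p)}\to 0$, and therefore $\langle \pi^*s,\pi^*s\rangle(p)\to 0$.

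There is no real obstacle here; the work has already been done in Theorem \ref{thm:3.5}. The only point that deserves a moment's care is the interpretation of ``boundary of $M_{st}$'': we take it to mean leaving every compact subset of $M_{st}$, which is precisely what makes properness of $\psi$ the right tool. Both conclusions are then immediate consequences of $\psi\leq 0$ and of the properness statement in Theorem \ref{thm:3.5}.
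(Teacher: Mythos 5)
Your argument is correct and is exactly the (implicit) argument the paper intends: boundedness follows from $\psi\le 0$ in Theorem \ref{thm:3.5} together with compactness of $M_{red}$, and the decay at the boundary follows from the properness of $\psi$, which forces $\psi\to-\infty$ off compact subsets of $M_{st}$. Nothing further is needed.
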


\subsection{Quantization commutes with reduction}
\label{sec:3.4}
As we have mentioned in the introduction, the properties of the
stability function described above were implicitly involved in the
proof of the ``quantization commutes with reduction" theorem in
\cite{GuS82}. We end this section by briefly describing this
proof. Assume $M$ compact. Then using elliptic operator techniques
one can prove that there exists a non-vanishing $G_\CC$-invariant
holomorphic section $\tilde s$ of $\LL^k$ for $k$ large. But
$M_{st}$ contains all points $p$ with $\tilde s(p) \ne 0$. So the
complement of $M_{st}$ is contained in a codimension one complex
subvariety of $M$. By the corollary above, we see that for any
holomorphic section $s$ of $\LL_{red}$, $\pi^* s$ can be extended
to a holomorphic section of $\LL$ by setting $\pi^* s = 0$ on $M -
M_{st}$. This gives the required bijection. For details, c.f.
\cite{GuS82}.


\section{Asymptotic properties of the stability function}
\label{sec:4}

\subsection{The basic asymptotics}
\label{sec:4.1}
>From the previous section we have seen that the stability function
takes its global maximum $0$ exactly at $\Phi^{-1}(0)$. Thus as
$\lambda$ tends to infinity, $e^{\lambda \psi}$ tends to 0
exponentially fast off $\Phi^{-1}(0)$. So in principle, only a
very small neighborhood of $\Phi^{-1}(0)$ will contribute to the
asymptotics of the integral
   \begin{equation}
   \label{eqn:AIdisplay}
   \int_{M_{st}} f e^{\lambda \psi} \frac{\omega^d}{d!}
   \end{equation}
for $f$ a bounded function in $C^\infty(M_{st})^G$ and for
$\lambda$ large. In this section we will derive an asymptotic
expansion in $\lambda$ for this integral, beginning with
(\ref{eqn:1.5}). We first note that, for proving asymptotic formulas in $\lambda$ for the integral (\ref{eqn:AIdisplay}), one can without loss of generality assume that $f$ in (\ref{eqn:AIdisplay}) is of compact support. This is because, if $f e^{\lambda \psi} \in L^1(M_{st}, \omega^d/d! )$, for $\lambda \geq \lambda_0$, then
\begin{equation}
   \label{eqn:decompAI}
	 \int_{M_{st}} f e^{\lambda \psi} dx = \int_{\{\psi \geq c\}} f e^{\lambda \psi} dx  +  \int_{\{\psi \leq c\}} f e^{\lambda \psi} dx
\end{equation}
where we have abbreviated $\omega^d/d! = dx$, the Riemannian volume form on $M_{st}$, and $c < 0$ is any constant. The first integral on the right in (\ref{eqn:decompAI}) is compactly supported, while the second integral is readily bounded:
\begin{equation}
\label{eqn:expsmall}
\int_{\{\psi \leq c\}} f e^{\lambda \psi} dx \leq C \, e^{-c \lambda}.
\end{equation}

The proof of (\ref{eqn:1.5}) is based on the following method of
steepest descent: Let $X$ be an $m$-dimensional Riemannian
manifold with volume form $dx$, $\psi: X \to \RR$ a real-valued
smooth function which has a unique maximum $\psi(p)=0$ at a point
$p$, and is bounded away from zero outside a compact set. Suppose
moreover that $p$ is a nondegenerate critical point of $\psi$. If
$f \in C^\infty(X)$ is compactly supported, then
   \begin{equation}
   \label{eqn:4.1}
   \int_X f(x)e^{\lambda \psi(x)} dx \sim
   \sum_{k=0}^\infty c_k \lambda^{-\frac m2 -k}, \indent \mbox{as\
   }\lambda \to \infty
   \end{equation}
where the $c_k$'s are constants. Moreover,
   \begin{equation}
   \label{eqn:4.2}
   c_0 = (2\pi)^{m/2} \tau_p f(p),
   \end{equation}
where
   \begin{equation}
   \label{eqn:4.3}
   \tau_p^{-1} = \frac{(\det d^2\psi_p(e_i, e_j))^{1/2}}
   {|dx_p(e_1, \cdots, e_n)|}
   \end{equation}
for any basis $e_1, \cdots, e_m$ of $T_pM$.

>From this general result we obtain:
\begin{theorem}
\label{thm:4.1} Let $dx$ be the Riemannian volume form on
$\exp{(\sqrt{-1}\fg)}\cdot p$ induced by the restriction to
$\exp{(\sqrt{-1}\fg)}\cdot p$ of the K\"ahler-Riemannian metric on
$M_{st}$, where $p$ is any point in $\Phi^{-1}(0)$. Let $f$ be a
smooth function on $M$, compactly supported in $M_{st}$. Then for $\lambda$ large,
   \begin{equation}
   \label{eqn:4.4}
   \int_{\exp{\sqrt{-1}\fg}\cdot p}f(x) e^{\lambda \psi(x)} dx
   \sim \left(\frac{\lambda}{\pi}\right)^{-m/2} \left(f(p) +
   \sum_{i=1}^\infty c_i \lambda^{-i}\right),
   \end{equation}
where $c_i$ are constants depending on $f, \psi$ and $p$.
\end{theorem}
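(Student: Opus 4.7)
The plan is to apply the steepest-descent formula (\ref{eqn:4.1})--(\ref{eqn:4.3}) to $X := \exp(\sqrt{-1}\fg)\cdot p$, equipped with its induced Riemannian metric and volume form $dx$, and to the function $\psi|_X$. Three things must be done: verify the hypotheses on $(X,\psi|_X,f|_X)$, compute the Hessian $d^2\psi_p|_X$, and evaluate the leading coefficient.

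First, for the setup: by (\ref{eqn:3.7}) the map $w \mapsto \kappa(p,w) = \exp(w)\cdot p$ is a diffeomorphism from $\sqrt{-1}\fg$ onto $X$, so $X$ is a closed embedded $m$-dimensional submanifold of $M_{st}$ and $f|_X$ is compactly supported. By Theorem \ref{thm:3.5}, $p$ is the unique critical point of $\psi|_X$ with $\psi(p)=0$, and the explicit bound
\begin{equation*}
\psi_0(p,w,t) \le C_0 - 2(t-t_0)C_1
\end{equation*}
derived in the proof of Theorem \ref{thm:3.5} shows $\psi|_X\to-\infty$ at infinity, so $\psi|_X$ is bounded away from $0$ outside a compact set.

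Next, for the Hessian: take $w=\sqrt{-1}v$ and $w'=\sqrt{-1}v'$ in $\sqrt{-1}\fg$. Proposition \ref{prop:3.4} gives $L_{w_M}\psi = -2\langle \Phi,v\rangle$. Differentiating once more and applying Lemma \ref{lemma:3.2} yields
\begin{equation*}
L_{w'_M} L_{w_M}\psi \;=\; -2\, d\langle \Phi,v\rangle(w'_M) \;=\; -2\, g(w_M, w'_M).
\end{equation*}
Since $p$ is a critical point of $\psi|_X$ and $w_M(p),w'_M(p)$ span $T_pX$, this second Lie derivative equals the Hessian $d^2\psi_p(w_M(p),w'_M(p))$ of $\psi|_X$. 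Hence the Hessian at $p$ is $-2$ times the Riemannian metric on $T_pX$; in particular $p$ is a nondegenerate maximum, and in any $g$-orthonormal basis $e_1,\dots,e_m$ of $T_pX$ one has $\det\bigl(-d^2\psi_p(e_i,e_j)\bigr) = 2^m$ and $|dx_p(e_1,\dots,e_m)|=1$.

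Substituting into (\ref{eqn:4.2})--(\ref{eqn:4.3}) with the sign convention appropriate to a maximum gives $\tau_p = 2^{-m/2}$, so
\begin{equation*}
c_0 = (2\pi)^{m/2}\tau_p f(p) = \pi^{m/2} f(p),
\end{equation*}
which matches the leading term $(\lambda/\pi)^{-m/2}f(p)$ in (\ref{eqn:4.4}); the subleading coefficients $c_i$ come directly from the higher-order terms already built into (\ref{eqn:4.1}). The only delicate point is checking that the iterated Lie derivative computed above really produces the symmetric bilinear form entering (\ref{eqn:4.3}); this is automatic at a critical point, where Lie, covariant, and second-order Taylor derivatives of a scalar all agree, so no serious obstacle remains.
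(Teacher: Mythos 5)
Your proposal is correct and follows essentially the same route as the paper: apply the steepest-descent formula (\ref{eqn:4.1})--(\ref{eqn:4.3}) on the orbit $\exp(\sqrt{-1}\fg)\cdot p$, compute the Hessian at $p$ from Proposition \ref{prop:3.4} together with Lemma \ref{lemma:3.2} to get $-2g_p$ on $T_pX$, and conclude $\tau_p=2^{-m/2}$, giving the leading term $(\lambda/\pi)^{-m/2}f(p)$. The only difference is that you spell out the hypotheses (uniqueness of the maximum and decay of $\psi$ along the orbit via Theorem \ref{thm:3.5}, and the identification of the iterated Lie derivative with the Hessian at the critical point), which the paper leaves implicit.
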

\begin{proof}
We need to compute the Hessian of $\psi$ restricted to
$\exp{(\sqrt{-1}\fg)}\cdot p$ at the point $p$. By proposition
\ref{prop:3.4},
   \begin{equation*}
   d(d\psi(w_M)) = d(L_{w_M}\psi) = -2d\langle \Phi, v\rangle = -2
   \omega(v_M, \cdot),
   \end{equation*}
so
   \begin{equation*}
   d^2\psi_p(w_M, w'_M) = -2\omega_p(v_M, w'_M) = -2g_p(v_M, v'_M)
   = -2 g_p(w_M, w'_M).
   \end{equation*}
This implies
$\tau_p = 2^{-m/2}$.
\end{proof}

\subsection{Asymptotics on submanifolds of $M_{st}$}
\label{sec:4.2}
>From (\ref{eqn:4.4}) we obtain asymptotic formulas similar to
(\ref{eqn:4.4}) for submanifolds of $M_{st}$ which are foliated by
the sets $\exp{(\sqrt{-1}\fg)} \cdot p$. For example, by the
Cartan decomposition
   \begin{equation*}
   G_\CC = G \times \exp{(\sqrt{-1}\fg)}
   \end{equation*}
one gets a splitting
   \begin{equation*}
   G_\CC \cdot p = G \times \exp{(\sqrt{-1}\fg)}\cdot p.
   \end{equation*}
Moreover, this is an orthogonal splitting on $\Phi^{-1}(0)$.
Thus if we write
   \begin{equation*}
   \frac{\omega^m}{m!}(x) = g(x)d\nu \wedge dx,
   \end{equation*}
where $d\nu$ is the Riemannian volume form on the $G$-orbit $G
\cdot p$, defined by the K\"ahler-Riemannian metric, we see that
$g(x)$ is $G$-invariant and $g(p)=1$ on $\Phi^{-1}(0)$. Thus if we
apply theorem \ref{thm:4.1} to a $G$-invariant $f$ we get
\begin{corollary}
\label{cor:4.2}
As $\lambda \to \infty$,
   \begin{equation}
   \label{eqn:4.5}
   \int_{G_{\CC} \cdot p} f(x)e^{\lambda \psi} \frac{\omega^m}{m!}
   \sim V(p) \left(\frac{\lambda}{\pi}\right)^{-m/2}
   \left(f(p) +\sum_{i=1}^\infty c_i(p)\lambda^{-i}\right)\ ,
   \end{equation}
where $V(p)=\Vol(G \cdot p)$ is the Riemannian volume of the $G$
orbit through $p$.
\end{corollary}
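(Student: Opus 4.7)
The strategy is to reduce to Theorem~\ref{thm:4.1} by fiber-integration over the Cartan-type decomposition $G_\CC \cdot p = G \cdot (\exp(\sqrt{-1}\fg) \cdot p)$ displayed in the paragraph preceding the corollary. Using the factorization $\omega^m/m! = g(x)\, d\nu \wedge dx$ noted there, with $d\nu$ the Riemannian volume on the $G$-orbit through $x$ and $dx$ the Riemannian volume on the imaginary orbit through $x$, the $G$-invariance of the decomposition makes $g$ a smooth $G$-invariant function, and the orthogonality of the splitting on $\Phi^{-1}(0)$ forces $g(q) = 1$ for every $q \in G \cdot p$.

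I would then rewrite the integral as the iterated integral
\begin{equation*}
\int_{G_\CC \cdot p} f\, e^{\lambda \psi}\, \frac{\omega^m}{m!}
= \int_{G \cdot p} d\nu(q) \int_{\exp(\sqrt{-1}\fg) \cdot q} f(x)\, g(x)\, e^{\lambda \psi(x)}\, dx
\end{equation*}
and apply Theorem~\ref{thm:4.1} to the inner integral for each fixed $q \in G \cdot p$, with test function $f \cdot g$ and unique critical point $q \in \Phi^{-1}(0)$. Since $g(q) = 1$ and $f(q) = f(p)$ by $G$-invariance, the leading coefficient of the inner expansion is $(\lambda/\pi)^{-m/2} f(p)$. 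More generally, since $G$ acts by K\"ahler isometries preserving $\psi$, $f$, and $g$, and carries the slice $\exp(\sqrt{-1}\fg) \cdot p$ onto $\exp(\sqrt{-1}\fg) \cdot q$, the entire expansion at $q$ is the $G$-translate of the expansion at $p$; in particular every coefficient is independent of $q \in G \cdot p$, so all of them factor through the $d\nu$-integration, yielding the prefactor $\int_{G \cdot p} d\nu = V(p)$ and the expansion (\ref{eqn:4.5}).

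The principal technical point is the \emph{uniformity in $q$} of the asymptotic supplied by Theorem~\ref{thm:4.1}, so that the pointwise expansion may be integrated termwise against $d\nu$. This uniformity follows from compactness of $G \cdot p$ together with the $G$-equivariance of all the data entering the steepest-descent estimate: the Hessian of $\psi$ transverse to $\Phi^{-1}(0)$, the Taylor coefficients of $fg$, and the cutoffs localizing near the critical point all transport equivariantly, yielding a single remainder estimate valid uniformly in $q$. Since $f$ is compactly supported in $M_{st}$ and $\psi$ is proper and strictly negative off $\Phi^{-1}(0)$, the contribution from outside any fixed neighborhood of $G \cdot p$ is exponentially small in $\lambda$ and absorbs into the $O(\lambda^{-\infty})$ remainder, as in (\ref{eqn:expsmall}).
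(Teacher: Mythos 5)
Your proposal is correct and takes essentially the same route as the paper: the Cartan splitting $G_\CC\cdot p = G\times\exp(\sqrt{-1}\fg)\cdot p$, the factorization $\frac{\omega^m}{m!}=g(x)\,d\nu\wedge dx$ with $g$ being $G$-invariant and equal to $1$ on $\Phi^{-1}(0)$, and a fiberwise application of Theorem~\ref{thm:4.1} followed by integration over $G\cdot p$ (the paper, like you, takes $f$ to be $G$-invariant at this step). The paper leaves the uniformity in $q$ and the termwise integration against $d\nu$ implicit; your equivariance/compactness argument merely makes these points explicit.
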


Similarly the diffeomorphism (\ref{eqn:3.7}) gives a splitting of
$M_{st}$ into the imaginary orbits $\exp{(\sqrt{-1}\fg)} \cdot p$,
and by the same argument one gets
\begin{corollary}
\label{cor:4.3}
If $e^{\lambda \psi} \in L^1(M_{st}, dx)$, then as $\lambda \to \infty$,
   \begin{equation}
   \label{eqn:4.6}
   \int_{M_{st}} e^{\lambda \psi} dx
   \sim \Vol(\Phi^{-1}(0)) \left(\frac
   {\lambda}{\pi}\right)^{-m/2}  \left(1+\sum_{i=1}^\infty C_i
   \lambda^{-i}\right)\ .
   \end{equation}
\end{corollary}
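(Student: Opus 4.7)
The plan is to use the diffeomorphism $\kappa: \Phi^{-1}(0) \times \sqrt{-1}\fg \to M_{st}$ from (\ref{eqn:3.7}) to foliate $M_{st}$ by the imaginary orbits $\exp(\sqrt{-1}\fg) \cdot p$ parametrized by $p \in \Phi^{-1}(0)$, apply Theorem \ref{thm:4.1} fiberwise, and then integrate the leading coefficient $1$ over $\Phi^{-1}(0)$ to produce $\Vol(\Phi^{-1}(0))$. Since $\Phi$ is proper, $\Phi^{-1}(0)$ is compact, so the integration over the base is well-defined and interchanges with the asymptotic expansion.

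First I would reduce to a compactly supported integrand. Fix any $c < 0$ and split
\begin{equation*}
\int_{M_{st}} e^{\lambda \psi} dx = \int_{\{\psi \geq c\}} e^{\lambda \psi} dx + \int_{\{\psi \leq c\}} e^{\lambda \psi} dx.
\end{equation*}
Because $\psi$ is proper and $\psi \leq 0$ with equality only on the compact set $\Phi^{-1}(0)$ (Theorem \ref{thm:3.5}), the sublevel set $\{\psi \geq c\}$ is a compact neighborhood of $\Phi^{-1}(0)$. For the second piece, writing $e^{\lambda\psi} = e^{(\lambda-1)\psi} e^{\psi}$ and using $\psi \leq c$ bounds it by $e^{(\lambda-1)c} \int_{M_{st}} e^{\psi} dx$, which is $O(\lambda^{-\infty})$ by the $L^1$ hypothesis and may be absorbed into the remainder.

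Second I would analyze the compact piece by pulling back via $\kappa$. Using the splitting
\begin{equation*}
T_p M_{st} = T_p \Phi^{-1}(0) \oplus (\sqrt{-1}\fg)_p, \qquad p \in \Phi^{-1}(0),
\end{equation*}
which is orthogonal on $\Phi^{-1}(0)$ by the Cartan decomposition argument preceding Corollary \ref{cor:4.2}, I write
\begin{equation*}
\kappa^* dx = J(p,w)\, d\sigma(p) \wedge dw_p,
\end{equation*}
where $d\sigma$ is the Riemannian volume form on $\Phi^{-1}(0)$, $dw_p$ is the K\"ahler volume form on the fiber $\exp(\sqrt{-1}\fg)\cdot p$, and $J(p,0) = 1$. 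A cutoff function $\chi$ supported in $\{\psi \geq c\}$ and equal to $1$ on $\{\psi \geq c/2\}$ reduces the fiber integrand to compact support, and the error $\int_{\{c \leq \psi \leq c/2\}} e^{\lambda \psi} dx = O(e^{\lambda c/2}) = O(\lambda^{-\infty})$. For each fixed $p$ Theorem \ref{thm:4.1}, applied with $f(w) = \chi\, J(p,w)$, yields
\begin{equation*}
\int_{\exp(\sqrt{-1}\fg)\cdot p} \chi\, J(p,w)\, e^{\lambda \psi}\, dw_p \sim \left(\frac{\lambda}{\pi}\right)^{-m/2}\!\left(1 + \sum_{i=1}^\infty c_i(p)\lambda^{-i}\right),
\end{equation*}
the leading coefficient being $J(p,0) = 1$. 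Integrating $d\sigma$ over $\Phi^{-1}(0)$ by Fubini produces the stated expansion with leading term $\Vol(\Phi^{-1}(0))(\lambda/\pi)^{-m/2}$ and $C_i = \Vol(\Phi^{-1}(0))^{-1}\int_{\Phi^{-1}(0)} c_i(p)\, d\sigma(p)$.

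The main obstacle is justifying that the steepest-descent expansion of Theorem \ref{thm:4.1} is uniform in the base-point $p$, so that Fubini genuinely commutes with the asymptotic expansion. This is where compactness of $\Phi^{-1}(0)$ is essential: the Hessian of $\psi|_{\exp(\sqrt{-1}\fg)\cdot p}$ at $p$ was computed in the proof of Theorem \ref{thm:4.1} to equal $-2$ times the K\"ahler metric restricted to $(\sqrt{-1}\fg)_p$, which depends smoothly on $p$ and is uniformly non-degenerate on the compact set $\Phi^{-1}(0)$. The standard Morse-lemma / Laplace-method remainder bounds then depend only on finitely many derivatives of $\psi$ and $J$ on the compact neighborhood $\{\psi \geq c\}$, giving uniform control on the $c_i(p)$ and on the error terms.
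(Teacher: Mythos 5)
Your proposal is correct and follows essentially the same route as the paper: the paper's own argument is precisely to use the diffeomorphism (\ref{eqn:3.7}) to split $M_{st}$ into the imaginary orbits $\exp(\sqrt{-1}\fg)\cdot p$ with a Jacobian factor equal to $1$ along $\Phi^{-1}(0)$, apply Theorem \ref{thm:4.1} fiberwise, and integrate over $\Phi^{-1}(0)$, with the reduction to compactly supported integrands via the $L^1$ hypothesis handled exactly as in (\ref{eqn:decompAI})--(\ref{eqn:expsmall}). Your added remarks on uniformity of the fiberwise expansion over the compact set $\Phi^{-1}(0)$ simply make explicit what the paper leaves implicit.
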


\subsection{The half form correction}
\label{sec:4.3}
Now we apply corollary \ref{cor:4.2} to prove (\ref{eqn:1.7}).
Since $M_{red} = M_{st}/G_\CC$, we have a decomposition of the
volume form
   \begin{equation}
   \label{eqn:4.7}
   \frac{\omega^d}{d!}  = \pi^*\frac{\omega_{red}^{n}}{n!}
   \wedge d\mu_\pi,
   \end{equation}
where
   \begin{equation*}
   d\mu_\pi(x) =h(x) \frac{\omega^m}{m!},
   \end{equation*}
with $h(p)=1$ on $\Phi^{-1}(0)$. Now suppose $s_k \in
\Gamma_{hol}(\LL_{red}^k)$, and again, that $e^{\lambda \psi} \in L^1(M_{st}, dx)$. Since the stability function of
$\LL_{red}^k$ is $k\psi$,  (\ref{eqn:1.4}) becomes
   \begin{equation*}
   \langle \pi^*s_k, \pi^*s_k \rangle = e^{k\psi} \pi^* \langle s_k,
   s_k \rangle_{red}.
   \end{equation*}
By (\ref{eqn:4.5}),
   \begin{equation*}
   \aligned
   \|\pi^*s_k\|^2 &= \int_{M_{red}} \left(\int_{G_\CC \cdot p}
      e^{k\psi} d\mu_\pi \right)
      \langle s_k, s_k\rangle_{red} \frac{\omega_{red}^{n}}{n!} \\
   & = \left(\frac k{\pi}\right)^{-m/2} \left(1+O(k^{-1})\right)
      \int_{M_{red}} V(\pi_0^{-1}(q)) \langle s_k, s_k\rangle_{red}
      \frac{\omega_{red}^{n}}{n!}.
   \endaligned
   \end{equation*}
In other words,
   \begin{equation}
   \label{eqn:4.8}
   \left(\frac k{\pi}\right)^{m/2} \|\pi^* s_k\|^2 =
   \|V^{1/2}s_k\|_{red}^2 + O(\frac 1k),
   \end{equation}
where $V$ is the volume function $V(q) = V(\pi^{-1}_0(q))$.

The presence of the factor $V$ can be viewed as a ``$\frac
12$-form correction" in the Kostant-Souriau version of geometric
quantization. Namely, let $\KK = \bigwedge^d(T^{1,\,0}M)^*$ and
$\KK_{red} = \bigwedge^n(T^{1,\,0}M_{red})^*$ be the canonical
line bundles on $M$ and $M_{red}$ and let $\ll, \gg$ and $\ll,
\gg_{red}$ be the Hermitian inner products on these bundles, then
   \begin{equation*}
   \pi_0^* \KK_{red} = \iota^* \KK
   \end{equation*}
and
   \begin{equation*}
   \pi_0^* (V \ll, \gg_{red}) = \iota^* \ll, \gg.
   \end{equation*}
So if $\KK^{\frac 12}$ and $\KK_{red}^{\frac 12}$ are ``$\frac
12$-form" bundles on $M$ and $M_{red}$ (i.e., the square roots of
$\KK$ and $\KK_{red}$), then one has a map
   \begin{equation*}
   \Gamma_{hol}(\LL^k \otimes \KK^{\frac 12})^G \to \Gamma_{hol}
   (\LL^k_{red} \otimes \KK_{red}^{\frac 12})
   \end{equation*}
which is an isometry modulo an error term of order $O(k^{-1})$.
(See \cite{HaK} and \cite{Li} for more details.)


\section{Applications to spectral problems on K\"ahler
quotients}
\label{sec:5}

\subsection{Maximum points of quantum states}
\label{sec:5.1}
Suppose $M$ is a K\"ahler manifold with quantum line bundle $\LL$,
and $\tilde s \in \Gamma_{hol}(\LL)$ is a quantum state. The
``invariance of polarization" conjecture of Kostant-Souriau is
closely connected with the question: where does the function
$\langle \tilde s, \tilde s \rangle$ take its maximum? If $C$ is
the set where $\langle \tilde s, \tilde s\rangle$ takes its
maximum, what can one say about $C$? What is the asymptotic
behavior of the function $\langle \tilde s, \tilde s\rangle^k$ in
a neighborhood of $C$?

To address these questions we begin by recalling the following
results:
\begin{proposition}
\label{prop:5.1}
If $C$ above is a submanifold of $M$, then \\
$\mathrm{(a)}$ $C$ is an isotropic submanifold
of $M$;\\
$\mathrm{(b)}$ $\iota_C^* \tilde s$ is a non-vanishing covariant
constant section of $\iota_C^* \LL$;\\
$\mathrm{(c)}$ Moreover if $M$ is a K\"ahler $G$-manifold and
$\tilde s$ is in $\Gamma_{hol}(\LL)^G$ then $C$ is contained in
the zero level set of $\Phi$.
\end{proposition}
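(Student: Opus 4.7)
The plan is to prove the three assertions in the order (b), (a), (c), since (a) will follow quickly from (b), and (c) is an almost immediate consequence of Lemma~\ref{lemma:3.3}. The conceptual heart of the argument is (b): the pre-quantization compatibility identity (\ref{eqn:2.5}) converts the vanishing of the differential of $\log\langle \tilde s,\tilde s\rangle$ at a critical point into the vanishing of the covariant derivative of $\tilde s$.

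For (b), I would first observe that the maximum value of $\langle \tilde s,\tilde s\rangle$ on $M$ must be strictly positive (otherwise $\tilde s \equiv 0$ and the statement is vacuous), so $\tilde s(p)\ne 0$ for every $p \in C$. In particular, near every such $p$ the section $\tilde s$ is a locally non-vanishing holomorphic section, and (\ref{eqn:2.5}) gives
\[
\frac{\nabla \tilde s}{\tilde s} \;=\; \partial \log \langle \tilde s,\tilde s\rangle.
\]
Since each point of $C$ is a global maximum, $d\log \langle \tilde s,\tilde s\rangle$ vanishes identically on $C$, and hence so does its $(1,0)$-part $\partial \log \langle \tilde s,\tilde s\rangle$. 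Therefore $\nabla \tilde s = 0$ along $C$, which means that the non-vanishing section $\iota_C^*\tilde s$ is covariant constant with respect to $\iota_C^*\nabla$.

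For (a), the existence of a non-vanishing covariant constant section on $C$ forces $\iota_C^*\nabla$ to be flat: for any $p \in C$ and any $X,Y \in T_pC$,
\[
0 \;=\; \bigl(\nabla_X\nabla_Y - \nabla_Y\nabla_X - \nabla_{[X,Y]}\bigr)\iota_C^*\tilde s \;=\; \bigl(\iota_C^* curv(\nabla)\bigr)(X,Y)\cdot \iota_C^*\tilde s(p),
\]
so $\iota_C^* curv(\nabla) = 0$. The pre-quantization identity (\ref{eqn:1.1}) then yields $\iota_C^*\omega = 0$, which is the definition of $C$ being isotropic.

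For (c), I would apply Lemma~\ref{lemma:3.3} directly to the $G$-invariant holomorphic section $\tilde s$, which is non-vanishing near $p \in C$. For any $w = \sqrt{-1}v \in \sqrt{-1}\fg$, the lemma gives
\[
L_{w_M}\log\langle \tilde s,\tilde s\rangle(p) \;=\; -2\langle \Phi(p),v\rangle,
\]
while the critical point condition at $p$ forces the left-hand side to vanish. Hence $\langle \Phi(p),v\rangle = 0$ for every $v \in \fg$, i.e.\ $\Phi(p) = 0$. I do not anticipate any serious obstacle beyond this; the only point needing a little care is ensuring that $\tilde s$ does not vanish on $C$, which, as noted above, follows automatically from $C$ being the locus of a positive maximum.
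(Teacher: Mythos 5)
Your proposal is correct and follows essentially the same route as the paper: part (b) is exactly the paper's use of the compatibility identity (\ref{eqn:2.5}) at the critical locus, and part (c) is the paper's Kostant-formula argument repackaged through Lemma \ref{lemma:3.3} (which is itself a consequence of (\ref{eqn:2.7})). The only cosmetic difference is in (a), where the paper deduces $\iota_C^*\omega=0$ from the pointwise vanishing along $C$ of the primitive $\alpha=\sqrt{-1}\,\bar\partial\log\langle\tilde s,\tilde s\rangle$ of $\omega$, whereas you deduce it from the flatness of $\iota_C^*\nabla$ forced by the parallel non-vanishing section; both hinge on the same critical-point observation together with the prequantization condition (\ref{eqn:1.1}).
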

\begin{proof}
$\mathrm{(a)}$ Let $\alpha = \sqrt{-1} \bar \partial \log \langle
\tilde s, \tilde s \rangle$. Then $\omega = d \alpha$ and
$\alpha_p = 0$ for every $p \in C$, so $\iota_C^* \omega = 0$.
\\
$\mathrm{(b)}$ By (\ref{eqn:2.5}), $\nabla s = 0$ on C.
\\
$\mathrm{(c)}$ By (\ref{eqn:2.7}),
   \begin{equation*}
   \nabla_{v_M}s = \sqrt{-1} \langle \Phi, v \rangle s = 0
   \end{equation*}
along $C$, therefore since $s$ is non-zero on $C$,
$\langle \Phi, v \rangle = 0$ on $C$.
\end{proof}

We will call a submanifold $C$ of $M$ for which the line bundle
$\iota_C^* \LL$ admits a nonzero covariant constant section a
\emph{Bohr-Sommerfeld} set. Notice that if $s_0$ is a section of
$\iota_C^* \LL$ which is non-vanishing, then
   \begin{equation*}
   \frac{\nabla s_0}{s_0} = \alpha_0 \Leftrightarrow
   d\alpha_0 = \iota_C^* \omega ,
   \end{equation*}
so if $s$ is covariant constant then $C$ has to be isotropic. The
most interesting Bohr-Sommerfeld sets are those which are
maximally isotropic, i.e., Lagrangian, and the term
``Bohr-Sommerfeld" is usually reserved for these Lagrangian
submanifolds .

A basic problem in Bohr-Sommerfeld theory is obtaining converse
results to the proposition above. Given a Bohr-Sommerfeld set,
$C$, does there exist a holomorphic section, $s$, of $\LL$ taking
its maximum on $C$, i.e., for which the measure
   \begin{equation}
   \label{eqn:5.0}
   \langle s^k, s^k \rangle \mu_{Liouville}
   \end{equation}
becomes more and more concentrated on $C$ as $k \to \infty$. As we
pointed out in the introduction this problem is often intractable,
however if we are in the setting of GIT theory with $M$ replaced
by $M_{red}$, then the downstairs version of this question can be
translated into the upstairs version of this question which is
often easier. In \S 5.2 we will discuss the behavior of measures
of type (\ref{eqn:5.0}) in general and then in \S 5.5 discuss this
Bohr-Sommerfeld problem.

\subsection{Asymptotics of the measures (1.10)}
\label{sec:5.2}
We will now apply stability theory to the measure
(\ref{eqn:1.8}) on $M_{red}$. For $f$ an integrable function on
$M_{red}$, consider the asymptotic behavior of the integral
   \begin{equation}
   \label{eqn:5.1}
   \int_{M_{red}} f \langle s_k, s_k \rangle \mu_{red},
   \end{equation}
with $s_k \in \Gamma_{hol}(\LL^k_{red})$ and $k \to \infty$. It is
natural to compare (\ref{eqn:5.1}) with the upstairs integral
   \begin{equation}
   \label{eqn:5.2}
   \int_{M_{st}} \pi^* f \langle \pi^* s_k, \pi^* s_k \rangle \mu.
   \end{equation}
However, since $M_{st}$ is noncompact, the integral above may not
converge in general. To eliminate the possible convergence issues,
we multiply the integrand by a cutoff function, i.e.,  a compactly
supported function $\chi$ which is identically 1 on a neighborhood
of $\Phi^{-1}(0)$. In other words, we consider the integral
   \begin{equation}
   \label{eqn:5.3}
   \int_{M_{st}} \chi \pi^*f \langle \pi^*s_k, \pi^* s_k\rangle
   \mu.
   \end{equation}
Obviously, different choices of the cutoff function will not affect
the asymptotic behavior of (\ref{eqn:5.3}).

Using the decomposition (\ref{eqn:4.7}) we get
   \begin{equation*}
   \aligned
   \int_{M_{st}} \chi \pi^*f \langle \pi^* s_k,
   \pi^* s_k \rangle \frac{\omega^d}{d!}
   & = \int_{M_{red}} \left(\int_{G_\CC \cdot p}
   e^{k\psi}\chi d\mu_{\pi}\right) f \langle s_k, s_k
   \rangle_{red} \frac{\omega_{red}^{n}}{n!} \\
   & \sim \int_{M_{red}} V f \langle s_k, s_k \rangle d\mu_{red},
   \endaligned
   \end{equation*}
where $V(q):=V(\pi^{-1}(q))$ is the volume function. We conclude
\begin{proposition}
\label{prop:5.2}
As $k \to \infty$ we have
   \begin{equation*}
   \int_{M_{red}} f \langle s_k, s_k \rangle \mu_{red}
   \sim (\frac k{\pi})^{-m/2}
   \int_M \chi \tilde f \tilde V^{-1}
   \langle \pi^* s_k, \pi^* s_k \rangle \mu,
   \end{equation*}
where $\tilde f = \pi^{*}f, \tilde V=\pi^* V$ and $\chi$ is any
cutoff function near $\Phi^{-1}(0)$.
\end{proposition}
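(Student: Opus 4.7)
The plan is to observe that the proposition is essentially a re-packaging of the fibered-integral computation displayed just above it, and to carry that computation out explicitly by applying Corollary 4.2 to the fibers of the principal $G_{\CC}$-bundle $\pi\colon M_{st}\to M_{red}$. The key input is the stability identity (3.1), applied to the power $\LL_{red}^k$, whose stability function is $k\psi$ (Remark 3.1), so that $\langle\pi^* s_k,\pi^* s_k\rangle = e^{k\psi}\,\pi^*\langle s_k,s_k\rangle_{red}$.

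First, I would decompose the Liouville form on $M_{st}$ using (4.7), $\omega^d/d! = \pi^*(\omega_{red}^n/n!)\wedge d\mu_\pi$ with $d\mu_\pi = h\,\omega^m/m!$ and $h\equiv 1$ on $\Phi^{-1}(0)$, and Fubini-factor the upstairs integral as
\begin{equation*}
\int_M \chi\,\tilde f\,\tilde V^{-1}\,\langle\pi^* s_k,\pi^* s_k\rangle\,\mu
\;=\;\int_{M_{red}}\!\!\left(\int_{G_{\CC}\cdot q}\!\!\chi\,e^{k\psi}\,d\mu_\pi\right) f(q)\,V(q)^{-1}\,\langle s_k,s_k\rangle_{red}(q)\,\frac{\omega_{red}^n}{n!}(q).
\end{equation*}

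Next, I would apply Corollary 4.2 to each inner fiber integral. Because the cutoff $\chi$ is identically $1$ near $\Phi^{-1}(0)$, it equals $1$ at the unique critical point $p \in G_{\CC}\cdot q\cap\Phi^{-1}(0)$ of $\psi$ restricted to $G_{\CC}\cdot q$ (Theorem 3.5), and $h(p)=1$ as well; hence the fiber integral has the leading-order asymptotic $V(q)\,(k/\pi)^{-m/2}(1+O(1/k))$. The factor $V(q)$ cancels against the $V(q)^{-1}$ built into the integrand, reducing the double integral to $(k/\pi)^{-m/2}\int_{M_{red}} f\,\langle s_k,s_k\rangle_{red}\,\mu_{red}$ at leading order, which upon rearrangement yields the stated asymptotic equivalence.

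The main subtlety — and the only part that is not a bookkeeping consequence of Corollary 4.2 — is justifying that the choice of cutoff $\chi$ is immaterial to the leading order, which is what makes the right-hand side well-defined despite the noncompactness of $M_{st}$. This follows from Theorem 3.5: $\psi$ is proper, nonpositive, and attains its maximum $0$ only on $\Phi^{-1}(0)$, so $e^{k\psi}$ decays exponentially off any fixed neighborhood of $\Phi^{-1}(0)$. Thus the contribution of $(1-\chi)\,e^{k\psi}$ to the upstairs integral is $O(e^{-ck})$ for some $c>0$ depending on where $\chi$ first departs from $1$, and this tail error is absorbed into the polynomial $O(k^{-m/2-1})$ remainder produced by the steepest-descent expansion; any two admissible cutoffs therefore give the same asymptotic.
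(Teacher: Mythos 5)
Your argument is correct and is essentially the paper's own proof: the same use of $\langle \pi^*s_k,\pi^*s_k\rangle = e^{k\psi}\pi^*\langle s_k,s_k\rangle_{red}$, the decomposition (4.7), the fiberwise application of Corollary 4.2 with the cancellation of $V$ against $V^{-1}$, and the observation (which the paper dismisses with one word, ``obviously'') that changing the cutoff only alters the integral by an exponentially small amount, by properness of $\psi$. One bookkeeping remark: your computation gives the upstairs integral $\sim (k/\pi)^{-m/2}$ times the downstairs one, so rearranging puts the factor $(k/\pi)^{+m/2}$ in front of the upstairs integral; this is consistent with the paper's own derivation, with (4.8) and with Proposition 5.4, so the exponent appearing in the printed statement of Proposition 5.2 is a sign slip in the paper rather than a defect of your proof, and you should state the relation in the corrected form rather than asserting that it is literally the displayed one.
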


Similarly if we apply the same arguments to the density of states
   \begin{equation}
   \label{eqn:5.4}
   \mu_N  = \sum_i \langle s_{N, i}, s_{N, i} \rangle \mu_{red},
   \end{equation}
where $\{s_{N,i}\}$ is an orthonormal basis of $\LL^N_{red}$, we
get
\begin{proposition}
\label{prop:5.3}
As $N \to \infty$,
   \begin{equation}
   \label{eqn:5.5}
   \int_{M_{red}}f\mu_N \sim (\frac N{\pi})^{-m/2}
   \int_{M_{st}} \chi \tilde f \tilde V^{-1} \mu_N^G,
   \end{equation}
where $\mu_N^G=\sum\limits_i \langle \pi^* s_{N,i}, \pi^* s_{N,i}
\rangle \mu$ is the upstairs $G$-invariant measure
(\ref{eqn:1.18}).
\end{proposition}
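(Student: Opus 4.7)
The plan is to derive Proposition \ref{prop:5.3} by summing Proposition \ref{prop:5.2} over an $L^2$-orthonormal basis $\{s_{N,i}\}$ of $\Gamma_{hol}(\LL_{red}^N)$. Since $M_{red}$ is compact this basis is finite, and by Riemann--Roch its cardinality grows like $N^n$. Applying Proposition \ref{prop:5.2} with $k=N$ and $s_k=s_{N,i}$ and summing over $i$, the left-hand side becomes $\int_{M_{red}} f\,\mu_N$ by the definition (\ref{eqn:5.4}) of the density of states, and the right-hand side becomes $(N/\pi)^{-m/2}\int_{M_{st}} \chi \tilde f \tilde V^{-1} \mu_N^G$ by the analogous definition of $\mu_N^G$. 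Formally this already proves the claim.

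The step that demands care is the uniformity of the asymptotic in Proposition \ref{prop:5.2} as $i$ varies, because the number of terms in the sum is $O(N^n)$. Rather than quoting Proposition \ref{prop:5.2} as a black box, I would redo its proof once at the level of the diagonal Bergman kernel $K_N(q):=\sum_i \langle s_{N,i},s_{N,i}\rangle_{red}(q)$. Using Remark \ref{remark:3.1} to write $\langle \pi^*s_{N,i},\pi^*s_{N,i}\rangle = e^{N\psi}\pi^*\langle s_{N,i},s_{N,i}\rangle_{red}$ and the decomposition (\ref{eqn:4.7}) of $\omega^d/d!$, one rewrites the right-hand side of Proposition \ref{prop:5.3} as
\begin{equation*}
\int_{M_{st}} \chi \tilde f \tilde V^{-1} \mu_N^G = \int_{M_{red}} f(q) V^{-1}(q) K_N(q) \left(\int_{\pi^{-1}(q)} \chi\, e^{N\psi}\, d\mu_\pi\right) \frac{\omega_{red}^n}{n!}.
\end{equation*}
Corollary \ref{cor:4.2} applied fiberwise to the $G$-invariant function $\chi$ evaluates the inner integral as $V(\pi_0^{-1}(q))(N/\pi)^{-m/2}(1+O(1/N))$; the factor $V(\pi_0^{-1}(q))=V(q)$ cancels $V^{-1}(q)$, and one is left with $(N/\pi)^{-m/2}(1+O(1/N))\int_{M_{red}} f K_N\,\mu_{red} = (N/\pi)^{-m/2}\int_{M_{red}} f\,\mu_N(1+O(1/N))$.

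The main obstacle is therefore to control the stationary phase remainder uniformly over $q\in M_{red}$ and with respect to the kernel $K_N$. The transverse Hessian of $\psi$ along $\Phi^{-1}(0)$ was computed in the proof of Theorem \ref{thm:4.1} to equal $-2g_p$, so it is non-degenerate and bounded away from zero uniformly on the compact manifold $\Phi^{-1}(0)$. This yields a Laplace-type expansion of the inner fiber integral whose $O(1/N)$ error is uniform in $q$. Since the total mass of $K_N \mu_{red}$, namely $\dim \Gamma_{hol}(\LL_{red}^N)$, grows only polynomially in $N$, multiplying by $K_N$ and integrating over $M_{red}$ preserves the asymptotic statement. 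This uniformity is the only genuinely analytic input needed beyond what is already in \S \ref{sec:4}, and it is what justifies interchanging the sum over the basis with the asymptotic relation.
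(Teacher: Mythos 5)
Your argument is correct and is essentially the paper's own proof: the paper obtains Proposition \ref{prop:5.3} by ``applying the same arguments'' as in Proposition \ref{prop:5.2} --- the decomposition (\ref{eqn:4.7}), the identity $\langle \pi^*s,\pi^*s\rangle = e^{N\psi}\pi^*\langle s,s\rangle_{red}$, and the fiberwise Laplace asymptotics of Corollary \ref{cor:4.2} --- which is exactly what you do, merely organized at the level of the Bergman kernel $K_N$ together with a (welcome, and harmless) uniformity-in-$q$ remark justifying the summation over the $O(N^n)$ basis elements. Note that what your computation (and the paper's own proof of Proposition \ref{prop:5.2}) actually yields is $\int_{M_{st}}\chi\,\tilde f\,\tilde V^{-1}\mu_N^G \sim (N/\pi)^{-m/2}\int_{M_{red}}f\,\mu_N$, so the factor $(N/\pi)^{-m/2}$ sits on the opposite side from the way (\ref{eqn:5.5}) is literally printed; this is an exponent-sign slip in the statement, not a gap in your proof.
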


\subsection{Asymptotics of the moments}
\label{sec:5.3}
We next describe the role of ``upstairs" versus ``downstairs" in
describing the asymptotic behavior of the distribution function
   \begin{equation}
   \label{eqn:5.6}
   \sigma_{k}([t, \infty)) = \Vol \{z\ |\ \langle s_{k}, s_k
   \rangle(z) \ge t\},
   \end{equation}
for $s_k \in \Gamma_{hol}(\LL^k_{red})$, i.e.,  of the
push-forward measure, $\langle s_k, s_k \rangle_{*} \mu$, on the
real line $\RR$. The moments (\ref{eqn:1.12}) completely determine
this measure, and by theorem \ref{thm:4.1} the moments
(\ref{eqn:1.12}) on $M_{red}$ are closely related to the
corresponding moments (\ref{eqn:1.13}) on $M$. In fact, by
corollary \ref{cor:4.2} and the decomposition (\ref{eqn:4.7}),
   \begin{equation*}
   \aligned
       \int_{M_{st}} \langle \pi^* s_k, \pi^* s_k \rangle^l \mu
   & = \int_{M_{st}}  (\pi^* \langle s_k, s_k\rangle)^l e^{lk\psi}
       \pi^* \frac{\omega_{red}^n}{n!}\wedge h(x)\frac{\omega^m}
       {m!}\\
   & \sim \left(\frac{lk}{\pi}\right)^{-m/2} \int_{M_{red}}
          \langle s_k, s_k\rangle^l V \mu_{red}.
   \endaligned
   \end{equation*}
We conclude
\begin{proposition}
\label{prop:5.4} For any integer $l$, the $l^{th}$ moments
(\ref{eqn:1.13}) satisfy
   \begin{equation}
   \label{eqn:5.7}
   m(l, \pi^* s_k, \mu) \sim \left(\frac {lk}{\pi}
   \right)^{-m/2} m_{red}(l, s_k, V\mu_{red}).
   \end{equation}
as $k \to \infty$.
\end{proposition}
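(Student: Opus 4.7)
The plan is to put together three ingredients already in hand: the defining identity of the stability function (applied to $\LL_{red}^k$, whose stability function is $k\psi$), the fiber-product decomposition \eqref{eqn:4.7} of the Liouville form, and Corollary \ref{cor:4.2} applied orbitwise with parameter $\lambda = lk$. Since
$$\langle \pi^* s_k, \pi^* s_k\rangle^l = e^{lk\psi}\,\pi^*\langle s_k, s_k\rangle_{red}^l,$$
I would first introduce a $G$-invariant cutoff $\chi$ supported in a tubular neighborhood of $\Phi^{-1}(0)$ and identically $1$ near $\Phi^{-1}(0)$. By \eqref{eqn:expsmall}, replacing the integrand by its product with $\chi$ changes the integral by an exponentially small error, and hence does not affect the asymptotic on the scale $(lk)^{-m/2}$.

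Next I would use the decomposition \eqref{eqn:4.7} together with Fubini to rewrite
$$\int_M \chi\,\langle \pi^* s_k, \pi^* s_k\rangle^l\,\mu = \int_{M_{red}} \langle s_k, s_k\rangle_{red}^l\left(\int_{G_\CC\cdot q} \chi\, e^{lk\psi}\, d\mu_\pi\right)\frac{\omega_{red}^n}{n!}.$$
Applying Corollary \ref{cor:4.2} on each $G_\CC$-orbit, the inner integral satisfies
$$\int_{G_\CC\cdot q} \chi\, e^{lk\psi}\, d\mu_\pi \sim V(q)\left(\frac{lk}{\pi}\right)^{-m/2}\bigl(1+O((lk)^{-1})\bigr),$$
where $V(q) = \Vol(\pi_0^{-1}(q))$. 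Substituting this back and pulling the power of $lk$ outside the integral over the compact base $M_{red}$ yields the leading term
$$\left(\frac{lk}{\pi}\right)^{-m/2}\int_{M_{red}} \langle s_k, s_k\rangle_{red}^l\, V\, \frac{\omega_{red}^n}{n!} = \left(\frac{lk}{\pi}\right)^{-m/2} m_{red}(l, s_k, V\mu_{red}),$$
which is exactly the asserted asymptotic.

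The main technical point is ensuring that the orbitwise expansion is uniform in $q \in M_{red}$, so that integration over the base does not spoil the asymptotic. Uniformity follows because the Hessian computation in the proof of Theorem \ref{thm:4.1}, namely $d^2\psi_p(w_M, w_M') = -2\,g_p(w_M, w_M')$, is exact at every $p \in \Phi^{-1}(0)$, giving a non-degenerate Gaussian model with a smooth, nowhere-vanishing normalization along the compact zero level set. The remainder in \eqref{eqn:4.5} is then controlled uniformly in $p$, and the cutoff $\chi$ ensures that no contribution from the noncompact directions of $M_{st}$ enters at any polynomial order in $(lk)^{-1}$. Since $\langle s_k, s_k\rangle_{red}^l$ is a smooth function on the compact manifold $M_{red}$, integrating against this uniform expansion preserves the leading behavior and completes the argument.
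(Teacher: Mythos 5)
Your argument is essentially the paper's own proof: the identity $\langle \pi^*s_k,\pi^*s_k\rangle^l = e^{lk\psi}\,\pi^*\langle s_k,s_k\rangle_{red}^l$, the fibered decomposition (\ref{eqn:4.7}), and Corollary \ref{cor:4.2} applied along each $G_\CC$-orbit with $\lambda = lk$ are exactly the ingredients used there. Your added remarks on the cutoff (justified via (\ref{eqn:expsmall}), under the same implicit integrability of $e^{\lambda\psi}$ the paper assumes) and on uniformity of the orbitwise expansion over the compact base only make explicit points the paper leaves tacit, so the proposal is correct and not a different route.
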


\subsection{Asymptotic expansion of the $G$-invariant density of states}
\label{sec:5.4}
For the measure (\ref{eqn:1.15}), Boutet-Guillemin showed
that it admits an asymptotic expansion (\ref{eqn:1.17}) in inverse
power of $N$ as $N \to \infty$ if the manifold is compact (See the
appendix for a proof of this result). By applying stability theory
above, we get from the Boutet-Guillemin's expansion for the
downstairs manifold a similar asymptotic expansion upstairs for
the $G$-invariant density of states without assuming $M$ to be
compact. Namely, since $M_{red}$ is compact, Boutet-Guillemin's
theorem gives one an asymptotic expansion
   \begin{equation*}
   \mu_N^{red}(f) = \tr(\pi_N^{red}M_f\pi_N^{red}) \sim
   \sum_{i=n-1}^{-\infty} a_i^{red} N^i,
   \end{equation*}
and for $\pi_N^G: L^2(\LL^N, \mu) \to \Gamma_{hol}(\LL^N)^G$ the
orthogonal projection onto $G$-invariant holomorphic sections, we
will deduce from this:
\begin{theorem}
\label{thm:5.5}
For any compactly supported $G$-invariant function $f$ on $M$,
   \begin{equation}
   \label{eqn:5.8}
   \tr(\pi_N^G M_f \pi_N^G) \sim \sum_{i=n-1}^{-\infty}
   a_i^G(f) N^i,
   \end{equation}
as $N \to \infty$, and the coefficients $a_i^G$ can be computed
explicitly from $a_i^{red}$. In particular, the leading
coefficient $a_{n-1}^G(f)=a_{n-1}^{red}(f_0 V)$, where
$f_0(p)=f(\pi_0^{-1}(p))$.
\end{theorem}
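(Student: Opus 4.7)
The plan is to transfer the upstairs $G$-invariant trace to a Toeplitz-operator computation on the compact reduced space $M_{red}$, where Boutet-Guillemin's theorem applies directly; the stability function provides the bridge.

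First I would invoke the ``quantization commutes with reduction'' bijection $\pi^*: \Gamma_{hol}(\LL_{red}^N) \to \Gamma_{hol}(\LL^N)^G$ of Theorem 1.1 to work with a concrete basis. Fix an orthonormal basis $\{v_{N,j}\}$ of $\Gamma_{hol}(\LL_{red}^N)$ with respect to $\langle \cdot, \cdot\rangle_{red}$; the pullbacks $\{\pi^* v_{N,j}\}$ form a basis of $\Gamma_{hol}(\LL^N)^G$ that is generally not orthonormal. A standard change-of-basis manipulation gives
\begin{equation*}
\tr(\pi_N^G M_f \pi_N^G) = \tr(G^{-1} F),
\end{equation*}
where $G_{jk} := \langle \pi^* v_{N,j}, \pi^* v_{N,k}\rangle_M$ is the Gram matrix of the pulled-back basis and $F_{jk} := \int_M f \, \langle \pi^* v_{N,j}, \pi^* v_{N,k}\rangle \, \mu$.

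Second, I would apply the stability identity $\langle \pi^* s, \pi^* s'\rangle = e^{N\psi} \pi^* \langle s, s'\rangle_{red}$ together with the fiber decomposition (\ref{eqn:4.7}) to rewrite the entries of $F$ (and of $G$, with $f$ replaced by $1$) as fiber integrals over $M_{red}$. Corollary \ref{cor:4.2}, made uniform in the base point $q \in M_{red}$, supplies a full asymptotic expansion of the inner fiber integral as a power series in $1/N$ whose leading symbol is $V(q) f_0(q)$ (respectively $V(q)$). As operators on $\Gamma_{hol}(\LL_{red}^N)$, $F$ and $G$ therefore admit asymptotic expansions as sums of Toeplitz operators on $M_{red}$:
\begin{equation*}
F \sim \left(\tfrac{N}{\pi}\right)^{-m/2}\sum_{k \ge 0} N^{-k}\, T_{V \gamma_k(f)}, \qquad G \sim \left(\tfrac{N}{\pi}\right)^{-m/2}\sum_{k \ge 0} N^{-k}\, T_{V \gamma_k(1)},
\end{equation*}
with $\gamma_0(f) = f_0$, $\gamma_0(1) = 1$, and higher $\gamma_k$ determined by the Taylor expansion of $f$ along the imaginary orbits and by the K\"ahler geometry; here $T_g := \pi_N^{red} M_g \pi_N^{red}$.

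Third, I would use the Berezin-Toeplitz calculus on the compact K\"ahler manifold $M_{red}$: products of Toeplitz operators are again Toeplitz modulo $O(N^{-1})$, with full asymptotic expansions, and $T_V$ is asymptotically invertible since $V > 0$ on the compact $M_{red}$. Inverting the expansion for $G$ and composing with that for $F$ yields
\begin{equation*}
G^{-1} F \sim \sum_{k \ge 0} N^{-k}\, T_{\beta_k(f)}
\end{equation*}
for explicit symbols $\beta_k(f)$ on $M_{red}$ computable from the $\gamma_j$ and $V$. Applying Boutet-Guillemin's theorem on $M_{red}$ to each Toeplitz operator $T_{\beta_k(f)}$ and summing produces the expansion $\tr(\pi_N^G M_f \pi_N^G) \sim \sum_i a_i^G(f) N^i$, with each $a_i^G(f)$ an explicit combination of the downstairs coefficients $a_j^{red}$. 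The top identity $a_{n-1}^G(f) = a_{n-1}^{red}(f_0 V)$ is then read off from the top-order contribution, where the leading symbol $V f_0$ of $F$ is the symbol governing the top coefficient.

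The main technical obstacle is to make the formal manipulations rigorous: one must show that the asymptotic series for $F$, $G$, $G^{-1}$, and $G^{-1}F$ are genuine asymptotic expansions of trace-class operators on the finite-dimensional spaces $\Gamma_{hol}(\LL_{red}^N)$, uniform in $N$ in a norm strong enough to justify term-by-term trace-taking. Subsidiary points include uniformity of Corollary \ref{cor:4.2} as $q$ varies over $M_{red}$ and control of contributions from outside $M_{st}$ in the noncompact upstairs manifold; both are handled by the compact support of $f$ together with the exponential off-shell decay of $e^{N\psi}$ away from $\Phi^{-1}(0)$ guaranteed by Theorem \ref{thm:3.5}.
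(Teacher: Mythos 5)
Your overall strategy is the same as the paper's: use the stability identity and the fiber decomposition (\ref{eqn:4.7}) to convert the upstairs $G$-invariant trace into downstairs Toeplitz-type traces, apply the fiberwise steepest-descent expansion (Theorem \ref{thm:4.1}/Corollary \ref{cor:4.2}) uniformly in the base point, and then invoke Boutet de Monvel--Guillemin on the compact $M_{red}$ term by term. Your implementation is in one respect more careful than the paper's: the paper chooses $\{s_{N,j}\}$ orthonormal with respect to $V\mu_{red}$, asserts that the pullbacks are orthogonal with a $j$-independent expansion of $\|\pi^*s_{N,j}\|^2$, and divides by the norms, whereas your Gram-matrix formula $\tr(\pi_N^G M_f\pi_N^G)=\tr(G^{-1}F)$ handles the failure of exact orthogonality honestly, at the price of needing the Berezin--Toeplitz composition/inversion calculus and trace-norm control of the remainders (which you correctly flag). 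That part of your argument is sound and would deliver the expansion (\ref{eqn:5.8}).

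The genuine gap is in your last step. With your basis orthonormal for $\langle\cdot,\cdot\rangle_{red}$ (i.e.\ for $\mu_{red}$), you correctly find $F\sim(\tfrac N\pi)^{-m/2}T_{Vf_0}$ and $G\sim(\tfrac N\pi)^{-m/2}T_{V}$; but then the principal symbol of $G^{-1}F\sim T_V^{-1}T_{Vf_0}$ is $f_0$, not $Vf_0$ --- the factor $V$ coming from the fiber volumes is cancelled by the $T_V^{-1}$ contributed by the Gram matrix. So your sentence ``the leading symbol $Vf_0$ of $F$ is the symbol governing the top coefficient'' is inconsistent with your own setup: as written, your argument produces a leading coefficient proportional to $\int_{M_{red}}f_0\,\mu_{red}$, i.e.\ the top Boutet--Guillemin coefficient of $f_0$ for the $\mu_{red}$-quantization, and the asserted identity $a_{n-1}^G(f)=a_{n-1}^{red}(f_0V)$ does not follow without saying what inner product defines the $a_i^{red}$. (A quick sanity check: if $f\equiv 1$ near $\Phi^{-1}(0)$, the trace is asymptotically $\dim\Gamma_{hol}(\LL^N_{red})$, whose top coefficient is proportional to $\mathrm{Vol}(M_{red})$, not to $\int V\mu_{red}=\mathrm{Vol}(\Phi^{-1}(0))$.) The paper's formula is tied to its choice of the $V\mu_{red}$-orthonormal basis in the definition of $\mu_N^{red}$; to make your proof close, you must either switch to that weighted basis (so that $G$ is asymptotically scalar and the $V$ stays inside the downstairs density of states, as in the paper), or keep your basis and restate the conclusion with the correctly cancelled symbol, tracking explicitly which downstairs quantization (plain $\mu_{red}$ versus the $V$-weighted, half-form--corrected one of \S\ref{sec:4.3}) defines the coefficients $a_i^{red}$ you quote.
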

\begin{proof}
Let $\{s_{N,j}\}$ be an orthonormal basis of
$\Gamma_{hol}(\LL^N_{red})$ with respect to the volume form
$V\mu_{red}$, then $\{\pi^* s_{N,j}\}$ is an orthogonal basis of
$\Gamma_{hol}(\LL^N)^G$, and
   \begin{equation*}
   \tr(\pi_N^G M_f \pi_N^G) = \int_M \sum_j \frac{\langle \pi^*
   s_{N,j}, \pi^* s_{N,j} \rangle}{\|\pi^*s_{N,j}\|^2} f \mu,
   \end{equation*}
where, by the same argument as in the proof of (\ref{eqn:4.8}), we
have
   \begin{equation*}
   \|\pi^*s_{N,j}\|^2 \sim \left(\frac N{\pi}\right)^{-m/2}
   \left( 1 + \sum_i C_i N^{-i}\right),
   \end{equation*}
which implies
   \begin{equation*}
   \frac 1{\|\pi^*s_{N,j}\|^2} \sim \left(\frac N{\pi}\right)^{m/2}
   \left( 1 + \sum_i \tilde C_i N^{-i}\right).
   \end{equation*}
Moreover, it is easy to see that
   \begin{equation*}
   \int_{M_{red}} \sum_j \langle s_{N,j}, s_{N,j} \rangle
   Vf_0\mu_{red}
   =\mu_N^{red}(f_0V).
   \end{equation*}
Now the theorem follows from straightforward computations
   \begin{equation*}
   \aligned
  & \tr(\pi_N^G M_f \pi_N^G)
   \sim  \left(\frac N{\pi}\right)^{\frac m2} ( 1 + \sum_i \tilde C_i N^{-i})
          \int_{M_{st}}\sum_j \langle
        \pi^*s_{N,j},\pi^* s_{N,j}\rangle f\mu  \\
   & \sim  \left(\frac N{\pi}\right)^{\frac m2}  ( 1 + \sum_i \tilde C_i N^{-i} )
          \int_{M_{red}}
          \left( \int_{G\cdot p} \sum_{i=-\frac m2}^{-\infty}
           N^i c_i(f, p) \right) \sum_j \langle s_{N,j}, s_{N,j}\rangle
           (p) & \\
   &  = \left(\frac N{\pi}\right)^{\frac m2} ( 1 + \sum_i \tilde C_i N^{-i})
         \int_{M_{red}} \sum_{i=-\frac m2}^{-\infty} \left( N^i
        \int_{G \cdot p} c_i(f, p)d\nu\right) \sum_j \langle
        s_{N,j},
        s_{N,j} \rangle (p)\\
   & = \left(\frac N{\pi}\right)^{\frac m2} ( 1 + \sum_i \tilde C_i N^{-i})
        \sum_{i=-\frac m2}^{-\infty} N^{i} \mu_N^{red}
       ( c_i V) \\
   & \sim \sum_{i=n-1}^{-\infty} a_i^G(f) N^i,
   \endaligned
   \end{equation*}
where we used the fact that since $f$ is $G$-invariant, so is
$c_i(f,p)$. This proves (\ref{eqn:5.8}). Moreover, since
$c_{-m/2}(f,p)=f(p)/\pi^{m/2}$, we see that
   \begin{equation*}
   a^G_{n-1}(f) = a_{n-1}^{red}(f_0V),
   \end{equation*}
completing the proof.
\end{proof}

\subsection{Bohr-Sommerfeld Lagrangians}
\label{sec:5.5}
We assume we are in the same setting as before, and denote by
$\nabla_{red}$ the
metric connection on $\LL_{red}$. Suppose
$\Lambda_{red}$ is a Bohr-Sommerfeld Lagrangian submanifold of
$M_{red}$, and $s_{BS}$ is a covariant constant section, i.e.,
   \begin{equation}
   \label{eqn:5.9}
   s_{BS}: \Lambda_{red} \to \iota_{\Lambda_{red}}^*\LL_{red}, \quad
   (\iota_{\Lambda_{red}}^*\nabla_{red}) s_{BS} = 0,
   \end{equation}
where $\iota_{\Lambda_{red}}: \Lambda_{red} \to M_{red}$ is the
inclusion map. Let $\Lambda = \pi_0^{-1}(\Lambda_{red})$, then
$\Lambda \subset \Phi^{-1}(0)$ is a $G$-invariant Lagrangian
submanifold of $M$. Since
   \begin{equation}
   \label{eqn:5.10}
   \pi_0^* \nabla_{red} s_{BS} = \iota_\Lambda^* \nabla \pi_0^* s_{BS},
   \end{equation}
we see that $\pi_0^* s_{BS}$ is a covariant constant section on
$\Lambda$. In other words, $\Lambda$ is a Bohr-Sommerfeld
Lagrangian submanifold of $M$. Conversely, if $\Lambda$ is a
$G$-invariant Bohr-Sommerfeld Lagrangian submanifold of $M$, then
$\Lambda_{red} = \pi_0(\Lambda)$ is a Bohr-Sommerfeld Lagrangian
submanifold of $M_{red}$.

Fixing a volume form $\mu_\Lambda$ on $\Lambda$, the pair
$(\Lambda_{red}, s_{BS})$ defines a functional $l$ on the space of
holomorphic sections by
   \begin{equation*}
   l: \Gamma_{hol}(\LL_{red}) \to \CC, \quad s \mapsto
   \int_{\Lambda_{red}} \langle \iota_{\Lambda_{red}}^*s,
   s_{BS} \rangle \mu_{\Lambda_{red}}.
   \end{equation*}
This in turn defines a global holomorphic section
$s_{\Lambda_{red}} \in \Gamma_{hol}(\LL_{red})$ by duality. In
other words, $s_{\Lambda_{red}}$ is the holomorphic section on
$M_{red}$ with the defining property
   \begin{equation}
   \label{eqn:5.11}
   \int_{M_{red}} \langle s, s_{\Lambda_{red}} \rangle\mu_{red}=
   \int_{\Lambda_{red}} \langle \iota_{\Lambda_{red}}^* s,
   s_{BS} \rangle \mu_{\Lambda_{red}}
   \end{equation}
for all $s \in \Gamma_{hol}(\LL_{red})$. A fundamental problem in
Bohr-Sommerfeld theory is to know whether the section
$s_{\Lambda_{red}}$ vanishes identically; and if not, to what
extent $s_{\Lambda_{red}}$ is ``concentrated" on the set
$\Lambda_{red}$. One can also ask this question for the analogous
section of $\LL_{red}^k$.

We apply the upstairs-vs-downstairs philosophy to these problems.
For the upstairs Bohr-Sommerfeld Lagrangian pair $(\Lambda, \tilde
s_{BS})$, $\tilde s_{BS}=\pi_0^* s_{BS}$, as above one can
associate with it a functional $\tilde l$ on
$\Gamma_{hol}(\LL)^G$, which by duality defines a global
$G$-invariant section $\tilde s_{\Lambda} \in
\Gamma_{hol}(\LL)^G$. Obviously $l \ne 0$ if and only if $\tilde
l$ is nonzero on $\Gamma_{hol}(\LL)^G$. However, since $\tilde
s_{BS}$ is a $G$-invariant section,
   \begin{equation*}
   \langle \tilde s, \tilde s_{BS} \rangle =  \langle \tilde s^G,
   \tilde s_{BS} \rangle ,
   \end{equation*}
where $\tilde s^G$ is the orthogonal projection of $\tilde s \in
\Gamma_{hol}(\LL)$ onto $\Gamma_{hol}(\LL)^G$. Thus $\tilde l$ is
nonzero on $\Gamma_{hol}(\LL)^G$ if and only if it is nonzero on
$\Gamma_{hol}(\LL)$.
Thus we proved
\begin{proposition}
\label{prop:5.6}
$s_{\Lambda_{red}} \ne 0$ if and only if $\tilde s_\Lambda \ne 0$.
\end{proposition}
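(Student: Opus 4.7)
The strategy is to reduce the proposition to the statement that a holomorphic section vanishes precisely when the linear functional it represents via the $L^{2}$ pairing is identically zero, and then to identify the two functionals $l$ (downstairs) and $\tilde l$ (upstairs) up to a nonzero factor by systematic use of the Q-R bijection of Theorem 1.1 together with the metric identity (\ref{eqn:2.8}).

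First, I would record two trivial duality statements. Since $\Gamma_{hol}(\LL_{red})$ is finite dimensional and the $L^{2}$ pairing is nondegenerate, the defining equation (\ref{eqn:5.11}) for $s_{\Lambda_{red}}$ shows that $s_{\Lambda_{red}}\ne 0$ if and only if the functional $l:\Gamma_{hol}(\LL_{red})\to\CC$ is not identically zero. The analogous argument upstairs, carried out on the Hilbert space $\Gamma_{hol}(\LL)^{G}$ (which is finite dimensional by Theorem 1.1), gives $\tilde s_{\Lambda}\ne 0$ if and only if $\tilde l$ is not identically zero on $\Gamma_{hol}(\LL)^{G}$.

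Next I would use the $G$-invariance of $\tilde s_{BS}=\pi_{0}^{*}s_{BS}$ to see that $\tilde l$ factors through orthogonal projection onto $\Gamma_{hol}(\LL)^{G}$: for any $\tilde s\in\Gamma_{hol}(\LL)$, the pointwise pairing $\langle\tilde s,\tilde s_{BS}\rangle$ on $\Lambda$ equals $\langle\tilde s^{G},\tilde s_{BS}\rangle$, so $\tilde l$ vanishes on $\Gamma_{hol}(\LL)^{G}$ iff it vanishes on all of $\Gamma_{hol}(\LL)$. This is already pointed out in the paragraph preceding the proposition.

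The heart of the proof is the identification $l\equiv 0\Leftrightarrow\tilde l\equiv 0$ on $\Gamma_{hol}(\LL)^{G}$, for which I would proceed as follows. By Theorem 1.1, pullback gives a bijection $\pi^{*}:\Gamma_{hol}(\LL_{red})\to\Gamma_{hol}(\LL)^{G}$. For $s\in\Gamma_{hol}(\LL_{red})$, restrict the pairing defining $\tilde l(\pi^{*}s)$ to $\Lambda\subset\Phi^{-1}(0)$ and use the metric identity (\ref{eqn:2.8}) to rewrite
\[
\langle\iota_{\Lambda}^{*}\pi^{*}s,\tilde s_{BS}\rangle
=\langle\iota_{\Lambda}^{*}\pi^{*}s,\pi_{0}^{*}s_{BS}\rangle
=\pi_{0}^{*}\langle\iota_{\Lambda_{red}}^{*}s,s_{BS}\rangle_{red}.
\]
Since $\Lambda\to\Lambda_{red}$ is a principal $G$-bundle, choosing $\mu_{\Lambda}$ compatibly with $\mu_{\Lambda_{red}}$ and Haar measure on the fibers and applying Fubini yields $\tilde l(\pi^{*}s)=c\,l(s)$ with a strictly positive constant $c$ (or a strictly positive function on $\Lambda_{red}$ for a generic choice; one then normalizes $\mu_{\Lambda}$ by the $G$-orbit volume to absorb it into a positive constant). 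Thus $l\equiv 0$ iff $\tilde l\circ\pi^{*}\equiv 0$, which via the bijection of Theorem 1.1 is iff $\tilde l\equiv 0$ on $\Gamma_{hol}(\LL)^{G}$. Stringing the three equivalences together yields the proposition.

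The main technical point — and the only place where care is really needed — is the Fubini step on $\Lambda\to\Lambda_{red}$: one must make a choice of volume form $\mu_{\Lambda}$ that is compatible with $\mu_{\Lambda_{red}}$ via the $G$-fibration so that the constant $c$ above is honestly positive. Once that compatibility is fixed, everything else is formal, relying only on Theorem 1.1, the metric identity (\ref{eqn:2.8}), and finite-dimensional Riesz duality.
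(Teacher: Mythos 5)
Your argument is correct and is essentially the paper's own: the paper proves this proposition in the paragraph preceding it by the same finite-dimensional duality reductions plus the $G$-invariance of $\tilde s_{BS}$, simply declaring the equivalence $l\neq 0 \Leftrightarrow \tilde l\neq 0$ on $\Gamma_{hol}(\LL)^G$ to be obvious. Your only addition is to spell out that step via (\ref{eqn:2.8}) and fiber integration over $\Lambda\to\Lambda_{red}$, including the compatibility of $\mu_\Lambda$ with $\mu_{\Lambda_{red}}$, which the paper likewise leaves implicit (compare its later assertion that the right-hand sides of (\ref{eqn:5.12}) and (\ref{eqn:5.13}) coincide).
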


A natural question to ask is whether $\pi^*s_{\Lambda_{red}}$
coincides with $\tilde s_\Lambda$ on $M_{st}$, or alternatively,
whether $\pi_0^* s_{\Lambda_{red}} = \iota^* \tilde s_\Lambda$ on
$\Phi^{-1}(0)$. In view of the $\frac 12$-form correction, we will
modify the definition of the downstairs section
$s_{\Lambda_{red}}$ to be
   \begin{equation}
   \label{eqn:5.12}
   \int_{M_{red}} \langle s, s_{\Lambda_{red}} \rangle V \mu_{red}=
   \int_{\Lambda_{red}} \langle \iota_{\Lambda_{red}}^* s,
   s_{BS} \rangle V \mu_{\Lambda_{red}}\ ,
   \end{equation}
for $s, s_{\Lambda_{red}} \in \Gamma_{hol}(\LL_{red})$. The
upstairs version of this is
   \begin{equation}
   \label{eqn:5.13}
   \int_{M_{st}} \langle \tilde s, \tilde s_{\Lambda} \rangle \mu=
   \int_{\Lambda} \langle \iota_{\Lambda}^* \tilde s, \pi_0^*
   s_{BS} \rangle \mu_\Lambda
   \end{equation}
for $\tilde s = \pi^* s$. Since $\Lambda =
\pi_0^{-1}(\Lambda_{red})$, the right hand sides of
(\ref{eqn:5.12}) and (\ref{eqn:5.13}) coincide. Thus
   \begin{equation}
   \label{eqn:5.14}
   \int_{M_{st}} \langle \pi^* s, \tilde s_{\Lambda} \rangle \mu=
   \int_{M_{red}} \langle s, s_{\Lambda_{red}} \rangle V \mu_{red}
   \end{equation}
for all $s \in \Gamma_{hol}(\LL_{red})$.

Now we assume $s_k \in \Gamma_{hol}(\LL_{red}^k)$, $s_{BS}^k$ being
the $k^{th}$ tensor power of $s_{BS}$, and let
$s^{(k)}_{\Lambda_{red}}$ and $\tilde s^{(k)}_{\Lambda}$ be the
corresponding holomorphic sections. Then equation (\ref{eqn:5.14})
now reads
   \begin{equation}
   \label{eqn:5.15}
   \int_{M_{st}} \langle \pi^* s_k, \tilde s^{(k)}_{\Lambda}
   \rangle \mu= \int_{M_{red}} \langle s_k, s^{(k)}_{\Lambda_{red}}
   \rangle V \mu_{red}
   \end{equation}
for all $s_k \in \Gamma_{hol}(\LL_{red}^k)$ (However, the sections
$\tilde s^{(k)}_{\Lambda}$ and $s^{(k)}_{\Lambda_{red}}$ are no
longer the $k^{th}$ tensor powers of $\tilde s_{\Lambda}$ and
$s_{\Lambda_{red}}$ above). Notice that we can choose the two
sections in (\ref{eqn:3.1}) to be different nonvanishing sections
and still get the same stability function $\psi$. Thus applying
stability theory, one has
   \begin{equation*}
   \int_{M_{st}} \langle \pi^* s_k, \pi^* s^{(k)}_{\Lambda_{red}} \rangle
   \mu \sim (\frac k{\pi})^{m/2}
   \int_{M_{red}} \langle s_k, s^{(k)}_{\Lambda_{red}} \rangle V
   \mu_{red}
   \end{equation*}
for all $s_k$ as $k \to \infty$. This together with
(\ref{eqn:5.15}) implies that asymptotically
   \begin{equation*}
   \pi^*s^{(k)}_{\Lambda_{red}} \sim (\frac k{\pi})^{m/2} \tilde
   s^{(k)}_\Lambda, \; k \to \infty.
   \end{equation*}
   %


\section{Toric varieties}
\label{sec:6}

\subsection{The Delzant construction}
\label{sec:6.1}
Let $\LL = \CC^d \times \CC$ be the trivial line bundle over
$\CC^d$ equipped with the Hermitian inner product
   \begin{equation}
   \label{eqn:metric}
   \langle 1, 1 \rangle = e^{-|z|^2},
   \end{equation}
where $1: \CC^d \to \LL$, $z \mapsto (z,1)$ is the standard
trivialization of $\LL$. The line bundle $\LL$ is the
pre-quantum line bundle for $\CC^d$, since
   \begin{equation*}
   curv(\nabla) = -\sqrt{-1}\bar \partial \partial \log{\langle 1,
     1 \rangle} = \sqrt{-1} \sum d\bar z \wedge dz = -\omega.
   \end{equation*}

Let $K = (S^1)^d$ be the $d$-torus, which acts on $\CC^d$ by the
diagonal action,
   \begin{equation*}
   \tau(e^{it_1}, \cdots, e^{it_d}) \cdot (z_1, \cdots, z_d) =
   (e^{it_1}z_1, \cdots, e^{it_d}z_d).
   \end{equation*}
This is a Hamiltonian action with moment map
   \begin{equation}
   \label{eqn:6.1}
   \phi(z) = \sum_{i=1}^d |z_i|^2 e_i^*,
   \end{equation}
where $e_1^*, \cdots, e_d^*$ is the standard basis of $\fk^* =
\RR^d$.

Now suppose $G \subset K$ is an $m$-dimensional sub-torus of $K$,
$\fg=\,$Lie$(G)$ its Lie algebra, and $\ZZ_G^* \subset \fg^*$ the
weight lattice. Then the restriction of the $K$-action to $G$ is
still Hamiltonian, with moment map
   \begin{equation}
   \label{eqn:6.2}
   \Phi(z) = L \circ \phi(z) = \sum_{i=1}^d |z_i|^2 \alpha_i,
   \end{equation}
where $\alpha_i = L(e_i^*) \in \ZZ_G^*$, and $L: \mathfrak k^* \to
\fg^*$ is the transpose of the inclusion $\fg \hookrightarrow
\fk$.

We assume that the moment map $\Phi$ is proper, or alternatively,
that the $\alpha_i$'s are polarized: there exists $v \in \fg$ such
that $\alpha_i(v) > 0$ for all $1 \le i \le d$. Let $\alpha \in
\ZZ_G^*$ be fixed, with the property that $G$ acts freely on
$\Phi^{-1}(\alpha)$. Then the symplectic quotient at level
$\alpha$,
   \begin{equation*}
   M_{\alpha} = \Phi^{-1}(\alpha)/G,
   \end{equation*}
is a symplectic toric manifold; and by Delzant's theorem, all
toric manifolds arise this way.

The Hamiltonian action of $K$ on $\CC^d$ induces a Hamiltonian
action of $K$ on $M_\alpha$, with moment map $\Phi_\alpha$
defined by
   \begin{equation}
   \label{eqn:6.3}
   \phi \circ \iota_\alpha  = \Phi_\alpha \circ \pi_\alpha,
   \end{equation}
where $\iota_\alpha: \Phi^{-1}(\alpha) \hookrightarrow \CC^d$ is
the inclusion map, and $\pi_\alpha: \Phi^{-1}(\alpha) \to
M_\alpha$ the projection map. The moment polytope of this
Hamiltonian action on $M_\alpha$ is
   \begin{equation}
   \label{eqn:6.4}
   \Delta_\alpha = L^{-1}(\alpha) \cap \RR^d_+ =
   \{ t \in \RR^d \ |\ t_i \ge 0, \ \sum t_i \alpha_i = \alpha\}.
   \end{equation}
If we replace $\LL$ by $\LL^k$, i.e. the trivial line bundle over
$\CC^d$ with Hermitian inner product $\langle 1, 1
\rangle_k=e^{-k|z|^2}$, then everything proceeds as above, and the
moment polytope is changed to $k \Delta_\alpha =
\Delta_{k\alpha}$.

\subsection{Line bundles over toric varieties}
\label{sec:6.2}
As we showed in \S 2, $M_\alpha$ also admits the following GIT
description,
   \begin{equation*}
   M_\alpha = \CC_{st}^d(\alpha) / G_{\CC},
   \end{equation*}
where $G_\CC \simeq (\CC^*)^n$ is the complexification of $G$, and
$\CC_{st}^d(\alpha)$ is the $G_\CC$ flow-out of
$\Phi^{-1}(\alpha)$. This flow-out is easily seen to be identical
with the set
   \begin{equation}
   \label{eqn:6.5}
   \CC_{st}^d(\alpha) = \{z \in \CC^d\ |\ I_z \in
   I_{\Delta_\alpha}\},
   \end{equation}
where
   \begin{equation*}
   I_z = \{i\ |\ z_i \ne 0\}
   \end{equation*}
and
   \begin{equation*}
   I_{\Delta_\alpha} = \{I_t\ |\ t \in \Delta_\alpha\}.
   \end{equation*}

Now let $G$ act on the line bundle $\LL$ by acting on the trivial
section, $1$, of $\LL$, by weight $\alpha$. (In Kostant's formula
(\ref{eqn:2.7}) this has the effect of shifting the moment map
$\Phi$ by $\alpha$, so that the new moment map becomes $\Phi -
\alpha$ and the $\alpha$ level set of $\Phi$ becomes the zero
level set of $\Phi-\alpha$). This action extends to an action of
$G_\CC$ on $\LL$ which acts on the trivial section $1$ by the
complexification, $\alpha_\CC$, of the weight $\alpha$ and we can
form the quotient line bundle,
   \begin{equation*}
   \LL_\alpha = \iota_\alpha^* \LL/G = \LL_{st}(\alpha) / G_\CC,
   \end{equation*}
where $\LL_{st}(\alpha)$ is the restriction of $\LL$ to
$\CC_{st}^d(\alpha)$.

The holomorphic sections of $\LL^k_\alpha$ are closely related to
monomials in $\CC^d$. In fact, since $\LL$ is the trivial line
bundle, the monomials
   \begin{equation*}
   z^m = z_1^{m_1} \cdots z_d^{m_d}
   \end{equation*}
are holomorphic sections of $\LL$, and by Kostant's formula, $z^m$
is a $G$-invariant section of $\LL$ (with respect to the moment
map $\Phi_\alpha$) if and only if
   \begin{equation*}
   \tau^\#(\exp v)^* z^m = e^{i\alpha(v)} z^m
   \end{equation*}
for all $v \in \fg$; in other words, if and only if $m$ is an
integer point in $\Delta_\alpha$. So we obtain
   \begin{equation}
   \label{eqn:6.6}
   \Gamma_{hol}(\LL)^G = \span\{z^m\ |\ m \in \Delta_\alpha \cap
   \ZZ^d\}.
   \end{equation}
In view of (\ref{eqn:6.5}), $\CC_{st}^d(\alpha)$ is Zariski open,
so the GIT mapping
   \begin{equation*}
   \gamma: \Gamma_{hol}(\LL)^G \to \Gamma_{hol}(\LL_\alpha)
   \end{equation*}
is bijective, although $\CC^d$ is noncompact. As a result, the
sections
   \begin{equation}
   \label{eqn:6.7}
   s_m = \gamma(z^m), \;\ m \in \Delta_\alpha \cap \ZZ^d.
   \end{equation}
give a basis of $\Gamma_{hol}(\LL_\alpha)$.

To compute the norm of these sections $s_m$, we introduce the
following notation. Let $j: \Delta_\alpha \hookrightarrow \RR^d_+$
be the inclusion map, and $t_i$ the standard $i^{th}$ coordinate
functions of $\RR^d$. Then the \emph{lattice distance} of $x \in
\Delta_\alpha$ to the $i^{th}$ facet of $\Delta_\alpha$ is
$l_i(x)=j^* t_i(x)$. On $\Phi^{-1}(\alpha)$ one has
   \begin{equation*}
   \langle z^m, z^m \rangle = |z_1^{m_1}|^2 \cdots |z_d^{m_d}|^2
   e^{-|z|^2},
   \end{equation*}
which implies
   \begin{equation}
   \label{eqn:6.8}
   \langle s_m, s_m \rangle_\alpha = (\Phi_\alpha)^*
   (l_1^{m_1}\cdots
   l_d^{m_d}e^{-l})\ ,
   \end{equation}
where $l=l_1+\cdots+l_d$. As a corollary, we see that the
stability function on $\CC_{st}^d(\alpha)$ is
   \begin{equation}
   \label{eqn:6.9}
   \psi(z) = -|z|^2 + \log{|z^m|^2} - \pi^* \Phi_\alpha^*(\sum m_i
   \log{l_i} -l).
   \end{equation}
Finally by the Duistermaat-Heckman theorem the push-forward of the
symplectic measure on $M_\alpha$ by $\Phi_\alpha$ is the Lebesgue
measure $d\sigma$ on $\Delta_\alpha$, so the $L^2$ norm of $s_m$
is
   \begin{equation}
	\label{eqn:normform}
   \langle s_m, s_m \rangle_{L^2} = \int_{\Delta_\alpha}
   l_1^{m_1} \cdots l_d^{m_d} e^{-l}
   d\sigma.
   \end{equation}

For toric varieties, the ``Bohr-Sommerfeld" issues that we
discussed in \S 5.1 are easily dealt with: Let $\tilde s$ be the
$G$-invariant section, $z_1^{m_1}\cdots z_d^{m_d}$, of $\LL$, with
$(m_1, \cdots, m_d) \in \Delta_\alpha$. Then $\langle \tilde s,
\tilde s \rangle$ take its maximum on the set $\Phi^{-1}(m_1,
\cdots, m_d)$, and if $(m_1, \cdots, m_d)$ is in the interior of
$\Delta_\alpha$, this set is a Lagrangian torus: an orbit of
$\mathbb T^d$. Moreover, if $s$ is the section of $\LL_\alpha$
corresponding to $\tilde s$, $\langle s, s\rangle$ takes its
maximum on the projection of this orbit in $M_\alpha$, which is
also a Lagrangian submanifold.

\subsection{Canonical affines}
\label{sec:6.3}
We end this section by briefly describing a covering by natural
coordinate charts on $M_\alpha$ -- the canonical affines. (For
more details c.f. \cite{DuP}). Let $v$ be a vertex of $\Delta$.
Since $\Delta$ is a Delzant polytope, $\#I_v = n$ and
$\{\alpha_i\ |\ i \in I_v\}$ is a lattice basis of $\ZZ_G^*$.
Denote by
   \begin{equation}
   \label{eqn:6.10}
   \Delta_v = \{t \in \Delta\ |\ I_t \supset I_v\},
   \end{equation}
the open subset in $\Delta_\alpha$ obtained by deleting all facets
which don't contain $v$ and let
   \begin{equation*}
   Z_v = \Phi_\alpha^{-1}(\Delta_v).
   \end{equation*}
\begin{definition}
The \emph{canonical affines} in $M_{\alpha}$ are the open subsets
   \begin{equation}
   \label{eqn:6.11}
   \mathcal U_v = Z_v/G.
   \end{equation}
\end{definition}
Since $\{\alpha_i\ |\ i \in I_v\}$ is a lattice basis, for $j
\notin I_v$ we have $\alpha_j = \sum c_{j,i}\alpha_i$, where
$c_{j,i}$ are integers. Suppose $\alpha = \sum a_i \alpha_i$, then
$Z_v$ is defined by the equations
   \begin{equation}
   \label{eqn:6.12}
   |z_i|^2 = a_i - \sum_{j \not\in I_v} c_{j,i}|z_j|^2, \qquad i
   \in I_v
   \end{equation}
and the resulting inequalities
   \begin{equation}
   \label{eqn:6.13}
   \sum c_{j,i}|z_j|^2 < a_i.
   \end{equation}
So $\mathcal U_v$ can be identified with the set (\ref{eqn:6.12}).
The set
   \begin{equation*}
   z_i = \left( a_i - \sum_{j \notin I_v} c_{j,i}|z_j|^2 \right)^{1/2}
   \end{equation*}
is a cross-section of the $G$-action on $Z_v$, and the restriction
to this cross-section of the standard symplectic form on $\CC^d$
is $\sqrt{-1}\ \sum_{i \notin I_z} dz_i \wedge d\bar z_i$. So the
reduced symplectic form is
   \begin{equation}
   \omega_\alpha = \sqrt{-1}\ \sum_{j \notin I_v} dz_j \wedge
   d\bar z_j,
   \end{equation}
in other words, the $z_j$'s with $j \notin I_v$ are
\emph{Darboux coordinates} on $\mathcal U_v$.


\section{Stability functions on toric varieties}
\label{sec:7}

\subsection{The general formula}
\label{sec:7.1}
In this section we compute the stability functions for the toric
varieties $M_\alpha$ defined above, with upstairs space $\CC^d$
and upstairs metric (\ref{eqn:metric}). For $z \in M_{st}$ there is a
unique $g \in \exp{\sqrt{-1}\fg}$ such that $g \cdot z \in
\Phi^{-1}(\alpha)$, and by definition, if $s(z)=z^m=\pi^* s_m$,
   \begin{equation}
   \label{eqn:7.1}
   \aligned
   \psi(z) &= \log{\langle s, s\rangle(z)} - \log{\langle s, s\rangle(g\cdot z)}
   \\
   &= -|z|^2+\log{|z^m|^2}+|g\cdot z|^2-\log{|(g\cdot z)^m|^2}.
   \endaligned
   \end{equation}
Moreover, If the circle group $(e^{i\theta}, \cdots, e^{i\theta})$
is contained in $G$, or alternatively, if $v=(1, \cdots, 1) \in
\fg$, or alternatively if $M_\alpha$ can be obtained by reduction
from $\CC\PP^{d-1}$, then
   \begin{equation*}
   |z|^2=\sum \alpha_i(v)|z_i|^2=\langle \Phi(z), v\rangle,
   \end{equation*}
thus
   \begin{equation}
   \label{eqn:7.2}
   |g \cdot z|^2 = \langle \Phi(g\cdot z), v\rangle = \langle
   \alpha, v\rangle,
   \end{equation}
and (\ref{eqn:7.1}) simplifies to
   \begin{equation}
   \label{eqn:7.3}
   \psi(z) = -|z|^2 + \log|z^m|^2 + \alpha(v) - \log|(g\cdot
   z)^m|^2.
   \end{equation}

Given a weight $\beta \in \ZZ_G^*$ let $\chi_\beta: G_\CC \to \CC$
be the character of $G_\CC$ associated to $\beta$. Restricted to
$\exp(\sqrt{-1}\fg)$, $\chi_\beta$ is the map
   \begin{equation}
   \label{eqn:7.4}
   \chi_\beta(\exp{i\xi}) = e^{-\beta(\xi)}.
   \end{equation}
Now note that by (\ref{eqn:7.3}),
   \begin{equation*}
   \aligned
   \psi(z) &= -|z|^2 + \alpha(v) + \log|z^m|^2 -
   \log(\prod \chi_{\alpha_i}(g)^{2 m_i}) |z^m|^2 \\
   & = -|z|^2+\alpha(v) - \log\prod \chi_{\alpha_i(g)^{2m_i}}.
   \endaligned
   \end{equation*}
But $z^m=\pi^* s_m$ for $s_m \in \Gamma_{hol}(\LL_\alpha)$ if and
only if $m$ is in $\Delta_\alpha$, i.e. $\sum m_i \alpha_i =
\alpha$, so we get finally by (\ref{eqn:7.4}), $\prod
\chi_{\alpha_i}(g)^{m_i} = \chi_\alpha (g)$ and
   \begin{equation}
   \label{eqn:7.5}
   \psi(z) = -|z|^2 + \alpha(v) - 2 \log \chi_\alpha(g).
   \end{equation}
Recall now that the map
   \begin{equation*}
   \Phi^{-1}(\alpha) \times \exp(\sqrt{-1}\fg) \to \CC^d_{st}
   \end{equation*}
is bijective, so the inverse of this map followed by projection
onto $\exp(\sqrt{-1}\fg)$ gives us a map
   \begin{equation}
   \label{eqn:7.6}
   \gamma: \CC^d_{st} \to \exp(\sqrt{-1}\fg),
   \end{equation}
and by the computation above we've proved
\begin{theorem}
\label{thm:7.1} The stability function for $M_\alpha$, viewed as a
GIT quotient of $\CC^d$ with the trivial line bundle and the flat
metric (\ref{eqn:metric}), is
   \begin{equation}
   \label{eqn:7.7}
   \psi(z) = -|z|^2 + \alpha(v) - 2(\log \gamma^*\chi_\alpha)(z).
   \end{equation}
\end{theorem}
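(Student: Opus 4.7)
The plan is to evaluate the defining equation \eqref{eqn:3.1} of the stability function on a judiciously chosen holomorphic section of $\LL_\alpha$, and then identify the answer with the character formula on the right-hand side of \eqref{eqn:7.7}. Pick any lattice point $m \in \Delta_\alpha \cap \ZZ^d$ and take the corresponding section $s_m \in \Gamma_{hol}(\LL_\alpha)$ from \eqref{eqn:6.7}; its pullback to $\CC^d_{st}(\alpha)$ is the monomial $z^m$. Then
\begin{equation*}
\psi(z) = \log \langle z^m, z^m\rangle(z) - \log \langle s_m, s_m\rangle_\alpha(\pi(z)).
\end{equation*}
I would evaluate the second (necessarily $G_\CC$-invariant) term at the distinguished point $g \cdot z \in \Phi^{-1}(\alpha)$, where $g = \gamma(z) \in \exp(\sqrt{-1}\fg)$ is the element supplied by \eqref{eqn:7.6}: on $\Phi^{-1}(\alpha)$ the function $\psi$ vanishes by Theorem~\ref{thm:3.5}, so $\log \langle s_m, s_m\rangle_\alpha(\pi(z)) = \log \langle z^m, z^m\rangle(g \cdot z)$. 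Substituting the Bargmann metric \eqref{eqn:metric} reproduces the intermediate formula \eqref{eqn:7.1}. The hypothesis $v = (1, \ldots, 1) \in \fg$ then gives $|w|^2 = \langle \Phi(w), v\rangle$ for all $w \in \CC^d$, whose value on $\Phi^{-1}(\alpha)$ is the constant $\alpha(v)$; applying this to $g \cdot z$ yields \eqref{eqn:7.3}.

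The decisive step is to rewrite $|(g \cdot z)^m|^2$ in character-theoretic language. For $g = \exp(\sqrt{-1}\xi) \in G_\CC$, the inclusion $G_\CC \hookrightarrow K_\CC = (\CC^*)^d$ realizes its $i$-th coordinate as $\chi_{\alpha_i}(g) = e^{-\alpha_i(\xi)}$, a positive real number by \eqref{eqn:7.4}. Hence
\begin{equation*}
|(g \cdot z)^m|^2 = \prod_i \chi_{\alpha_i}(g)^{2 m_i} |z_i|^{2 m_i} = \Bigl(\prod_i \chi_{\alpha_i}(g)^{m_i}\Bigr)^{\!2} |z^m|^2.
\end{equation*}
Since $m \in \Delta_\alpha$ forces $\sum_i m_i \alpha_i = \alpha$, multiplicativity of characters in the weight collapses the product to $\chi_\alpha(g)$, so $\log |(g \cdot z)^m|^2 = 2 \log \chi_\alpha(g) + \log|z^m|^2$. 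Substituted into \eqref{eqn:7.3}, the $\log|z^m|^2$ terms cancel and only $-2\log \chi_\alpha(g) = -2 (\log \gamma^* \chi_\alpha)(z)$ survives, yielding \eqref{eqn:7.7}.

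The main subtlety, rather than a true obstacle, is making the character identification rigorous: one must check that $\chi_{\alpha_i}$ is strictly positive real on $\exp(\sqrt{-1}\fg)$ so that the squared modulus is literally the square of the character (and not of its absolute value), and one must note that the formula so obtained is manifestly independent of the lattice point $m$---this is automatic because $\prod_i \chi_{\alpha_i}(g)^{m_i}$ depends on $m$ only through the fixed weight $\sum_i m_i \alpha_i = \alpha$, consistent with $\psi$ being intrinsically defined.
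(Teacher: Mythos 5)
Your proposal is correct and is essentially the paper's own argument: evaluate the definition of $\psi$ on the monomial section $z^m=\pi^*s_m$, compare the value at $z$ with the value at the point $g\cdot z\in\Phi^{-1}(\alpha)$ (formula \eqref{eqn:7.1}), use the hypothesis $v=(1,\dots,1)\in\fg$ to replace $|g\cdot z|^2$ by $\alpha(v)$ as in \eqref{eqn:7.2}--\eqref{eqn:7.3}, and then collapse $\prod_i\chi_{\alpha_i}(g)^{m_i}$ to $\chi_\alpha(g)$ via $\sum m_i\alpha_i=\alpha$. The only additions are worthwhile bits of bookkeeping the paper leaves implicit (why the reduced norm may be evaluated at $g\cdot z$, positivity of $\chi_{\alpha_i}$ on $\exp(\sqrt{-1}\fg)$, and independence of the choice of $m$).
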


For example for $\CC\PP^{n-1}$ itself with $\CC^n_{st}=\CC^n -
\{0\}$ and $\alpha = 1$, $\gamma(z)=|z|$ and hence
   \begin{equation}
   \label{eqn:7.8}
   \psi(z) = -|z|^2 + 1 + \log{|z|^2}.
   \end{equation}

The formula (\ref{eqn:7.7}) is valid modulo the assumption that
$M_\alpha$ can be obtained by reduction from $\CC\PP^{d-1}$, i.e.
modulo the assumption (\ref{eqn:7.2}). Dropping this assumption we
have to replace (\ref{eqn:7.7}) by the slightly more complicated
formula
   \begin{equation}
   \label{eqn:7.9}
   \psi(z) = -|z|^2 + |\gamma(z)^{-1}z|^2 -
   2(\log\gamma^*\chi_\alpha)(z).
   \end{equation}

\subsection{Stability function on canonical affines}
\label{sec:7.2}
We can make the formula (\ref{eqn:7.7}) more explicit by
restricting to the canonical affines, $\mathcal U_v$, of \S
\ref{sec:6.3}. For any vertex $v$ of $\Delta$ it is easy to see
that
   \begin{equation*}
   \mathcal U_v = \CC^d_{\Delta_v} / G_\CC,
   \end{equation*}
where
   \begin{equation}
   \label{eqn:7.10}
   \CC^d_{\Delta_v} = \{z \in \CC^d\ |\ I_z \supset I_v\}
   \end{equation}
is an open subset of $\CC^d_{st}$. By relabelling we may assume
$I_v = \{1, 2 \cdots, n\}$. Since the relabelling makes $\alpha_1,
\cdots, \alpha_n \in \mathbb \fg^*$ into a lattice basis of
$\ZZ_G^*$, $\alpha_k = \sum c_{k,i}\alpha_i$ for $k > n$, where
$c_{k,i}$ are integers. Let $f_1, \cdots, f_n$ be the dual basis
of the group lattice, $\ZZ_G$, then the map
   \begin{equation}
   \label{eqn:7.11}
   \CC^n \to G_\CC, \quad (w_1, \cdots, w_n) \mapsto
   w_1 f_1 + \cdots + w_n f_n \mod \ZZ_G
   \end{equation}
gives one an isomorphism of $G_\CC$ with the complex torus
$(\CC^*)^n$ and in terms of this isomorphism the $G_\CC$-action
on $\CC^d_{\Delta_v}$ is given by
   \begin{equation*}
   (w_1, \cdots, w_n) \cdot z = \left(w_1 z_1, \cdots, w_n z_n,
   (\prod_{i=1}^n w_i^{c_{n+1, i}})z_{n+1}, \cdots,
   (\prod_{i=1}^n w_i^{c_{d, i}})z_{d}\right).
   \end{equation*}
Now suppose $z \in \CC^d_{\Delta_v}$. Then the system of equations
obtained from (\ref{eqn:6.12}) and (\ref{eqn:7.1}),
   \begin{equation*}
   r_i^2|z_i|^2 + \sum_{k=n+1}^d c_{k,i} (\prod_{j=1}^n
   r_j^{c_{k,i}})^2 |z_k|^2  = a_i, \quad 1 \le i \le n,
   \end{equation*}
has a unique solution, $g=(r_1(z), \cdots, r_n(z)) \in (\RR^+)^n =
\exp{(\sqrt{-1}\fg)}$, i.e.,  the $g$ in (\ref{eqn:7.1}) is $(r_1,
\cdots, r_n)$. Via the identification (\ref{eqn:7.10}) the weight
$\alpha \in \ZZ_G^*$ corresponds by (\ref{eqn:7.11}) to the weight
$(a_1, \cdots, a_n) \in \ZZ^n$ and by (\ref{eqn:7.7}) and
(\ref{eqn:7.9})
   \begin{equation}
   \label{eqn:7.12}
   \psi|_{\CC^d_{\Delta_v}}  = -|z|^2 + \alpha(v) - 2 \sum_i
   a_i \log r_i(z)
   \end{equation}
in the projective case and
   \begin{equation}
   \label{eqn:7.13}
   \psi|_{\CC^d_{\Delta_v}}  = -|z|^2 + \sum_i r_i^2|z_i|^2 +
   \sum_{k>n}|(\prod_{i=1}^n r_i^{c_{k,i}})z_k|^2 - 2\sum_i
   a_i \log{r_i}.
   \end{equation}
in general.

\subsection{Example: The stability function on the Hirzebruch
surfaces}
\label{sec:7.3}
As an example, let's compute the stability function for Hirzebruch
surfaces. Recall that the Hirzebruch surface $H_n$ is the toric
4-manifold whose moment polytope is the polygon with vertices $(0,
0), (0, 1), (1, 1), (n+1, 0)$. By the Delzant construction, we see
that $H_n$ is in fact the toric manifold obtained from the
$\TT^2$-action on $\CC^4$,
   \begin{equation*}
   (e^{i\theta_1}, e^{i\theta_2}) \cdot z = (e^{i\theta_1}z_1,
   e^{i\theta_2}z_2,
   e^{i\theta_1 - i n \theta_2}z_3, e^{i\theta_2} z_4).
   \end{equation*}
By the procedure above, we find the stability function
   \begin{equation*}
   \psi(z) = -|z|^2 - a_1 \log{r_1} -a_2\log{r_2} +a_1+a_2-n
   r_1^{2n} r_2^{2}|z_3|^2,
   \end{equation*}
where $r_1, r_2$ are the solution to the system of equations
   \begin{equation*}
   \aligned
   r_1^2|z_1|^2 &+ r_1^{2n}r_2^2 |z_3|^2 = a_1,\\
   r_2^2|z_2|^2 &- nr_1^{2n}r_2^2 |z_3|^2 + r_1^2 |z_4|^2 = a_2.
   \endaligned
   \end{equation*}
   %


\section{Applications of stability theory to toric varieties}
\label{sec:8}

\subsection{Universal rescaled law on toric varieties}
\label{sec:8.1}
In this section we suppose $\beta \in \Delta_\alpha$ is rational,
and $N$ is large with $N\beta \in \ZZ^d$. One of the main results
in \cite{STZ} is the following universal rescaled law for the
probability distribution function (\ref{eqn:5.6}) on toric
varieties with respect to any Bergman toral metric,
   \begin{equation}
   \label{eqn:8.1}
   \lim_{N \to \infty} (\frac N{\pi})^{n/2}
   \sigma_{N,N\beta}((\frac
   N{\pi})^{n/2}t)= \frac{(\log c/t)^{n/2}}{c\Gamma(n/2 + 1)}.
   \end{equation}
By measure theoretic arguments, they deduce this
from moment estimates, (c.f. \S 4.1 of \cite{STZ})
   \begin{equation}
   \label{eqn:8.2}
   \int_{M_\alpha} x^l d\nu_{N} \to
   \frac{c^{l-1}}{l^{n/2}}, \indent N \to \infty,
   \end{equation}
where $l$ is any positive integer, $\nu_N$ is the push-forward
measure
   \begin{equation*}
   \nu_{N} = \left(\left|(\frac N{\pi})^{-n/4}\phi_{N\beta}
   \right|^2\right)_*
   \left((\frac N{\pi})^{n/2} \nu\right),
   \end{equation*}
with $\phi_{N\beta}=s_{N\beta}/\|s_{N\beta}\|$ and $\nu$ the
pullback of the Fubini-Study form via a projective embedding. By a
simple computation it is easy to see that
   \begin{equation}
   \label{eqn:8.3}
   \int x^l d\nu_{N}(x) = \left(\frac
   N{\pi}\right)^{-\frac{n(l-1)}2}
   \int_{M_\alpha}
   |\phi_{N\beta}|^{2l} \nu
   =\left(\frac N{\pi}\right)^{-\frac{n(l-1)}2}
   m_\alpha(l, \phi_{N\beta}, \nu).
   \end{equation}

Instead of considering the pullback of the Fubini-Study measure we
will consider another natural measure on toric varieties: the
quotient measure induced by the upstairs flat metric. The upstairs
analogue of (\ref{eqn:8.1}) for toric varieties is rather easy to
prove:
\begin{lemma}
\label{lemma:8.1}
For any $l$, the $l^{th}$ moments
   \begin{equation}
   \label{eqn:8.4}
   \left(\frac N{\pi}\right)^{-d(l-1)/2}
   m(l, \frac{z^{N\beta}}{\|z^{N\beta}\|}, d\mu) \to
   \frac{c^{l-1}}{l^{d/2}} \qquad (N \to \infty).
   \end{equation}
\end{lemma}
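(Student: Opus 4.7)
The plan is to reduce the calculation to an explicit product of one-dimensional Gaussian integrals, exploiting the factorization afforded by the Bargmann metric $\langle 1,1\rangle = e^{-|z|^2} = \prod_i e^{-|z_i|^2}$, the monomial $|z^{N\beta}|^{2l} = \prod_i |z_i|^{2lN\beta_i}$, and the symplectic volume $\omega^d/d!$ on $\CC^d$. Passing to polar coordinates on each complex factor, the radial integrals evaluate to Gamma functions, yielding, for an explicit constant $C_0$ depending only on $d$,
\begin{equation*}
\|z^{N\beta}\|^2 = C_0 \prod_{i=1}^d \Gamma(N\beta_i+1), \qquad m(l, z^{N\beta}, d\mu) = C_0\, l^{-lN|\beta|-d}\prod_{i=1}^d \Gamma(lN\beta_i+1),
\end{equation*}
where $|\beta| := \sum_i \beta_i$. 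Dividing the second by the $l^{th}$ power of the first produces an exact closed form for $m(l, z^{N\beta}/\|z^{N\beta}\|, d\mu)$ as a constant multiple of $l^{-lN|\beta|-d} \prod_i \Gamma(lN\beta_i+1)/\Gamma(N\beta_i+1)^l$.

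Next I apply Stirling's approximation $\Gamma(x+1)\sim\sqrt{2\pi x}\,(x/e)^x$ factor by factor. The decisive cancellation is that the exponential pieces $(N\beta_i)^{lN\beta_i}e^{-lN\beta_i}$ in numerator and denominator of each Gamma ratio match identically, yielding
\begin{equation*}
\frac{\Gamma(lN\beta_i+1)}{\Gamma(N\beta_i+1)^l} \sim l^{lN\beta_i + 1/2}\,(2\pi N\beta_i)^{-(l-1)/2}.
\end{equation*}
Taking the product over $i = 1, \ldots, d$, the huge factor $l^{lN|\beta|}$ cancels exactly against the prefactor $l^{-lN|\beta|}$, and what survives is a power of $l$, a power of $N$ of order $N^{-d(l-1)/2}$, and a fixed constant depending only on $\beta$ and $d$.

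Multiplying through by the rescaling factor $(N/\pi)^{\mp d(l-1)/2}$ absorbs the remaining $N$-dependence, and the surviving expression can be rewritten in the form $c^{l-1}/l^{d/2}$ with $c = c(\beta,d)$ read off from the $\pi$-, $2$-, and $\beta_i$-dependence of the terms that remain. There is no real conceptual obstacle; the argument is a direct Gaussian-plus-Stirling computation, and the main content is simply careful bookkeeping of constants. The reason the proof is so short --- compared with the downstairs analog (8.2) of \cite{STZ} --- is precisely that the flat upstairs metric on $\CC^d$ has a product structure that makes every integral factorize, whereas the toric variety $M_\alpha$ with its quotient metric does not, forcing a genuinely higher-dimensional Laplace-type analysis downstairs.
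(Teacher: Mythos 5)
Your strategy---factorizing the integrals over $\CC^d$ into one--dimensional radial integrals, evaluating these as Gamma functions, and finishing with Stirling---is exactly the ``direct computation'' the paper has in mind (its own proof of Lemma \ref{lemma:8.1} is only a pointer to \cite{GuW}), and your Gamma--ratio asymptotics $\Gamma(lN\beta_i+1)/\Gamma(N\beta_i+1)^l \sim l^{lN\beta_i+1/2}(2\pi N\beta_i)^{-(l-1)/2}$ is correct. There is, however, a concrete normalization error which flips the power of $N$ and so makes your argument prove a statement incompatible with (\ref{eqn:8.4}) as written. The section $z^{N\beta}$ lives in $\LL^N$, and by Remark \ref{remark:3.1} and \S\ref{sec:6.1} the Hermitian metric on $\LL^N$ is the $N$-th tensor power of the Bargmann metric, $\langle 1,1\rangle_N=e^{-N|z|^2}$, not $e^{-|z|^2}$. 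Your two closed forms implicitly use the fixed metric; with the correct one (writing $|\beta|=\sum_i\beta_i$) they read $\|z^{N\beta}\|^2=C_0\,N^{-N|\beta|-d}\prod_i\Gamma(N\beta_i+1)$ and $m(l,z^{N\beta},d\mu)=C_0\,(lN)^{-lN|\beta|-d}\prod_i\Gamma(lN\beta_i+1)$, i.e.\ yours are missing powers of $N$ that a constant $C_0$ ``depending only on $d$'' cannot absorb. This is not cosmetic: with your convention the normalized moment \emph{decays} like $(N/\pi)^{-d(l-1)/2}$, so multiplying by $(N/\pi)^{-d(l-1)/2}$ as the lemma requires gives limit $0$ for every $l\ge 2$; the lemma needs growth like $(N/\pi)^{+d(l-1)/2}$. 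The hedge ``$\mp$'' in your last paragraph is precisely this unresolved sign, and it cannot be left open.

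The fix is immediate: carry the $N$-powers through. The quotient becomes $C_0^{1-l}\,l^{-lN|\beta|-d}\,N^{d(l-1)}\prod_i\Gamma(lN\beta_i+1)/\Gamma(N\beta_i+1)^l$, and the same Stirling cancellation (legitimate factor by factor since $\beta$ is an interior point, so each $\beta_i>0$) yields
\begin{equation*}
m\Bigl(l,\tfrac{z^{N\beta}}{\|z^{N\beta}\|},d\mu\Bigr)\sim c^{l-1}\,l^{-d/2}\left(\frac N{\pi}\right)^{d(l-1)/2},
\end{equation*}
with $c$ an explicit positive constant depending on the $\beta_i$ and the volume normalization, which is exactly (\ref{eqn:8.4}). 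As a check on the sign: the normalized density $|z^{N\beta}|^2e^{-N|z|^2}/\|z^{N\beta}\|^2$ concentrates on a tube of width $N^{-1/2}$ in the $d$ real directions transverse to the torus $\{|z_i|^2=\beta_i\}$, so its supremum is of order $N^{d/2}$ and the $l$-th moment must grow like $N^{d(l-1)/2}$; equivalently, only this sign is compatible with deriving (\ref{eqn:8.2}) from the lemma via Proposition \ref{prop:5.4}, which is the purpose of \S\ref{sec:8.1}.
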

\begin{proof}
See \cite{GuW}, or by direct computation.
\end{proof}

Thus we can apply proposition \ref{prop:5.4} to derive
(\ref{eqn:8.2}) from (\ref{eqn:8.4}). By (\ref{eqn:5.7}) and
(\ref{eqn:4.8}),
   \begin{equation*}
   m_{\alpha}(l, \frac{s_{N\beta}}{\|s_{N\beta}\|},
   \frac{\omega_{\alpha}^n}{n!})
   \sim l^{-m/2} \left(\frac N{\pi}\right)^{m(l-1)/2} m(l,
   \frac{z^{N\beta}}{\|z^{N\beta}\|}, \frac{\omega^d}{d!}).
   \end{equation*}
Thus
   \begin{equation*}
   \left( \frac N{\pi}\right)^{-n(l-1)/2} m_\alpha(l, \frac
    {s_{N\beta}}{\|s_{N\beta}\|}, \frac{\omega_\alpha^n}{n!})
   \to \frac{c^{l-1}}{l^{n/2}}
   \end{equation*}
as $N \to \infty$ for all $l$. This together with the measure
theoretic arguments alluded to above implies the distribution law
(\ref{eqn:8.1}) for the special volume form $V\mu_\alpha$ on $M_\alpha$ associated to the metric (\ref{eqn:metric}).

\begin{remark}
Here we only consider the case when $\beta$ is an interior point
of the Delzant polytope, which corresponds to the case $r=0$ in
\cite{STZ}. However, one can modify the arguments above slightly
to show the same result for general $r$ and $N\beta$ replaced by
$N\beta + o(1)$.
\end{remark}

\subsection{Spectral properties of toric varieties}
\label{sec:8.2}
As we have seen, the stability theory derived in \S1-\S5 is
particularly useful for toric varieties $M_\alpha$, since the
upstairs space is the complex space, $\CC^d$, the Lie group $G$ is
abelian, its action on $\CC^d$ is linear, and the $G$-invariant
sections of $\LL$ are just linear combinations of monomials. As a
consequence, the expressions (\ref{eqn:1.9}), (\ref{eqn:1.13}),
(\ref{eqn:1.18}) etc. are relatively easy to compute.

For example, consider the density of states
   \begin{equation*}
   \mu_N = \sum \langle s_{N,i}, s_{N,i}\rangle \mu_{red},
   \end{equation*}
where $\{s_{N,i}\}$ is an orthonormal basis of $\Gamma_{hol}(\LL^N)$,
then by proposition \ref{prop:5.3},
   \begin{equation*}
   \int_{M_\alpha} f\mu_N \sim \left(\frac N{\pi}\right)^{-m/2}
   \int_{\CC^d} \frac{\pi^*f}{\pi^*V}\chi \sum\langle \pi^* s_{N,i},
   \pi^* s_{N,i}\rangle \mu.
   \end{equation*}
The right hand side has a very simple asymptotic expansion in
terms of Stirling numbers of the first kind (See \cite{GuW}, \cite{Wan}) and
from this and the results of \S \ref{sec:5.4} one gets an
alternative proof of theorem 1.1 of \cite{BGU}:
\begin{theorem}
\label{thm:8.3} There exists differential operators $P_i(x,D)$ of
order $2i$ such that
   \begin{equation*}
   \mu_N(f) \sim \sum_i N^{d-m-i} \int P_i(x,D)f(x)dx, \quad N \to \infty.
   \end{equation*}
\end{theorem}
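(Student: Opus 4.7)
The plan is to combine Proposition~\ref{prop:5.3}, which reduces the downstairs density of states on $M_\alpha$ to the upstairs $G$-invariant density of states on $\CC^d$, with an explicit computation of the latter using the monomial basis and Mellin transform/Euler--Maclaurin techniques. Since the upstairs space is flat $\CC^d$ with the Bargmann metric, every step can be carried out by hand.

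First I would invoke Proposition~\ref{prop:5.3} to write
\begin{equation*}
\int_{M_\alpha} f \, \mu_N \sim \Bigl(\frac{N}{\pi}\Bigr)^{-m/2} \int_{\CC^d} \chi \, \frac{\pi^* f}{\pi^* V} \, \mu_N^G,
\end{equation*}
where $\mu_N^G = \sum_i \langle \pi^* s_{N,i}, \pi^* s_{N,i}\rangle \mu$ is the upstairs $G$-invariant measure. By (\ref{eqn:6.6}), an orthogonal basis of $\Gamma_{hol}(\LL^N)^G$ is given by the monomials $\{z^m : m \in N\Delta_\alpha \cap \ZZ^d\}$, and the norms $\|z^m\|^2$ are explicit Gamma-function products by (\ref{eqn:normform}) and the corresponding upstairs computation. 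Thus
\begin{equation*}
\mu_N^G = \sum_{m \in N\Delta_\alpha \cap \ZZ^d} \frac{|z^m|^2 e^{-N|z|^2}}{\|z^m\|^2} \, \mu.
\end{equation*}

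Next I would analyze each summand. For a fixed lattice point $m$, the integrand $|z^m|^2 e^{-N|z|^2}$ concentrates at radii $|z_j|^2 = m_j/N$, and applying the Laplace method in each radial direction gives an asymptotic expansion of
\begin{equation*}
\int_{\CC^d} \chi \, \frac{\pi^* f}{\pi^* V} \, \frac{|z^m|^2 e^{-N|z|^2}}{\|z^m\|^2} \, \mu
\end{equation*}
in inverse powers of $N$, whose coefficients are evaluations of derivatives of $\pi^* f / \pi^* V$ at the concentration locus $\Phi^{-1}(m)$, integrated over the $G$-orbit there. This is essentially the content of the Mellin transform calculations in \cite{GuW} and \cite{Wan}. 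Summing over lattice points $m \in N\Delta_\alpha \cap \ZZ^d$ and applying the Euler--Maclaurin formula for convex lattice polytopes from \cite{GuS06} converts the Riemann sum into an integral over $\Delta_\alpha$ plus a full expansion in $1/N$ involving boundary contributions and differential operators acting on the integrand along $\Delta_\alpha$.

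Transferring back via the Duistermaat--Heckman identification of the push-forward $(\Phi_\alpha)_* (\omega_\alpha^n/n!) = d\sigma$ on $\Delta_\alpha$ rewrites the polytope integrals as integrals over $M_\alpha$, and the differential operators in $\sigma$-coordinates become differential operators in the natural coordinates on $M_\alpha$ (cf.\ the canonical affines of \S\ref{sec:6.3}, where $z_j$ with $j \notin I_v$ are Darboux coordinates). A term-counting check then shows that the operator $P_i$ has order $2i$, because each step in the Laplace expansion contributes a factor $N^{-1}$ together with a second-order derivative, while each Euler--Maclaurin correction contributes a factor $N^{-1}$ with an additional pair of derivatives. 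Combining with the prefactor $(N/\pi)^{-m/2}$ from Proposition~\ref{prop:5.3}, and noting $d - m = n$, shifts the leading power to $N^{n-1} = N^{d-m-1}$ as required.

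The main obstacle I expect is bookkeeping: one must verify that the Laplace expansion (upstairs, at each lattice point) and the Euler--Maclaurin expansion (over the polytope) assemble into an expansion whose $i^{\mathrm{th}}$ coefficient is genuinely the integral of a differential operator of order exactly $2i$ applied to $f$, and not merely a distribution. For this, one uses that $f$ is $G$-invariant so that the fiber integrals over $G \cdot p$ produce smooth functions on $M_\alpha$, and one identifies the differential operators by carefully matching the Taylor-expansion computation with $\pi^* V$ appearing in the denominator; the volume factor $V$ cancels the $m$-dependent Jacobian coming from the change of variables $z \mapsto (|z_j|^2)$, ensuring that all remainders are smooth. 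Once this matching is carried out, the expansion agrees term-by-term with Theorem~1.1 of \cite{BGU}.
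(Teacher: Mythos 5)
Your proposal is essentially the paper's own argument: the paper proves Theorem~\ref{thm:8.3} exactly by combining Proposition~\ref{prop:5.3} with the Mellin-transform/Stirling-number computation of the upstairs $G$-invariant density of states from \cite{GuW}, \cite{Wan} and the Euler--Maclaurin formula of \cite{GuS06}, together with the results of \S\ref{sec:5.4}. One small correction to your bookkeeping: in Proposition~\ref{prop:5.3} the basis $\{s_{N,i}\}$ is orthonormal \emph{downstairs}, so the denominators in your displayed formula for $\mu_N^G$ should be the downstairs norms $\|s_m\|^2_{red}$ rather than the upstairs norms $\|z^m\|^2$; the two normalizations differ by section-dependent ratios whose $1/N$-expansions are supplied by (\ref{eqn:4.8}) and Theorem~\ref{thm:5.5}, which is exactly why the paper invokes \S\ref{sec:5.4} at this step.
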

In this way the coefficients of the downstairs density of states
asymptotics can be computed explicitly by the coefficients of the
asymptotic expansion of the invariant upstairs density of states
asymptotics -- the relation of the leading terms is given in
theorem \ref{thm:5.5}, and the other coefficients depend on the
asymptotics of the Laplace integral (\ref{eqn:4.1}) together with
the value of the stability function near $\Phi^{-1}(0)$. Similarly
theorem 1.2 of \cite{BGU} can be derived from the results of \S
\ref{sec:5.2} and upstairs analogues of these results in
\cite{GuW}.

\subsection{General toric K\"ahler metrics on the open stratum of a toric variety}
\label{sec:8.3}

In this section we will briefly indicate how the results of \S 8.1 and \S 8.2 can be extended to a much larger class of metrics on the toric variety above. We will confine ourselves, however, to describing these results on the open $K_{\CC}$ stratum of the toric variety and we hope to deal elsewhere with the implicit boundary conditions needed to describe those metrics on the open $K_{\CC}$ which would extend across the lower dimensional strata in the toric variety. In the GIT description of the toric variety above, 
$$M = (\CC^d)_{st}/G_{\CC},$$
the open stratum is the image in $\CC^d_{st}/G_{\CC}$ of the complex torus $(\CC^{*})^d$. Now let $\omega$ be any $K$-invariant K\"ahler form on $(\CC^{*})^d$. Then if the action of $K$ on $(\CC^{*})^d$ is Hamiltonian, there exists a $K$-invariant potential function, say $F$, for $\omega$, i.e.,
\begin{equation}
\label{eqn:8.5}
\omega = 2 \sqrt{-1} \partial \bar{\partial} F,
\end{equation}
and via the identification
$$(\CC^{*})^d = \CC^d/ 2\pi\sqrt{-1} \ZZ^d,$$
$F$ satisfies, by $K$-invariance, 
$$F(x + \sqrt{-1}y) = F(x),$$
and by the K\"ahler condition, $F$ is strictly convex as a function of $x$. Moreover, the moment map associated with the action of $K$ on $\CC^d/2\pi\sqrt{-1}\ZZ^d$ is just the Legendre transform, $\frac{\partial F}{\partial x}$. (For proofs of these assertions, see \cite{Gui94a}, \S 4.) Thus if $L$ is as in (\ref{eqn:6.2}) the dual $L: \mathfrak{k}^{*} \rightarrow \mathfrak{g}^{*}$ of the inclusion of $\mathfrak{g}$ into $\mathfrak{k}$, the moment map associated with the action of $G$ on $\CC^d/2\pi\sqrt{-1}\ZZ^d$ is $L\circ\frac{\partial F}{\partial x}.$ 

Let's now consider, as in (\ref{eqn:metric}), the trivial line bundle $\LL = \CC^d \times \CC$ over $\CC^d$ and lets $G$ act on this bundle by acting on $\CC$ by the weight $\alpha \in \ZZ_{G}$. Let us replace, however, the Hermitian inner product in (\ref{eqn:metric}) by the inner product
\begin{equation}
\label{eqn:metric_new}
\langle 1, 1\rangle = e^{-2 F}.
\end{equation}
Then for $k \in \ZZ^d_{+}$ with $L(k) = \alpha$, and $z = e^{x + \sqrt{-1}y}$,
$$\langle z^k, z^k \rangle = e^{2(k\cdot x - F)}.$$
Recall now that $(\frac{\partial F}{\partial x})^{-1}$ is also a Legendre transform, i.e., 
$$\frac{\partial F}{\partial x}(x) = t \iff x = \frac{\partial G}{\partial t}(t),$$
where 
\begin{equation}
\label{eqn:InverseLegTransform}
G(t) = x \cdot t - F(x).
\end{equation}
Thus 
\begin{equation}
\label{eqn:ILTrelation}
((\frac{\partial F}{\partial x})^{-1})^{*} \exp 2(k\cdot x - F(x)) = \exp 2(k\cdot \frac{\partial G}{\partial t} - t \cdot \frac{\partial G}{\partial t} + G(t))
\end{equation}
and its restriction to $L^{-1}(\alpha)$ with $t_i|_{L^{-1}(\alpha)} = \ell_i$ is 
\begin{equation}
\label{eqn:rho_k}
\rho_k = \exp 2 (\sum (\ell_i(k) - \ell_i) \frac{\partial G}{\partial t_i}(\ell_1,\ldots, \ell_d) + G(\ell_1,\ldots, \ell_d)).
\end{equation}
In particular, the function
\begin{equation}
\label{eqn:potfngen}
\phi^{*}\Log \rho_k
\end{equation}
is a potential function for the reduced K\"ahler metric on the image of $(\CC^{*})^d$ in $\CC^d_{st}/G_{\CC}$. Notice in particular that for the flat K\"ahler metric on $\CC^d$, $F(x) := F_0(x) = e^{x_1} + \ldots + e^{x_d}, \frac{\partial F_0}{\partial x_i} = e^{x_i}, \frac{\partial G_0}{\partial t_i} = \, \Log \, t_i,$ and 
$$G_0(t_1, \ldots , t_d) = \sum x_i e^{x_i} - F(x) = \sum t_i \, \Log \, t_i - t_i,$$
so that 
\begin{equation}
\label{eqn:logrho}
\frac 12 \, \Log \, \rho_k = \sum (\ell_i(k) - \ell_i) \, \Log \, \ell_i + \ell_i \, \Log \, \ell_i - \ell_i = \sum \ell_i(k)\, \Log \, \ell_i - \ell_i
\end{equation}
(compare with equations (\ref{eqn:6.8}) through (\ref{eqn:normform})). The discrepancy of ``$\frac 12$" is accounted for by the presence of the ``2" in (\ref{eqn:8.5}).

We now note that the stability function on $\CC^d/2\pi\sqrt{-1}\ZZ^d$ associated with the K\"ahler form is
$$F(x) - (\pi^{*}\circ\phi^{*} \rho_k)(x +\sqrt{-1} y),$$
or, by (\ref{eqn:7.6}), 
\begin{equation}
\label{eqn:newpot}
F(x) - \Phi^{*}\rho_k(\gamma^{-1}(x))
\end{equation}
and that given this expansion for the potential function one can generalize the results of \S 8.1 - \S 8.2 on probability distribution functions and spectral measures to these more general reduced K\"ahler metrics by means of the techniques described in \S 5.

In the discussion above we have confined ourselves to reduced metrics on the open stratum in $\CC^d_{st}/G_{\CC}$. We note in passing, however, that if we add small $K$-invariant perturbations $P$ to the euclidean $F_0$ above, such that the support of $P$ is compact in $(\CC^{*})^d$, then $F := F_0 + P$ gives rise to a reduced metric on all of $\CC^d_{st}/G_{\CC}$ for which the results of  \S 6 and \S 7 above are true, by the same arguments, using the calculations above. We will discuss elsewhere the boundary assumptions necessary on more general $F$ in order for these results to extend to the whole of $\CC^d_{st}/G_{\CC}$. 

As noted already towards the end of the introduction to this paper, asymptotic results as in \S 6-8 for broad classes of toric K\"ahler metrics on $\CC^d_{st}/G_{\CC}$ have been discussed (explicitly) in \cite{STZ} and (implicitly) in \cite{SoZ}.


\section{Martin's construction}
\label{sec:9}

The non-abelian generalizations of toric varieties are
``spherical" varieties, and the simplest examples of these are
coadjoint orbits and varieties obtained from coadjoint orbits by
symplectic cuts. In the remainder of this paper we apply stability
theory to the coadjoint orbits of the unitary group $\U(n)$. It is
well known that the coadjoint orbits of $\U(n)$ can be identified
with the sets of isospectral Hermitian matrices $\mathcal
H(\lambda) \subset \mathcal H(n)$, i.e.,  Hermitian matrices with
fixed eigenvalues $\lambda_1 \ge \lambda_2 \ge \cdots \ge
\lambda_n$. For $\lambda_1 = \cdots = \lambda_{n-1} > \lambda_n$,
$\mathcal H(\lambda)$ is $\CC\PP^{n-1}$ which is a toric manifold.
Thus the first non-toric case is given by $\lambda_1 = \cdots =
\lambda_k > \lambda_{k+1} = \cdots = \lambda_n$, $1 < k < n-1$, in
which case $\mathcal H(\lambda)$ is the complex Grassmannian
$Gr(k, \CC^n)$.

\subsection{GIT for Grassmannians}
\label{sec:9.1}
Suppose $k < n$. It is well known that the complex Grassmannian
$Gr(k,\CC^n)$ can be realized as the quotient space of $\CC^{kn}$
by symplectic reduction or as a GIT quotient as follows:

Let $M=\mathfrak M_{k,n}(\CC) \simeq \CC^{kn}$ be the space of
complex $k \times n$ matrices. We equip $\CC^{kn}$ with its
standard K\"ahler metric, the standard trivial line bundle $\CC
\times \CC^{kn} \to \CC^{kn}$, and the standard Hermitian inner
product on this line bundle,
   \begin{equation}
   \label{eqn:9.1}
   \langle 1, 1 \rangle(Z) = e^{-\tr ZZ^*}.
   \end{equation}

Now let $G=\U(k)$ act on $\mathfrak M_{k,n}$ by left
multiplication. This action preserves the inner product
(\ref{eqn:9.1}), and thus preserves the K\"ahler form
$\sqrt{-1}\partial \bar
\partial \tr ZZ^*.$ It is not hard to see that it is a
Hamiltonian action with moment map
   \begin{equation}
   \label{eqn:9.2}
   \Phi:  \mathfrak M_{k,n} \to \mathcal H_k, \qquad Z \mapsto
   ZZ^*,
   \end{equation}
where $\mathcal H_k$ is the space of $k \times k$ Hermitian
matrices. Here we identify $\mathcal H_k$ with $\sqrt{-1}\mathcal
H_k =Lie(\U(k))$, and identify $\mathcal H_k$ with
$Lie(\U(k))^*=\mathcal H_k^*$ via the Killing form. Notice that
the identity matrix $I$ lies in the annihilator of the commutator
ideal,
   \begin{equation*}
   [\mathcal H_k, \mathcal H_k]^0 = \{a \in \mathcal H_k^*\ |\ \langle
   [h_1, h_2], a \rangle=0 \mbox{\ for\ all\ } h_1, h_2 \in
   \mathcal H_k\},
   \end{equation*}
so $\Phi-I$ is also a moment map, and it's clear that the
reduced space
   \begin{equation*}
   M_{red} = \Phi^{-1}(I)/G
   \end{equation*}
is the Grassmannian $Gr(k, \CC^n)$.

On the other hand, the complexification of $\U(k)$ is $GL(k,
\CC)$, and it's not hard to see that the set of stable points,
$M_{st}$, is exactly the set of $k \times n$ matrices $A \in M$
which have rank $k$, and that the quotient $M_{st}/GL(k, \CC)$ is
again $Gr(k, \CC^n)$ . This gives us the GIT description of $Gr(k,
\CC^n)$.

As for the reduced line bundle, $\LL_{red}$, on $M_{red}$, this is
obtained from the trivial line bundle on $M_{st}$ by ``shifting"
the action of $GL(k,\CC)$ on the trivial line bundle in conformity
with the shifting, ``$\Phi \Rightarrow \Phi-I$", of the moment
map, i.e. by letting $GL(k, \CC)$ act on this bundle by the
character
   \begin{equation*}
   \gamma: GL(k,\CC) \to \CC^*, \gamma(A) = \det(A).
   \end{equation*}

\subsection{Martin's reduction procedure}
\label{sec:9.2}
For general coadjoint orbit of $\U(n)$, Shaun Martin showed that
there is an analogous GIT description. Since he never published
this result, we will roughly outline his argument here, focusing
for simplicity on the case $\lambda_1 > \cdots > \lambda_n$.

Let
   \begin{equation*}
   M = \mathfrak M_{1,2}(\CC) \times \mathfrak M_{2, 3}(\CC)
   \times \cdots \times\mathfrak M_{n-1, n}(\CC).
   \end{equation*}
Then each component of $M$ is a linear symplectic space, and $M$
is just the linear symplectic space $ \CC^{(n-1)n(n+1)/3}$ \ with
standard K\"ahler form $\omega=-\sqrt{-1}\partial \bar \partial
\log \rho$, where $\rho$ is the potential function
   \begin{equation*}
   \rho(Z) = \exp(-\sum_{i=1}^{n-1}\tr Z_i Z_i^*).
   \end{equation*}
Consider the group
   \begin{equation*}
   G=\U(1) \times \U(2) \times \cdots \times \U(n-1)
   \end{equation*}
acting on $M$ by the recipe:
   \begin{equation}
   \label{eqn:9.3}
   \tau_{(U_1, \cdots, U_{n-1})} (Z_1,  \cdots, Z_{n-1}) =
   (U_1 Z_1 U_2^*,  \cdots, U_{n-2}Z_{n-2}U_{n-1}^*,
   U_{n-1}Z_{n-1}).
   \end{equation}
\begin{lemma}
\label{lemma:9.1}
The action above is Hamiltonian with moment map
   \begin{equation}
   \label{eqn:9.4}
   \Phi(Z_1, \cdots, Z_{n-1}) = (Z_1 Z_1^*, Z_2 Z_2^* - Z_1^*Z_1,
   \cdots, Z_{n-1}Z_{n-1}^* - Z_{n-2}^* Z_{n-2}).
   \end{equation}
\end{lemma}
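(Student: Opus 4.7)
The plan is to decompose the action of $G$ into elementary Hamiltonian pieces — the left and right multiplication actions of unitary groups on matrix spaces — whose moment maps are either given in the excerpt or easily derived from (\ref{eqn:9.2}), and then to sum them using the standard compatibility of moment maps with products.

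First, I would note that $M = \prod_{j=1}^{n-1} \mathfrak{M}_{j,j+1}(\CC)$ is a product of linear symplectic spaces with standard K\"ahler forms as in (\ref{eqn:9.1}), and that each $U(i)$ factor of $G$ acts on at most two of the matrix factors: it acts on $Z_i$ by left multiplication (when $i \leq n-1$) and on $Z_{i-1}$ by right multiplication through $U_i^{*}$ (when $i \geq 2$). For $\xi = (\xi_1, \ldots, \xi_{n-1}) \in \fg = \bigoplus_{i=1}^{n-1} \mathfrak{u}(i)$, the generated vector field on the $j$-th factor is $\xi_j Z_j - Z_j \xi_{j+1}$ (with the convention $\xi_n = 0$). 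Since the symplectic form on $M$ splits as a sum over matrix factors, the moment map condition $d\langle \Phi, \xi\rangle = \iota_{\xi_M} \omega$ also splits, and it suffices to identify the moment maps for the two elementary actions on each $\mathfrak{M}_{j,j+1}(\CC)$ and add.

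Next, I would record the two building-block formulas. The left action $Z \mapsto UZ$ of $U(k)$ on $\mathfrak{M}_{k,n}(\CC)$ has moment map $Z \mapsto ZZ^{*}$ by (\ref{eqn:9.2}). For the right action $Z \mapsto ZU^{*}$ of $U(k)$ on $\mathfrak{M}_{m,k}(\CC)$, the generating vector field for $\xi \in \mathfrak{u}(k)$ is $-Z\xi$, and a direct calculation of $\iota_{-Z\xi}\omega$ using the Hermitian structure $\tr(ZZ^{*})$ shows that the moment map is $Z \mapsto -Z^{*}Z$; equivalently, this can be seen by transposing and conjugating to reduce to the left action case, with the resulting sign coming from $\xi \mapsto -\xi$ on $\mathfrak{u}(k)$. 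Assembling the contributions, the $i$-th slot of $\Phi$ receives $Z_i Z_i^{*}$ from the left action of $U_i$ on $Z_i$ (present for $1 \leq i \leq n-1$) and $-Z_{i-1}^{*}Z_{i-1}$ from the right action of $U_i$ on $Z_{i-1}$ (present for $2 \leq i \leq n-1$). Summing these reproduces (\ref{eqn:9.4}) exactly, with the first slot $Z_1 Z_1^{*}$ unpaired because $U(1)$ does not touch any $Z_0$, and the last slot $Z_{n-1}Z_{n-1}^{*} - Z_{n-2}^{*}Z_{n-2}$ following the same pattern.

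Finally, I would check consistency and equivariance: the $i$-th component lies in $\mathcal H_i \simeq \mathfrak{u}(i)^{*}$ because $Z_i$ has size $i \times (i+1)$ and $Z_{i-1}$ has size $(i-1) \times i$, so both $Z_i Z_i^{*}$ and $Z_{i-1}^{*}Z_{i-1}$ are $i \times i$ Hermitian; and equivariance under $G$ is immediate from $(UZ)(UZ)^{*} = U(ZZ^{*})U^{*}$ together with the analogous identity $(ZV^{*})^{*}(ZV^{*}) = V(Z^{*}Z)V^{*}$, which are precisely the coadjoint actions on the respective $\mathcal H_i$ slots. The main obstacle is not conceptual but bookkeeping: keeping track of signs and of the identification $\mathfrak{u}(k)^{*} \cong \mathcal H_k$ used in the excerpt (which introduces the usual $\sqrt{-1}$ ambiguity between skew-Hermitian and Hermitian matrices), so that the sign in the right-action moment map formula is consistent with the sign chosen in (\ref{eqn:9.2}). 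Once that convention is fixed, the rest of the proof is a direct assembly.
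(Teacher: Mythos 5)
Your proposal is correct, and it arrives at (\ref{eqn:9.4}) by a slightly different organization than the paper. The paper proves the lemma in one shot: for an arbitrary $H=(H_1,\ldots,H_{n-1})\in\fg$ it writes down the one-parameter group $\exp(\sqrt{-1}tH_i)Z_i\exp(-\sqrt{-1}tH_{i+1})$, computes $\iota_{v_H}dZ_i=\sqrt{-1}(H_iZ_i-Z_iH_{i+1})$, and contracts the vector field with the primitive $\sqrt{-1}\partial\log\rho$ of the K\"ahler form to get $\sum\tr(H_iZ_iZ_i^*-H_{i+1}Z_i^*Z_i)=\langle\Phi(Z),H\rangle$ directly. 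You instead factor the action into elementary left and right multiplications on the individual factors $\mathfrak M_{j,j+1}$, quote (\ref{eqn:9.2}) for the left piece, reduce the right piece to it (by the transpose intertwiner or a direct contraction), and invoke additivity of moment maps for commuting actions on a product; this buys reusability of the building blocks and makes the bookkeeping of which $U(i)$ touches which $Z_j$ transparent, and it is exactly the mechanism that recurs for the quiver moment map in Proposition \ref{prop:11.1}. The only place where you still owe a computation is the right-action formula $Z\mapsto -Z^*Z$: it is not literally stated anywhere in the paper, so either carry out the contraction $\iota_{v_H}(\sqrt{-1}\partial\log\rho)=-\tr(HZ^*Z)$ for $\dot Z=-\sqrt{-1}ZH$, or make the transpose argument precise (the automorphism is $H\mapsto -H^{T}$ on Hermitian generators, and the transpose is harmless under the trace pairing); once that is pinned down, with the sign convention of (\ref{eqn:2.1}) fixed as you note, your assembly reproduces the paper's formula, including the unpaired first slot and the equivariance statement, which the paper leaves implicit.
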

\begin{proof}
Given any $H=(H_1, \cdots, H_{n-1}) \in \mathcal H_1 \times
\cdots \times \mathcal H_{n-1}$, denote by $\mathcal U_H(t)$ the
one parameter subgroup of $G$ generated by $H$, i.e.,
   \begin{equation*}
   \aligned
   \mathcal U_H(t) Z =& \left(\exp{(\sqrt{-1}tH_{1})}Z_{1}
   \exp{(-\sqrt{-1}tH_{2})}, \cdots,\right. \\
   & \
   \left.\exp{(\sqrt{-1}tH_{n-2})}Z_{n-2}\exp{(-\sqrt{-1}tH_{n-1})},
   \exp{(\sqrt{-1}tH_{n-1})}Z_{n-1}\right).
   \endaligned
   \end{equation*}
Let $v_H$ be the infinitesimal generator of this group, then
   \begin{equation*}
   \iota_{v_H}(\sqrt{-1}\partial \log \rho) = -\sqrt{-1} \sum
   \tr((\iota_{v_H}dZ_i)Z_i^*).
   \end{equation*}
Since
   \begin{equation*}
   \iota_{v_H}dZ_i 
   = \left.\frac d{dt}(\exp{(\sqrt{-1}tH_{i})}
    Z_{i}\exp{(-\sqrt{-1}tH_{i+1})})\right|_{t=0}
   =\sqrt{-1}(H_i Z_i -Z_i H_{i+1}),
   \end{equation*}
we see that
   \begin{equation*}
   \iota_{v_H}(\sqrt{-1}\partial \log \rho) = \sum \tr(H_i
   Z_iZ_i^* - H_{i+1}Z_i^*Z_i) = \langle \Phi(Z), H \rangle.
   \end{equation*}
This shows that (\ref{eqn:9.4}) is a moment map of $\tau$.
\end{proof}

Given $a=(a_1, \cdots, a_n) \in \RR_+^n$, let
   \begin{equation*}
   \phi^{-1}(aI) = \Phi^{-1}(a_1 I_1, \cdots, a_{n-1}I_{n-1}),
   \end{equation*}
and let
   \begin{equation*}
   M_a =\Phi^{-1}(aI)/G
   \end{equation*}
be the reduced space at level $(a_1 I_1, \cdots, a_{n-1}I_{n-1})
\in [\fg, \fg]^0$. Consider the residual action of $GL(n, \CC)$ on
$M$,
   \begin{equation}
   \label{eqn:9.5}
   \kappa: GL(n, \CC)\times M \to M, \quad \kappa_A Z =
   (Z_1, \cdots, Z_{n-2}, Z_{n-1}A^{-1}).
   \end{equation}
Then the actions $\kappa$ and $\tau$ commute, and by the same
argument as above we see that $\kappa|_{\U(n)}$ is a Hamiltonian
action with a moment map
   \begin{equation}
   \label{eqn:9.6}
   \Psi: M \to \mathcal H_n, \quad \Psi(Z) = Z_{n-1}^* Z_{n-1} +
   a_n I_n.
   \end{equation}
We thus get a Hamiltonian action of $\U(n)$ on the reduced space
$M_a$ with moment map $\Psi_a: M_a \to \mathcal H_n$, which
satisfies $\Psi \circ i = \Psi_a \circ \pi_0$, where, as usual,
$i: \Phi^{-1}(aI) \hookrightarrow M$ is the inclusion map and
$\pi_0: \Phi^{-1}(aI) \to M_a$ the projection.

\begin{theorem}[\cite{Mar}]
\label{thm:9.2}
$\Psi_a$ is a $\U(n)$-equivariant
symplectomorphism of $M_a$ onto $\mathcal H(\lambda)$, with
$\lambda_i = \sum_{j=i}^n a_j$.
\end{theorem}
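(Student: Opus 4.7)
The plan is to verify the theorem in four steps: (i) the image lies in $\mathcal H(\lambda)$ by a spectral computation along the chain; (ii) $\U(n)$-equivariance passes to the quotient because $\tau$ and $\kappa$ commute; (iii) the descended map $\Psi_a$ is a bijection onto $\mathcal H(\lambda)$, constructed by induction from top to bottom; and (iv) $\Psi_a$ is a symplectomorphism because it is an equivariant diffeomorphism onto a coadjoint orbit with its KKS form, onto which $\U(n)$ acts transitively.

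For step (i), the defining equation $\Phi(Z) = (a_1 I_1, \ldots, a_{n-1}I_{n-1})$ amounts to the recursion
$$Z_i Z_i^{*} = a_i I_i + Z_{i-1}^{*} Z_{i-1}, \qquad i=1,\ldots, n-1, \qquad Z_0 := 0.$$
Since $Z_i Z_i^{*}$ and $Z_i^{*} Z_i$ share the same nonzero spectrum (with $Z_i^{*}Z_i$ carrying one extra zero eigenvalue, as $Z_i$ is $i \times (i+1)$), the recursion gives, by an immediate induction, that $Z_i Z_i^{*}$ has eigenvalues $\lambda_1 - \sum_{j>i} a_j, \ldots, \lambda_i - \sum_{j>i} a_j$ where $\lambda_k := \sum_{j \ge k} a_j$. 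Applying the same shift one more time to $\Psi(Z) = Z_{n-1}^{*}Z_{n-1} + a_n I_n$ yields eigenvalues $\lambda_1, \ldots, \lambda_n$, so the image of $\Phi^{-1}(aI)$ under $\Psi$ lies in $\mathcal H(\lambda)$. For step (ii), a direct check shows that $\tau$ and $\kappa$ commute, so $\kappa$ preserves $\Phi^{-1}(aI)$ and descends to $M_a$; since $\Psi(\kappa_A Z) = A \Psi(Z) A^{*}$ for $A \in \U(n)$, the descended map $\Psi_a$ is $\U(n)$-equivariant with respect to the coadjoint action on $\mathcal H_n \cong \fu(n)^{*}$, and by the standard reduction argument it is the moment map for the residual $\U(n)$-action on $M_a$.

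For step (iii) I would build an inverse to $\Psi_a$ by reversing the chain. Given $H \in \mathcal H(\lambda)$, the matrix $H - a_n I_n$ is positive semi-definite of rank $n-1$, hence writes as $Z_{n-1}^{*} Z_{n-1}$ with $Z_{n-1}$ an $(n-1) \times n$ matrix of rank $n-1$; this factorization is unique up to left multiplication by an element of $\U(n-1)$. Then $Z_{n-1} Z_{n-1}^{*} - a_{n-1} I_{n-1}$ has eigenvalues $\lambda_k - \sum_{j \ge n-1} a_j$ for $k \le n-1$, the last of which vanishes, so it is positive semi-definite of rank $n-2$ and factors as $Z_{n-2}^{*} Z_{n-2}$ with $Z_{n-2}$ of size $(n-2)\times(n-1)$, again unique up to $\U(n-2)$. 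Iterating down to $Z_1$ produces a preimage of $H$ in $\Phi^{-1}(aI)$ which is well-defined modulo $G = \U(1)\times\cdots\times \U(n-1)$. This gives a smooth inverse $\mathcal H(\lambda) \to M_a$ (smoothness of the factorizations follows from the strict interlacing, which is where the assumption $\lambda_1 > \cdots > \lambda_n$, equivalently $a_i > 0$, is used), establishing that $\Psi_a$ is a $\U(n)$-equivariant diffeomorphism. The dimension check $\dim M_a = n(n-1) = \dim \mathcal H(\lambda)$ is consistent.

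For step (iv), since $\mathcal H(\lambda)$ is a $\U(n)$-coadjoint orbit carrying its KKS symplectic form $\omega_\lambda$, on which $\U(n)$ acts transitively, the diffeomorphism $\Psi_a$ transports this transitivity to $M_a$. Writing $X_v$ for the infinitesimal action of $v \in \fu(n)$, the moment map property gives $\iota_{X_v}\omega_a = d\langle \Psi_a, v \rangle = \Psi_a^{*}\iota_{X_v}\omega_\lambda = \iota_{X_v} \Psi_a^{*}\omega_\lambda$ for every $v$; transitivity ensures the vectors $X_v(p)$ span $T_p M_a$ at every $p$, so $\omega_a = \Psi_a^{*}\omega_\lambda$. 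I expect step (iii)---specifically, the smooth dependence of the iterated factorizations on $H$ and the verification that the ambiguity at each stage is exactly absorbed by the corresponding factor $\U(i)$ of $G$, so that the inverse is genuinely well-defined on $M_a$ and not just on the total space $\Phi^{-1}(aI)$---to be the main technical obstacle; the other three steps are essentially bookkeeping once this bijection is in hand.
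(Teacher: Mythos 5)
Your proposal is correct in substance, but it handles the crucial bijectivity step differently from the paper. Steps (i), (ii) and (iv) coincide with the paper's argument: the same spectral bookkeeping along the chain $Z_iZ_i^*=a_iI_i+Z_{i-1}^*Z_{i-1}$ shows the image lies in $\mathcal H(\lambda)$, equivariance is the same direct computation $\Psi(\kappa_A Z)=A\Psi(Z)A^{-1}$, and your contraction argument $\iota_{X_v}(\omega_a-\Psi_a^*\omega_\lambda)=0$ plus transitivity is just an unpacked version of the paper's remark that a moment map is a Poisson map, hence here a symplectomorphism. Where you diverge is injectivity/surjectivity: the paper gets surjectivity from equivariance plus transitivity of $\U(n)$ on $\mathcal H(\lambda)$, and injectivity from the dimension count $\dim M_a=\dim\mathcal H(\lambda)=n(n-1)$, which makes $\Psi_a$ a finite-to-one covering, together with simple connectivity of $\U(n)$-coadjoint orbits; you instead build an explicit inverse by iterated factorizations $H-a_nI_n=Z_{n-1}^*Z_{n-1}$, $Z_iZ_i^*-a_iI_i=Z_{i-1}^*Z_{i-1}$, checking that the $\U(i)$-ambiguity at each stage is exactly the $i$-th factor of $G$. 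Your route is more constructive, makes visible exactly where $a_i>0$ (strictness of $\lambda$) enters, and needs no topological input, but it carries the burden you correctly flag: smooth local choice of the rank-deficient factorizations and the matching of ambiguities with $G$. The paper's route is shorter but leans on the (true, and implicitly used) facts that an equivariant map onto a homogeneous space is a submersion and that flag manifolds are simply connected. Note also that you could shortcut your own technical obstacle: once your factorization argument gives set-theoretic bijectivity, equivariance onto the homogeneous target already makes $\Psi_a$ a submersion between equidimensional manifolds, hence a bijective local diffeomorphism, hence a diffeomorphism, so smoothness of the inverse need not be established by hand.
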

\begin{proof}
First we prove that $\Psi_a$ maps $M_a$ onto the isospectral set
$\mathcal H(\lambda)$. In view of the relation $\Psi \circ i =
\Psi_a \circ \pi_0$, we only need to show Image$(\Psi) =\mathcal
H(\lambda)$. In fact, if $Z_i Z_i^*$ has eigenvalues $(\mu_1,
\cdots, \mu_i)$, then the eigenvalues of $Z_iZ_i^*$ are exactly
$(\mu_1, \cdots, \mu_i, 0)$, so it is straightforward to see that
$Z_2Z_2^*=Z_1Z_1^*+a_2I_2$ has eigenvalues $a_1+a_2, a_2$, and in
general $Z_i Z_i^*$ has eigenvalues $a_1+\cdots+a_i,
a_2+\cdots+a_i, \cdots, a_i$. This proves that $\Psi_a$ maps $M_a$
into $\mathcal H(\lambda)$, and since $G$ acts transitively on
$\mathcal H(\lambda)$, this map is onto.

Next note that by dimension-counting $\dim M_a = \dim \mathcal
H(\lambda)$, so $\Psi_a$ is a finite-to-one covering. Since the
adjoint orbits of $\U(n)$ are simply-connected, we conclude that
this map is also injective, and thus a diffeomorphism.

Since $\Psi_a$ is a moment map, it is a Poisson mapping between
$M_a$ and $\mathcal H(n)$, i.e.,
   \begin{equation*}
   \{ f \circ \Psi_a, g \circ \Psi_a \}_{M_a} = \{f, g\}_{\mathcal
   H(\lambda)} \circ \Psi_a
   \end{equation*}
for any $f, g \in C^\infty(\mathcal H(\lambda))$. Thus $\Psi_a$ is
a symplectomorphism between $M_a$ and $ \mathcal H(\lambda)$.

Finally the $\U(n)$-equivariance comes from the fact that
   \begin{equation*}
   \Psi(U \cdot Z) = (U^{-1})^* Z_{n-1}^* Z_{n-1}U^{-1}+a_n I_n =
   U (Z_{n-1}^*Z_{n-1}+a_nI_n) U^{-1}
   =U \cdot \Psi(Z).
   \end{equation*}
This completes the proof.
\end{proof}

The GIT description of this reduction procedure is now clear:
$$Z=(Z_1, \cdots, Z_{n-1}) \in M_{st}$$ if and only if $Z_i$ is of
rank $i$ for all $i$, and
   \begin{equation*}
   M_a  = M_{st}/G_\CC
   \end{equation*}
with $G_\CC$  the product
   \begin{equation*}
   G_\CC = GL(1, \CC)\times \cdots \times GL(n-1, \CC).
   \end{equation*}

\subsection{Twisted line bundles over $\U(n)-$coadjoint orbits}
\label{sec:9.3}
As in the toric case, reduction at level 0 of the moment map
(\ref{eqn:9.2}) is not very interesting, since the reduced line
bundle is the trivial line bundle. To get the Grassmannian, we
shifted the moment map by the identity matrix. Equivalently, we
``twisted'' the action of $GL(k, \CC)$ on the trivial line bundle
$\CC \times \CC^{kn}$ by a character of $GL(k, \CC)$. It is to
this shifted moment map/twisted action that we applied the
reduction procedure to obtain a reduced line bundle on $Gr(k,
\CC^n)$.

Similarly, for $\U(n)$-coadjoint orbits we will twist the $G_\CC$
action on the trivial line bundle over $M$ by characters of
$G_\CC$. Every character of $G_\CC$ is of the form
   \begin{equation}
   \label{eqn:9.7}
   \gamma = \gamma_1^{m_1} \cdots \gamma_{n-1}^{m_{n-1}},
   \end{equation}
where $\gamma_k(A) = \det (A_k)$ for $A=(A_1, \cdots, A_{n-1})$.
Let
   \begin{equation*}
   \pi_k : M \to \mathfrak M_{k,n}, \quad (Z_1, \cdots, Z_{n-1})
   \to Z_k Z_{k+1} \cdots Z_{n-1}.
   \end{equation*}
Then $\pi_k$ intertwines the action of $G_\CC$ on $M$ with the
standard left action of $\mathcal U(k)$ on $\mathfrak M_{k,n}$,
and intertwines the action $\kappa$ of $\mathcal U(n)$ on $M$ with
the standard right action of $\U(n)$ on $\mathfrak M_{k,n}$. Let
$\LL_k$ be the holomorphic line bundle on $\mathfrak M_{k,n}$
associated with the character
   \begin{equation}
   \label{eqn:9.8}
   \gamma_k: GL(k, \CC) \to \CC^*, \quad A \mapsto \det(A).
   \end{equation}
Then the bundle $\pi^*_k \LL_k$ is the holomorphic line bundle on
$M$ associated with $\gamma_k$ and
   \begin{equation}
   \label{eqn:9.9}
   \LL := \bigotimes_{k=1}^{n-1} (\pi_k^* \LL_k)^{m_k}
   \end{equation}
is the holomorphic line bundle associated with the character
$\gamma$. In particular if $s_k$ is a $GL(k, \CC)$-invariant
holomorphic section of $\LL_k$, then
   \begin{equation}
   \label{9.10}
   (\pi_1^* s_1)^{m_1} \cdots (\pi_{n-1}^* s_{n-1})^{m_{n-1}}
   \end{equation}
is a $G_\CC$-invariant holomorphic section of $\LL$, and all
$G_\CC$-invariant holomorphic sections of $\LL$ are linear
combinations of these sections. Since the representation of $GL(n,
\CC)$ on the space $\Gamma_{hol}(\LL_k)$ is its $k$-th elementary
representation we conclude
\begin{theorem}
\label{thm:9.3}
 The representation of $GL(n, \CC)$ on the space
$\Gamma_{hol}(\LL)$ is the irreducible representation with highest
weight $\sum_{i=1}^{n-1} m_i \alpha_i$, where $\alpha_1, \cdots,
\alpha_{n-1}$ are the simple roots of $GL(n, \CC)$.
\end{theorem}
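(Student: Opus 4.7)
The plan is to combine three ingredients: Martin's identification (Theorem \ref{thm:9.2}), the ``quantization commutes with reduction'' theorem (Theorem 1.1), and the classical Borel-Weil theorem. Since $M$ is a vector space on which each $\pi_k^*\LL_k$ is a linearized trivial bundle, I read the statement as referring to the finite-dimensional space $\Gamma_{hol}(\LL)^{G_\CC}$, on which the $GL(n,\CC)$-action $\kappa$ of (\ref{eqn:9.5}) commutes with the $G_\CC$-action and hence descends.

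First I would verify compatibility: $\kappa$ commutes with $\tau$, preserves the K\"ahler potential $\rho$, and lifts canonically to each $\pi_k^*\LL_k$ (since each $\pi_k$ intertwines $\kappa|_{\U(n)}$ with right multiplication on $\mathfrak M_{k,n}$, which commutes with the left $GL(k,\CC)$-action and so lifts to $\LL_k$ trivially). Combining this with Theorems 1.1 and \ref{thm:9.2} yields a $GL(n,\CC)$-equivariant isomorphism
\[
   \Gamma_{hol}(\LL)^{G_\CC} \;\cong\; \Gamma_{hol}(\LL_{red}),
\]
where $\LL_{red}$ is the reduced bundle on the coadjoint orbit $\mathcal H(\lambda)$ with $\lambda_i = \sum_{j\ge i} a_j$. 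For the non-degenerate case $\lambda_1>\cdots>\lambda_n$, $\mathcal H(\lambda)$ is biholomorphic to the full flag variety $GL(n,\CC)/B$.

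Next I would exhibit an explicit highest-weight vector. The $G_\CC$-invariant holomorphic sections of the single factor $\pi_k^*\LL_k$ are spanned by the $k\times k$ minors of the product matrix $Z_k Z_{k+1}\cdots Z_{n-1}$, since these are precisely the holomorphic functions on $M$ transforming by $\det(A_k)$ under the left action of $A_k \in GL(k,\CC)$ on $Z_k$, and invariant under the remaining factors of $G_\CC$ (by the classical first fundamental theorem of invariant theory applied stagewise). Under the $GL(n,\CC)$-action by right multiplication on $Z_{n-1}$, these minors span the $k$-th fundamental representation $\bigwedge^k(\CC^n)^*$, whose highest weight vector is the leading principal $k\times k$ minor $s_k^{(0)}$. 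Taking products with multiplicities, the section
\[
   s_0 \;=\; (\pi_1^* s_1^{(0)})^{m_1}\cdots(\pi_{n-1}^* s_{n-1}^{(0)})^{m_{n-1}}
\]
is a nonzero $G_\CC$-invariant holomorphic section of $\LL$ which is a joint eigenvector for the standard upper-triangular Borel subgroup $B$ of $GL(n,\CC)$, with weight $\sum_{k=1}^{n-1} m_k \alpha_k$.

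Finally I would invoke Borel-Weil on $GL(n,\CC)/B$: the cyclic $GL(n,\CC)$-submodule generated by $s_0$ is isomorphic to the irreducible representation of highest weight $\sum m_i\alpha_i$, and the Borel-Weil theorem asserts that for a dominant weight this module already exhausts the space of global sections of the associated line bundle. The main obstacle I expect is bookkeeping: verifying that the weight of $s_0$ computed upstairs really matches the weight attached to $\LL_{red}$ under the chain of identifications $M \to M_a \cong \mathcal H(\lambda) \cong GL(n,\CC)/B$, and handling the degenerate spectrum case, where $\mathcal H(\lambda)$ is only a partial flag variety $GL(n,\CC)/P$; there one must check that the characters $\gamma_k$ whose multiplicities $m_k$ correspond to merged eigenvalues extend across the parabolic directions so that $\LL_{red}$ is pulled back from $GL(n,\CC)/P$, after which the same Borel-Weil argument (now on the partial flag variety) applies verbatim.
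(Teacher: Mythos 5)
Your proposal is correct in substance, but it reaches the conclusion by a genuinely different route than the paper. The paper's own argument (the paragraph preceding the theorem) stays entirely upstairs: it observes that every $G_\CC$-invariant holomorphic section of $\LL$ is a linear combination of products $(\pi_1^*s_1)^{m_1}\cdots(\pi_{n-1}^*s_{n-1})^{m_{n-1}}$, where $s_k$ is a relative invariant of $\LL_k$ (equivalently, a linear combination of the $k\times k$ minors of $Z_kZ_{k+1}\cdots Z_{n-1}$), notes that the $GL(n,\CC)$-span of these minors is the $k$-th elementary representation, and then concludes by the classical fact that the span of such products is the Cartan component, i.e.\ the irreducible module with the stated highest weight; no passage to the coadjoint orbit and no Borel--Weil theorem are invoked. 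You use the same key input -- the stagewise first-fundamental-theorem description of the invariant sections, which is exactly the assertion the paper makes without proof, together with the explicit highest-weight vector built from leading principal minors -- but you obtain irreducibility by going downstairs through Theorems 1.1 and \ref{thm:9.2} and applying Borel--Weil on $\mathcal H(\lambda)$. This is a legitimate alternative, and it has the merit of making the final step explicit (the paper's ``we conclude'' silently appeals to the Cartan-product fact), while your upstairs $B$-eigenvector, sitting inside a space known to be irreducible, settles the weight without the downstairs ``bookkeeping'' you worry about. The price is two verifications the paper's route avoids: (i) Theorem 1.1 is proved in \S 3.4 under the assumption that $M$ is compact, whereas here $M$ is a vector space, so -- just as the paper does for toric varieties in \S 6.2 -- the bijectivity of $\Gamma_{hol}(\LL)^{G_\CC}\to\Gamma_{hol}(\LL_{red})$ must be checked directly, e.g.\ via Corollary \ref{cor:3.7} and the fact that $M-M_{st}$ is an analytic subvariety, or simply via the explicit minor description; and (ii) one must know that the $\U(n)$-equivariant symplectomorphism of Theorem \ref{thm:9.2} identifies $M_a$ with the flag variety as a complex $GL(n,\CC)$-homogeneous space and $\LL_{red}$ with a homogeneous line bundle, which is true but goes beyond what Theorem \ref{thm:9.2} literally states. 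Your treatment of the degenerate spectrum via partial flag varieties parallels the remark at the end of \S 10.
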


For the canonical trivializing section of $\LL$ its Hermitian
inner product with itself is
   \begin{equation*}
   \prod_{i=1}^{n-1} \det(Z_i Z_{i+1}\cdots Z_{n-1} Z_{n-1}^*
   \cdots Z_i^*)^{-m_i}
   \end{equation*}
and hence the potential function for the $\LL$-twisted K\"ahler
structure on $M$ is
   \begin{equation}
   \label{eqn:9.11}
   \rho_\LL = \sum_{i=1}^{n-1} \tr Z_i Z_i^* - m_i \log
   \det (Z_i \cdots Z_{n-1}Z_{n-1}^* \cdots Z_i^*)
   \end{equation}
and the corresponding $\LL$-twisted moment map is
   \begin{equation}
   \label{eqn:9.12}
   \Phi_\LL(Z_1, \cdots, Z_{n-1}) = (Z_1 Z_1^* - m_1I_1,
   \cdots, Z_{n-1}Z_{n-1}^* - m_{n-1}I_{n-1}).
   \end{equation}
   %


\section{Stability theory for coadjoint orbits}
\label{sec:10}

\subsection{The stability function on the Grassmannians $Gr(k,\CC^n)$}
\label{sec:10.1}
To compute this stability function, we first look for the
$G$-invariant sections of the twisted line bundle. For any index
set
   \begin{equation*}
   J = \{j_1, \cdots, j_k\} \subset \{1, 2, \cdots, n\}
   \end{equation*}
denote by $Z_J=Z_{j_1, \cdots, j_k}$ the $k \times k$ sub-matrix
consisting of the $j_1, \cdots, j_k$ columns of $Z$.
\begin{lemma}
\label{lemma:10.1}
The functions $s_J(Z)=\det(Z_J)$ are
$G$-invariant sections of the trivial line bundle on
$\mathfrak{M}_{k,n}$ for the twisted $G$-action.
\end{lemma}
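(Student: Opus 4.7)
The plan is to unpack what ``$G$-invariant'' means for the twisted line bundle, and then observe that the transformation law for $s_J$ is a one-line consequence of the multiplicativity of the determinant.

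First I would recall the setup. The line bundle in question is $\LL_k$, namely the trivial line bundle $\mathfrak{M}_{k,n}\times\CC$ with the $G_\CC = GL(k,\CC)$-action twisted by the character $\gamma_k(A)=\det(A)$ of (\ref{eqn:9.8}); concretely $A\cdot(Z,v) = (AZ,\det(A)\,v)$. In this convention, a section $s:\mathfrak{M}_{k,n}\to\LL_k$, written $s(Z)=(Z,f(Z))$, is $G$-invariant precisely when $f$ intertwines the twist, i.e.\ when
\begin{equation*}
f(AZ) = \det(A)\,f(Z) \qquad \text{for all } A\in G.
\end{equation*}
Restricting to the compact real form $G=\U(k)$ gives the same identity with $A\in\U(k)$, which is what the lemma asserts.

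Next I would perform the computation. Fix $J=\{j_1,\dots,j_k\}\subset\{1,\dots,n\}$ and write $Z_J$ for the $k\times k$ minor consisting of the indicated columns of $Z$. The key observation is that extracting columns commutes with left multiplication: $(AZ)_J = A\,Z_J$, since the $\ell$-th column of $AZ$ is $A$ applied to the $\ell$-th column of $Z$. Therefore
\begin{equation*}
s_J(AZ) \;=\; \det\bigl((AZ)_J\bigr) \;=\; \det(A\,Z_J) \;=\; \det(A)\det(Z_J) \;=\; \gamma_k(A)\,s_J(Z),
\end{equation*}
which is exactly the transformation law for a section of $\LL_k$ to be $G$-invariant.

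There is no real obstacle here: once one pins down the sign convention for the twist, the statement reduces to the multiplicativity of $\det$ and the fact that column selection commutes with left multiplication. The only potential subtlety is bookkeeping the direction of the twist (invariance requires $s(AZ)=\det(A)s(Z)$ rather than $\det(A)^{-1}s(Z)$), which is fixed by cross-referencing with \S 9.1 where exactly the same convention is used to twist the $GL(k,\CC)$-action on the trivial bundle over $\CC^{kn}$ to obtain $\LL_{red}$ on $Gr(k,\CC^n)$.
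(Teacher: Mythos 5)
Your proof is correct, but it follows a genuinely different route from the paper's. You verify the finite (global) equivariance law directly: taking the twisted bundle to be the trivial bundle with $G$ acting on the fiber through the character $\gamma_k=\det$, invariance of a section $f$ means $f(AZ)=\det(A)f(Z)$, and this holds for $s_J$ because $(AZ)_J=A\,Z_J$ and $\det$ is multiplicative; you also correctly flag and resolve the one real subtlety, the direction of the twist, by matching the convention of \S 9.1 (and implicitly \S 6.2, where the same convention makes $z^m$ with $\sum m_i\alpha_i=\alpha$ invariant). The paper instead argues infinitesimally: by Kostant's identity (\ref{eqn:2.7}) together with the metric connection formula (\ref{eqn:2.5}), invariance for the action associated with the shifted moment map $\Phi-I$ reduces to the identity $\iota_{v_H}\partial\log\langle s_J,s_J\rangle=-\sqrt{-1}\,\tr\bigl((ZZ^*-I)H\bigr)$, which is checked by differentiating $\log\bigl(|\det Z_J|^2e^{-\tr ZZ^*}\bigr)$. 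Your argument is more elementary, applies verbatim to all of $G_\CC=GL(k,\CC)$ (which is what is actually used to descend the sections to the GIT quotient), and makes the representation-theoretic content transparent; its only dependence is on taking for granted that ``twisting by $\det$'' is the correct bundle action corresponding to the shift $\Phi\Rightarrow\Phi-I$. The paper's Kostant-identity computation has the advantage of working directly from the moment-map shift, so it certifies at the same time that the character twist is compatible with the Hermitian structure and the shifted moment map, and it is the template reused later for the quiver case (Proposition \ref{prop:11.2}, where the same infinitesimal computation is invoked).
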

\begin{proof}
Let $H$ be any $n \times n$ Hermitian matrix, and $v_H$ the
generator of the one-parameter subgroup generated by $H$. Then by
Kostant's identity (\ref{eqn:2.7}) one only needs to show
   \begin{equation*}
   \iota_{v_H} \partial \log\langle s_J, s_J\rangle = -\sqrt{-1}
   \tr\left((ZZ^*-I)H\right).
   \end{equation*}
This follows from direct computation:
   \begin{equation*}
   \aligned
   \iota_{v_H} \partial \log\langle s_J, s_J\rangle
   & = \iota_{v_H}\partial(-\tr ZZ^* +\log\det(Z_J \bar Z_J))\\
   & =  -\tr((\iota_{v_H}dZ)Z^*) + \iota_{v_H}\partial \tr
        \log(Z_J Z_J^*)\\
   & =  -\tr((\iota_{v_H}dZ)Z^*) + \tr((\iota_{v_H}dZ_J)Z_J^*
        (Z_J^*)^{-1} Z_J^{-1}) \\
   & = -\sqrt{-1}\tr(H(ZZ^*-I)),
   \endaligned
   \end{equation*}
completing the proof.
\end{proof}

Now we are ready to compute the stability function for the
Grassmannians. Without loss of generality, we suppose $\{j_1,
\cdots, j_k\} = \{1, \cdots, k\}$. For any rank $k$ matrix $Z \in
M_{st}$, let $B \in GL(k, \CC)$ be a nonsingular matrix with $BZ
\in \Phi^{-1}(I)$. Thus the stability function at point $Z$ is
   \begin{equation*}
   \aligned
   \psi(Z) &= \log\left(|\det(Z_{1, \cdots, k})|^2 e^{-\tr
   ZZ^*}\right)
   -\log\left(|\det((BZ)_{1, \cdots, k})|^2 e^{-\tr I}\right) \\
   & = k - \tr(ZZ^*) - \log{|\det B|^2}
   \endaligned
   \end{equation*}
Since $B^*B=(Z^*)^{-1}Z^{-1}$, we conclude
   \begin{equation}
   \label{10.1}
   \psi(Z) = k - \tr(ZZ^*) + \log \det(ZZ^*).
   \end{equation}

Similarly, if we do reduction at $mI$ instead of $I$, or
alternately,  use the moment map $\Phi-mI$, then the invariant
sections are given by $s_J(Z)=\det(Z_J)^m$, and the stability
function is
   \begin{equation*}
   \psi(Z) = km - \tr(ZZ^*) + m^2\log \det(ZZ^*).
   \end{equation*}

\subsection{The stability functions on $\U(n)$-coadjoint orbits}
\label{sec:10.2}
These stability functions are computed in more or less the same
way as above. By the same arguments as in the proof of lemma
\ref{lemma:10.1}, one can see that
   \begin{equation}
   \label{eqn:10.2}
   s(Z_1, \cdots, Z_{n-1})= \prod (\det(Z_i)_{1, \cdots,
   i})^{m_i-m_{i-1}}
   \end{equation}
is $G$-invariant for the moment map $\Phi-(m_1I_1, \cdots,
m_{n-1}I_{n-1})$.

Now suppose $(Z_1, \cdots, Z_{n-1}) \in M_{st}$, then there are
$B_i \in GL(i, \CC)$ such that
   \begin{equation}
   \label{eqn:10.3}
   B_1Z_1Z_1^*B_1^* = m_1 I_1
   \end{equation}
and
   \begin{equation}
   \label{eqn:10.4}
   B_iZ_iZ_i^*B_i^* = Z_{i-1}^* B_{i-1}^* B_{i-1}Z_{i-1} + m_i
   I_i, \qquad 2 \le i \le n-1.
   \end{equation}
>From (\ref{eqn:10.3}) we have
   \begin{equation*}
   \det(B_iB_1^*)=m_1\det(Z_1Z_1^*)^{-1},
   \end{equation*}
and from this and (\ref{eqn:10.4}) we conclude
   \begin{equation*}
   \aligned
   \det(B_iZ_iZ_i^*B_i^*)
   & = \det (m_i I_i + B_{i-1}Z_{i-1} Z_{i-1}^*B_{i-1}^*) \\
   & = \det ((m_i+m_{i-1})I_{i-1} + B_{i-2}Z_{i-2}
   Z_{i-2}^*B_{i-2}^*)
   \\
   & = m_1+\cdots+m_i.
   \endaligned
   \end{equation*}
So we get for all $i$,
   \begin{equation*}
   \det(B_iB_i^*) = (m_1+\cdots+m_i)\det(Z_iZ_i^*)^{-1}.
   \end{equation*}
Now it is easy to compute
   \begin{equation*}
   \aligned
   \psi(Z)
   & = \log \left(e^{-\sum \tr(Z_iZ_i^*)}
       \prod |\det(Z_i)_{1, \cdots, i}|^{2m_i-2m_{i-1}}
        \right)\\
   & \indent
       -\log \left( e^{-\sum i m_i}
      \prod |\det(B_iZ_i)_{1, \cdots, i}|^{2m_i-2m_{i-1}}
      \right) \\
   & = \sum im_i - \sum \tr(Z_iZ_i^*) - \sum (m_i-m_{i-1})
       \log |\det B_i|^2 \\
   & = \sum im_i - \sum \tr (Z_iZ_i^*) + \sum (m_i-m_{i-1})
       (m_1+\cdots+m_i) \log \det (Z_iZ_i^*).
   \endaligned
   \end{equation*}
\begin{remark}
Although we only carry out the computations for generic
$\U(n)$-coadjoint orbits, i.e., for the isospectral sets with
$\lambda_1 < \cdots < \lambda_n$, the same argument apply to all
$\U(n)$-coadjoint orbits. In fact, for the isospectral set with
$\lambda_1 < \cdots < \lambda_r$ whose multiplicities are $i_1,
\cdots, i_r$, we can take the upstairs space to be
   \begin{equation*}
   \mathfrak M_{i_1 \times (i_1+i_2)} \times \mathfrak
   M_{(i_1+i_2) \times (i_1+i_2+i_3)} \times \mathfrak
   M_{(n-i_r) \times n}
   \end{equation*}
and obtain results for these degenerate coadjoint orbits
completely analogous to those above.
\end{remark}


\section{Stability functions on quiver varieties}
\label{sec:11}
It turns out that the results above can be generalized to a much
larger class of manifolds: quiver varieties. We will give a brief
account of this below.

\subsection{Quiver Varieties}
\label{sec:11.1}
Let's first recall some notations from quiver algebra theory. A
\emph{quiver} $Q$ is an oriented graph $(I, E)$, where $I=\{1, 2, \cdots,
n\}$ is the set of vertices, and $E \subset I \times I$ the set of
edges. A representation, $V$, of a quiver assigns a Hermitian
vector space $V_i$ to each vertex $i$ of the quiver and a linear
map $Z_{ij} \in \Hom(V_i, V_j)$ to each edge $(i,j) \in E$. The
dimension vector of the quiver representation $V$ is the vector
$l=(l_1, \cdots, l_n)$, where $l_i = \dim V_i$. Thus the space of
representations of $Q$ with underlying vector spaces $V$ fixed is
the complex space
   \begin{equation}
   \label{eqn:11.1}
   M = \Hom(V) := \bigoplus_{(i,j) \in E} \Hom(V_i, V_j).
   \end{equation}
We equip $M$ with its standard symplectic form and consider the
unitary group $U(V)=U(V_1) \times \cdots \times U(V_n)$ acting on
$M$ by
   \begin{equation}
   \label{eqn:11.2}
   (u_1, \cdots, u_n) \cdot (Z_{ij}) = (u_j Z_{ij} u_i^{-1}).
   \end{equation}
The isomorphism classes of representations of $Q$ of dimension $l$
is in bijection with the $GL(V)$-orbits on $\Hom(V)$.
Geometrically this quotient space can have bad singularities, and
to avoid this problem, one replaces this quotient by its GIT
quotient, or equivalently, the K\"ahler quotient of $\Hom(V)$ by
the $U(V)$-action. These quotients are what one calls \emph{quiver
varieties}.
\begin{proposition}
\label{prop:11.1}
The action (\ref{eqn:11.2}) is Hamiltonian with moment map $\mu:
\Hom(V) \to \fg^*$,
   \begin{equation}
   \label{eqn:11.3}
   \mu(Z_{ij}) = \left(\sum_{(j,1) \in E} Z_{j1}Z_{j1}^*-\sum_{(1,j)
   \in E}Z_{1j}^*Z_{1j},
   \cdots, \sum_{(j,n) \in E} Z_{jn}Z_{jn}^*-\sum_{(n,j) \in
   E}Z_{nj}^*Z_{nj}\right).
   \end{equation}
\end{proposition}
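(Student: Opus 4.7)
The strategy is to mimic Lemma \ref{lemma:9.1} essentially verbatim, the only substantive change being the combinatorial bookkeeping of which unitary factor acts on which matrix $Z_{ij}$. The action (\ref{eqn:11.2}) preserves the K\"ahler potential $F(Z) = \sum_{(i,j)\in E}\tr Z_{ij}Z_{ij}^*$ (equivalently, it preserves $\rho = e^{-F}$), so it also preserves the $(1,0)$-form $\alpha = \sqrt{-1}\,\partial\log\rho$. Since $\omega = d\alpha$ and $\alpha$ is invariant under the action, the standard Cartan homotopy identity shows (modulo the sign conventions of \S 2) that the function $\iota_{v_H}\alpha$ serves as the pairing $\langle \mu(Z),H\rangle$ for every $H=(H_1,\dots,H_n)$ in $\fg = \sqrt{-1}\,\mathrm{Lie}(U(V))$. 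The whole task therefore reduces to computing $\iota_{v_H}\alpha$ explicitly and rearranging the result to match (\ref{eqn:11.3}).

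For the computation, I would note that the one-parameter subgroup generated by $H$ sends $Z_{ij}$ to $e^{\sqrt{-1}tH_j}Z_{ij}e^{-\sqrt{-1}tH_i}$, which at $t=0$ gives
\[
\iota_{v_H}dZ_{ij} = \sqrt{-1}\,(H_j Z_{ij} - Z_{ij}H_i).
\]
Combining this with $\partial\log\rho = -\sum_{(i,j)\in E}\tr(dZ_{ij}\cdot Z_{ij}^*)$ and using cyclicity of trace yields
\[
\iota_{v_H}\alpha \;=\; \sum_{(i,j)\in E}\bigl(\tr(H_j Z_{ij}Z_{ij}^*) - \tr(H_i Z_{ij}^*Z_{ij})\bigr),
\]
which is the direct quiver analogue of the central identity appearing in the proof of Lemma \ref{lemma:9.1}.

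The last step is to reorganize the double sum by vertex. Each vertex $k$ picks up a contribution $\tr(H_k Z_{jk}Z_{jk}^*)$ from every edge $(j,k)\in E$ whose head is $k$, and a contribution $-\tr(H_k Z_{kj}^*Z_{kj})$ from every edge $(k,j)\in E$ whose tail is $k$. Collecting terms gives
\[
\iota_{v_H}\alpha \;=\; \sum_{k=1}^n \tr\!\left(H_k\!\left(\sum_{(j,k)\in E} Z_{jk}Z_{jk}^* \;-\; \sum_{(k,j)\in E} Z_{kj}^*Z_{kj}\right)\right),
\]
which, under the identification $\fg^* \simeq \prod_k \mathcal{H}_{l_k}$ via the trace pairing (as in \S 9), is precisely $\langle \mu(Z),H\rangle$ with $\mu$ the map (\ref{eqn:11.3}). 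There is essentially no genuine obstacle here; the argument is a routine adaptation of Lemma \ref{lemma:9.1}, and the only point that demands care is the head-versus-tail bookkeeping, which is responsible for the asymmetric $Z Z^*$ versus $Z^* Z$ pattern in the components of $\mu$.
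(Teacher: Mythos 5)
Your proposal is correct and is exactly the route the paper intends: the paper omits the proof of Proposition \ref{prop:11.1}, stating that it is the same computation as Lemma \ref{lemma:9.1}, and your argument carries out precisely that computation (infinitesimal action $\iota_{v_H}dZ_{ij}=\sqrt{-1}(H_jZ_{ij}-Z_{ij}H_i)$, contraction with $\sqrt{-1}\,\partial\log\rho$, cyclicity of trace, and regrouping by vertex). The head-versus-tail bookkeeping you highlight is indeed the only new ingredient relative to Lemma \ref{lemma:9.1}.
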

The proof involves the same computation as in lemma
\ref{lemma:9.1}, so we will omit it.

Notice that by (\ref{eqn:11.2}) the circle group
$\{(e^{i\theta}I_{l_1}, \cdots, e^{i\theta}I_{l_n})\}$ act
trivially on $M$, so we get an induced action of the quotient
group $G=U(V)/S^1$. The Lie algebra of $G$ is given by
   \begin{equation*}
   \{(H_1, \cdots, H_n)\ |\ H_i \mbox{\ Hermitian\ }, \sum
   \tr{H_i}=0\}
   \end{equation*}
and this $G$-action also has $\mu$ as its moment map. Letting
$(\lambda_1, \cdots, \lambda_n) \in \RR^n$  with
   \begin{equation*}
   l_1 \lambda_1 + \cdots + l_n \lambda_n = 0,
   \end{equation*}
and supposing that the $G$-action is free on $\mu^{-1}(\lambda
I)$, the \emph{quiver variety} associated to $\lambda$ is by
definition the quotient
   \begin{equation*}
   R_\lambda(l) = \mu^{-1}(\lambda I)/G,
   \end{equation*}
where $\lambda I = (\lambda_1 I_{l_1}, \cdots, \lambda_n
I_{l_n})$.

We can also modify the definition of quiver varieties to get an
effective $U(V)$-action. Namely, we attach to $Q$ another
collection of Hermitian vector spaces (the ``frame"), $\tilde
V=(\tilde V_1, \cdots, \tilde V_n)$,  with dimension vector
$\tilde l = (\tilde l_1, \cdots, \tilde l_n)$, and redefine the
space $M$ to be
   \begin{equation*}
   \Hom(V, \tilde V) := \bigoplus_{(i,j) \in E} \Hom(V_i, V_j) \oplus
                 \bigoplus_{i \in I} \Hom(V_i, \tilde V_i).
   \end{equation*}
The group $U(V)$ acts on $\Hom(V,\tilde V)$ by
   \begin{equation*}
   (u_1, \cdots, u_n) \cdot (Z_{ij}, Y_i) =
   (u_j Z_{ij} u_i^{-1}, Y_i u_i^{-1}).
   \end{equation*}
As above the $U(V)$-action is Hamiltonian, and
the $k^{th}$ component of its moment map is
   \begin{equation*}
   \left(\mu(Z_{ij}, Y_i)\right)_k = \sum_{(j,k) \in E}
   Z_{jk}Z_{jk}^*-\sum_{(k,j) \in E}Z_{kj}^*Z_{kj} - Y_k^* Y_k.
   \end{equation*}
Now the center $S^1$ acts nontrivially on $\Hom(V,\tilde V)$
providing that the ``frames" $\tilde V_i$ are not all zero, and we
define the \emph{framed quiver variety} $R_\lambda(l, \tilde l)$
to be the K\"ahler quotient of $\Hom(V, \tilde V)$ by the
$U(V)$-action above at the level $\lambda=(\lambda_1 I_{l_1},
\cdots, \lambda_n I_{l_n})$. As examples, the Grassmannian and the
coadjoint orbit of $\mathcal U(n)$ that we considered in the
previous section are just the framed quiver varieties whose
underlying quivers are depicted below:

\setlength{\unitlength}{1cm}
\begin{picture}(10,2)
\put(1,0.5){$\CC^n$} \put(1,1.5){$\CC^k$}
\put(1.1,1.47){\vector(0,-1){0.7}} \put(3,1.5){$\CC^1$}
\put(4.2,1.5){$\CC^2$} \put(6.7,1.5){$\CC^{n-2}$}
\put(4.6,1.62){\vector(1,0){0.7}} \put(8.3,1.5){$\CC^{n-1}$}
\put(8.3,0.5){$\CC^n$} \put(3.45,1.62){\vector(1,0){0.75}}
\put(5.4, 1.5){$\cdots$}
\put(5.85,1.62){\vector(1,0){0.7}}\put(7.5,1.62){\vector(1,0){0.7}}
\put(8.4,1.47){\vector(0,-1){0.7}}
\end{picture}

\subsection{Stability functions}
\label{sec:11.2}
As in \S 10 we equip $M$ with the trivial line bundle and, for
actions of $\mathcal U(V)$ associated with characters $\prod (\det
A_i)^{\lambda_i}$, describe the invariant sections.
\begin{proposition}
\label{prop:11.2}
For fixed $\lambda \in \ZZ^n$, the sections
   \begin{equation}
   \label{eqn:11.4}
   s(Z_{ij}) = \prod_{(i,j) \in E}\det((Z_{ij})_J)^{\nu_{ij}}
   \end{equation}
are invariant sections with respect to the moment map $\mu-
\lambda I$, where $\nu_{ij}$ are integers satisfying
   \begin{equation}
   \label{eqn:11.5}
   \sum_j \nu_{ji} - \sum_j \nu_{ij} = \lambda_i.
   \end{equation}
\end{proposition}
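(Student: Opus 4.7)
The plan is to mimic the proof of Lemma \ref{lemma:10.1}. Since $s$ is a holomorphic section of the trivial line bundle over $\Hom(V)$ with metric compatible with (\ref{eqn:9.1}), Kostant's identity (\ref{eqn:2.7}) together with $\nabla s/s = \partial\log\langle s,s\rangle$ reduces the invariance of $s$ with respect to the moment map $\mu-\lambda I$ to the pointwise verification
$$\iota_{v_H}\partial \log \langle s,s\rangle \;=\; -\sqrt{-1}\,\langle \mu(Z)-\lambda I,\,H\rangle$$
for every $H=(H_1,\ldots,H_n)$ in the Lie algebra of $U(V)$. The Bargmann metric adapted to $\Hom(V)$ gives
$$\log\langle s,s\rangle \;=\; -\sum_{(i,j)\in E}\tr(Z_{ij}Z_{ij}^{*}) \;+\; \sum_{(i,j)\in E} 2\nu_{ij}\log|\det(Z_{ij})_{J}|,$$
so the check splits into two pieces corresponding to these summands, and the goal is to see that they sum respectively to $-\sqrt{-1}\langle\mu(Z),H\rangle$ and $+\sqrt{-1}\langle\lambda I,H\rangle$.

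For the first piece, the infinitesimal action formula $\iota_{v_H}dZ_{ij} = \sqrt{-1}(H_{j}Z_{ij}-Z_{ij}H_{i})$ combined with cyclicity of trace gives
$$-\tr\!\bigl((\iota_{v_H}dZ_{ij})Z_{ij}^{*}\bigr) \;=\; -\sqrt{-1}\bigl(\tr(H_{j}Z_{ij}Z_{ij}^{*}) - \tr(H_{i}Z_{ij}^{*}Z_{ij})\bigr).$$
Summing over edges $(i,j)\in E$ and regrouping by vertex $k$ reproduces the $k$-th component of the moment map formula (\ref{eqn:11.3}) of Proposition \ref{prop:11.1}, yielding $-\sqrt{-1}\langle\mu(Z),H\rangle$. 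This is essentially a carbon copy of the computation already carried out in Lemma \ref{lemma:9.1}.

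For the second piece, Jacobi's formula $\partial\log\det A = \tr(A^{-1}\partial A)$ applies on the open locus where each $(Z_{ij})_{J}$ is invertible and gives
$$\iota_{v_H}\partial\log\det(Z_{ij})_{J} \;=\; \sqrt{-1}\,\tr\!\bigl((Z_{ij})_{J}^{-1}(H_{j}Z_{ij}-Z_{ij}H_{i})_{J}\bigr).$$
In the square case (which already covers the Grassmannians of \S \ref{sec:10.1} and the $U(n)$-coadjoint orbits of \S \ref{sec:10.2}), similarity invariance collapses the right side to $\sqrt{-1}(\tr H_{j}-\tr H_{i})$. Multiplying by $\nu_{ij}$, summing over edges, and grouping by vertex $k$ produces the coefficient $\sum_{j}\nu_{jk}-\sum_{j}\nu_{kj}$ in front of $\sqrt{-1}\tr H_{k}$; the constraint (\ref{eqn:11.5}) is precisely what forces this coefficient to equal $\lambda_{k}$, so the piece sums to $\sqrt{-1}\sum_{k}\lambda_{k}\tr H_{k}=\sqrt{-1}\langle\lambda I,H\rangle$. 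Combining both pieces gives the desired identity.

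The main technical obstacle lies in justifying the trace collapse $\tr((Z_{ij})_{J}^{-1}(H_{j}Z_{ij}-Z_{ij}H_{i})_{J})=\tr H_{j}-\tr H_{i}$ when $(Z_{ij})_{J}$ is a non-full square submatrix: the row-action of $H_{j}$ still extracts $\tr H_{j}$ cleanly, but the column-action of $H_{i}$ requires that $J$ be chosen so that $(Z_{ij}H_{i})_{J}=(Z_{ij})_{J}(H_{i}|_{J})$ for a well-defined partial trace $\tr(H_{i}|_{J})$. Arranging this compatibly at every vertex of the quiver is where the bookkeeping becomes delicate; once it is in place, the two pieces combine to establish invariance under the shifted moment map $\mu-\lambda I$.
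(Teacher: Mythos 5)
Your overall route is exactly the one the paper intends: its proof of Proposition \ref{prop:11.2} is only a pointer to Lemma \ref{lemma:10.1}, and your reduction to the pointwise identity via Kostant's formula (\ref{eqn:2.7}) together with the treatment of the metric term is a correct repetition of the computation in Lemma \ref{lemma:9.1}. The gap is precisely the step you flag and then defer to ``bookkeeping'': the trace collapse $\tr\bigl((Z_{ij})_J^{-1}(Z_{ij}H_i)_J\bigr)=\tr H_i$ is false whenever $(Z_{ij})_J$ is a proper square submatrix, and no choice of $J$ can rescue it, because $H_i$ ranges over \emph{all} Hermitian matrices. Writing $Z_{ij}=\bigl[\,Z_J\,\big|\,Z_{J^c}\,\bigr]$ and $H_i$ in block form with respect to $J\cup J^c$, one finds
\begin{equation*}
\tr\bigl(Z_J^{-1}(Z_{ij}H_i)_J\bigr)=\tr\bigl((H_i)_{JJ}\bigr)+\tr\bigl(Z_J^{-1}Z_{J^c}(H_i)_{J^cJ}\bigr),
\end{equation*}
so the defect from $\tr H_i$ is $\tr\bigl(Z_J^{-1}Z_{J^c}(H_i)_{J^cJ}\bigr)-\tr\bigl((H_i)_{J^cJ^c}\bigr)$: besides the missing diagonal block there is a $Z$-dependent term coming from the off-diagonal block of $H_i$, which cannot be absorbed by any constant shift $\lambda I$ of the moment map. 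In particular your proposed identity $(Z_{ij}H_i)_J=(Z_{ij})_J(H_i|_J)$ fails whenever $(H_i)_{J^cJ}\neq 0$, and the analogous defect appears on the row side when $J$ cuts rows (e.g.\ for the $2\times 1$ edge matrices of the polygon spaces). As written, your argument therefore establishes invariance only when every factor uses a full square edge matrix, or when the side being cut is not acted on by the gauge group --- which is exactly the one-sided situation of Lemma \ref{lemma:10.1}.

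The repair is not a cleverer choice of $J$ but a different choice of sections: one should take subdeterminants of \emph{composites} of the edge maps along paths terminating at an ungauged (framing or sink) vertex, as in (\ref{9.10}) of \S \ref{sec:9.3}, where $\pi_k(Z)=Z_kZ_{k+1}\cdots Z_{n-1}$. For such composites the intermediate $H$'s cancel telescopically, $\iota_{v_H}\,d\pi_k=\sqrt{-1}\,H_k\,\pi_k$ is a purely one-sided action, and the collapse $\tr\bigl((\pi_k)_J^{-1}H_k(\pi_k)_J\bigr)=\tr H_k$ is legitimate; the Lemma \ref{lemma:10.1} computation then goes through verbatim and the weight bookkeeping reproduces a constraint of the form (\ref{eqn:11.5}). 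Your proof needs either this modification of the sections (\ref{eqn:11.4}) or an explicit hypothesis restricting the quiver and the index sets $J$ so that every cut side is ungauged; without one of these, the key step fails.
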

The proof is essentially the same proof as that of Lemma
\ref{lemma:10.1}.

>From now on we will require that the quiver,$Q$, be noncyclic,
otherwise there will be infinitely many $G$-invariant sections.
(Moreover, in the cyclic case the quiver variety is not compact.)
For a general quiver variety whose underlying quiver is noncyclic,
we can, in principle, compute the stability function, using the
$G$-invariant sections above, as we did for toric varieties in \S
7; but in practice the computation can be quite complicated.

However, in the special case that the quiver is a star quiver,
i.e., is of the following shape:

\setlength{\unitlength}{1cm}
\begin{picture}(6,2)
\put(1,1){$\bullet$} \put(2,0.25){$\bullet$}
\put(3,0.25){$\bullet$} \put(4,0.25){$\bullet$}
\put(5,0.25){$\bullet$} \put(2,0.75){$\vdots$}
\put(2,1.25){$\bullet$} \put(3,1.25){$\bullet$}
\put(4,1.25){$\bullet$} \put(5,1.25){$\bullet$}
\put(2,1.75){$\bullet$} \put(3,1.75){$\bullet$}
\put(4,1.75){$\bullet$} \put(5,1.75){$\bullet$}
\put(2,1.85){\vector(-4,-3){0.86}}
\put(2,1.35){\vector(-4,-1){0.77}}
\put(2,0.35){\vector(-4,3){0.86}}
\put(3,0.35){\vector(-1,0){0.85}}
\put(5,0.35){\vector(-1,0){0.85}}
\put(3,1.35){\vector(-1,0){0.85}}
\put(5,1.35){\vector(-1,0){0.85}}
\put(3,1.85){\vector(-1,0){0.85}}
\put(5,1.85){\vector(-1,0){0.85}} \put(3.4,0.25){$\cdots$}
\put(3.4,1.25){$\cdots$} \put(3.4,1.75){$\cdots$}
\end{picture}

\noindent one can write down the stability functions fairly
explicitly: on each ``arm", we just apply the same technique we
used for the coadjoint orbits of $\mathcal U(n)$.

As an example, we'll compute the stability function for
\emph{polygon space}. This is by definition a quiver variety whose
underlying quiver is the oriented graph

\setlength{\unitlength}{1cm}
\begin{picture}(4,2)
\put(1,1){$\bullet$}
\put(2,0.25){$\bullet$}
\put(2,0.75){$\vdots$}
\put(2,1.25){$\bullet$}
\put(2,1.75){$\bullet$}
\put(2,1.85){\vector(-4,-3){0.86}}
\put(2,1.35){\vector(-4,-1){0.77}}
\put(2,0.35){\vector(-4,3){0.86}}
\put(0.15,0.95){$m+1$}
\put(2.3,1.7){$1$}
\put(2.3,1.2){$2$}
\put(2.3,0.2){$m$}
\end{picture}

\noindent and for which the $V_i$'s satisfy $\dim V_i=1$ for $1
\le i \le m$ and $\dim V_{m+1}=2$. Thus
   \begin{equation}
   \label{eqn:11.6}
   \Hom(V)=\bigoplus \Hom(\CC, \CC^2) = (\CC^2)^m
   \end{equation}
and
   \begin{equation}
   \label{eqn:11.7}
   G = (S^1)^m \times U(2)/S^1 \simeq (S^1)^m \times SO(3).
   \end{equation}
The moment map for this data is
   \begin{equation}
   \label{eqn:11.8}
   (Z_1, \cdots, Z_m) \mapsto (-|Z_1|^2, \cdots, -|Z_m|^2,
   Z_1Z_1^*+\cdots+Z_mZ_m^*),
   \end{equation}
where $Z_i=(x_i, y_i) \in \CC^2$.

Now consider the quiver variety $\mu^{-1}(\lambda I)/G$, with
$\lambda=(\lambda_1, \cdots, \lambda_m, \lambda_{m+1})$ satisfying
   \begin{equation*}
   \lambda_1 + \cdots + \lambda_m+2\lambda_{m+1}=0
   \end{equation*}
and $\lambda_i<0$ for $1 \le i \le m$. Let's explain why this
variety is called ``polygon space". The $(S^1)^m$-action on
$(\CC^2)^m$ is the standard action, so reducing at level
$(\lambda_1, \cdots, \lambda_m)$ gives us a product of spheres
$S^2_{-\lambda_1} \times \cdots \times S^2_{-\lambda_m}$ of radii
$-\lambda_1, \cdots, -\lambda_m$. So we can think of an element of
$S^2_{-\lambda_1} \times \cdots \times S^2_{-\lambda_m}$ as a
polygon path in $\RR^3$ whose $i^{th}$ edge is a vector of length
$-\lambda_i$ in $S^2_{-\lambda_i}$. The $SO(3)$-action on this
product of spheres is the standard diagonal action, and the moment
map sums up the points, i.e. takes as its value the endpoint of
the polygon path. However, under the identification
(\ref{eqn:11.7}), the Lie algebra of $SO(3)$ gets identified with
$\mathcal H(2)/\{a I_2\}$. Thus the fact that the last entry of
the moment map (\ref{eqn:11.8}) equals $\lambda_{m+1}I_2$ implies
that this endpoint is the origin in the Lie algebra of $SO(3)$. In
other words, our polygon path is a polygon. So the quiver variety
$R_{\lambda}(1, \cdots, 1, 2)$ is just the space of all polygons
in $\RR^3$ whose sides are of length $-\lambda_1, \cdots,
-\lambda_m$, up to rotation.

Using the invariant section $s(Z)=\prod_{i=1}^m x_i^{-\lambda_i}$
to compute the stability function for this space we have
\begin{equation*}
\begin{aligned}
   \psi(Z) &= -\sum(|x_i|^2+|y_i|^2) + \sum (-\lambda_i)\log{|x_i|^2} \\
           &  \;\;\;\; \;\;\;+ \sum (-\lambda_i) -
             \sum (-\lambda_i)\log{\frac{-\lambda_i|x_i|^2}
             {|x_i|^2+|y_i|^2}} \\
            & = 2 \lambda_{m+1} - |Z|^2 + \sum \lambda_i
               \log \frac{-\lambda_i}{|Z_i|^2}.
   \end{aligned}
   \end{equation*}
Finally we point out that everything we have said above applies to
framed quiver varieties, in which case the $U(V)$-action is free
on $\Phi^{-1}(\lambda I)$. The coadjoint orbits of \S 10 are just
special cases of quiver varieties of this type.


\section*{appendix}

In this appendix we will give a proof of Boutet de Monvel-Guillemin's
theorem on the asymptotics of the density of states, (\ref{eqn:1.18}),
adapted to the Toeplitz operator setting.

We will begin with a very brief account on the definition of
Toeplitz operators. Let $W$ be a compact strictly pseudoconvex
domain with smooth boundary $\partial W$. One defines the space of
Hardy functions, $H^2$, to be the $L^2$-closure of the space of
$C^\infty$ functions on $\partial W$ which can be extended to
holomorphic functions on $W$. The orthogonal projection $\pi: L^2
\to H^2$ is called the Szeg\"o projector, and an operator $T:
C^\infty(\partial W) \to C^\infty(\partial W)$ is called a
Toeplitz operator if it can be written in the form
   \begin{equation*}
   T = \pi P \pi
   \end{equation*}
for some pseudodifferential operator $P$ on $\partial W$.

Now suppose $(\LL, \langle \cdot, \cdot \rangle)$ is a Hermitian
line bundle over a compact K\"ahler manifold $X$. Let
   \begin{equation*}
   D = \{(x, v) \in \LL^*\ |\ v \in \LL_x^*, \|v\|\le 1\}
   \end{equation*}
be the disc bundle in the dual bundle. As observed by Grauert, $D$
is a strictly pseudoconvex domain in $\LL$. The manifold we are
interested in is its boundary,
   \begin{equation*}
   M =\partial D = \{(x, v) \in \LL^*\ |\ v \in \LL_x^*,
   \|v\|=1\},
   \end{equation*}
the unit circle bundle in the dual bundle. Let $Q$ be the operator
   \begin{equation*}
   Q: H^2 \to H^2,\ Qf(x,v) = \sqrt{-1}\left.\frac{\partial}{\partial
   \theta} f(x, e^{i\theta}v)\right|_{\theta=0}.
   \end{equation*}
This is a first order elliptic operator in the Toeplitz sense and
is a Zoll operator (meaning that its spectrum only consists of
positive integers). Moreover, the $n^{th}$ eigenspace of $Q$
coincides with $\Gamma_{hol}(\LL^n, X)$. For any smooth function
$f \in C^\infty(X)$, let $M_f$ be the operator ``multiplication by
$f$". We may view $\Gamma_{hol}(\LL^n, X)$ as a subspace of $H^2$,
and denote by
   \begin{equation*}
   \pi_n: L^2(\LL^n, X) \to \Gamma_{hol}(\LL^n, X)
   \end{equation*}
the orthogonal projection.
\begin{theorema}
\label{theorem:A}
There is an asymptotic expansion
   \begin{equation*}
   \tr(\pi_n M_f \pi_n) \sim \sum_{k=d-1}^{-\infty} a_k(f) n^k, \quad n
   \to \infty,
   \end{equation*}
where $d=\dim{X}$.
\end{theorema}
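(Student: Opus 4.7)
The plan is to encode $\{\tr(\pi_n M_f \pi_n)\}_{n\ge 0}$ as Fourier coefficients of a distribution on $S^1$, to determine the singularity of that distribution at the origin via the Hermite/Toeplitz calculus of Boutet de Monvel-Guillemin, and then to read off the asymptotic expansion by Fourier inversion. First I would identify the generating series as a trace: letting $T_f := \pi M_f \pi$ be the Toeplitz operator on the Hardy space $H^2$, the fact that $Q$ is Zoll with spectral projector $\pi_n$ onto $\Gamma_{hol}(\LL^n,X)$ and commutes with $\pi$ gives $\pi_n T_f \pi_n = \pi_n M_f \pi_n$, so
$$\Theta_f(t) := \tr_{H^2}\!\bigl(T_f\, e^{itQ}\bigr) = \sum_{n\ge 0} \tr(\pi_n M_f \pi_n)\, e^{int},$$
and the coefficient asymptotics are encoded in the singularities of $\Theta_f$.

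Next I would describe the microlocal structure. The Szeg\"o projector $\pi$ is a Fourier integral operator of Hermite type whose wave-front set lies on the positive symplectic cone $\Sigma \subset T^*M \setminus 0$ generated by the contact form of the circle bundle $M \to X$; the propagator $e^{itQ}$ rotates the fibres of $M$ and is a classical FIO whose canonical relation is the graph of this rotation. By the composition calculus for Hermite FIOs of Boutet de Monvel-Guillemin, $T_f e^{itQ}$ stays in this calculus; the distribution $\Theta_f$ is smooth on $S^1 \setminus 2\pi\ZZ$, and periodicity of $Q$ ensures that the singularities at $t = 2\pi k$, $k \ne 0$, are mere translates of the one at $t = 0$ and contribute only to lower-order terms in the final expansion.

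The heart of the argument is a conormal expansion at $t=0$. Using a Boutet de Monvel-Sj\"ostrand parametrix for the Szeg\"o kernel, I would write the Schwartz kernel of $T_f e^{itQ}$ as an oscillatory integral and integrate along the diagonal. At $t=0$ the rotation fixes all of $\Sigma$, so the phase is stationary in directions transverse to $\Sigma$ with nondegenerate Hessian; stationary phase then yields a conormal expansion
$$\Theta_f(t) \sim \sum_{j \ge 0} \frac{b_j(f)}{(-it+0)^{D-j}}, \qquad t \to 0,$$
with $D$ fixed by a dimensional count on $\Sigma$ and each $b_j(f)$ an integral over $X$ of a universal differential polynomial in $f$ and the K\"ahler data, with leading coefficient $b_0(f)$ proportional to $\int_X f\,\omega^d/d!$.

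Finally, on $S^1$ the distribution $(-it+0)^{-s}$ has Fourier coefficients $n_+^{s-1}/\Gamma(s)$ modulo rapidly decaying terms, so applying this term by term to the conormal expansion gives
$$\tr(\pi_n M_f \pi_n) \sim \sum_{j\ge 0} \frac{b_j(f)}{\Gamma(D-j)}\, n^{D-j-1}, \qquad n \to \infty,$$
which after reindexing is the asserted expansion $\sum_k a_k(f)\, n^k$. The main obstacle is establishing the precise form of the conormal singularity in the third step: it requires the full Hermite/Toeplitz calculus, i.e.\ a composition formula for $\pi$ with the contact rotation $e^{itQ}$, the identification of the resulting canonical relation with $\Sigma$, and a transverse stationary-phase calculation at the fixed locus. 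The remaining steps are either formal, standard FIO manipulations, or a routine Fourier/Tauberian inversion.
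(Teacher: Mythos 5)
Your proposal is correct and is essentially the paper's own argument: both encode $\tr(\pi_n M_f \pi_n)$ as the Fourier coefficients of the generating trace $\tr(e^{itQ}M_f)$ and recover the asymptotic expansion from the singularity of that distribution at $t=0$. The only difference is that the paper simply cites the Toeplitz calculus of \cite{BoG}, \S 13, for the singularity expansion, written in the basis $\chi_k(t)=\sum_{n>0}n^k e^{int}$ (which also absorbs the logarithmic terms your pure-power conormal ansatz would need at low orders), whereas you sketch how that expansion is obtained via the Hermite calculus and transverse stationary phase before performing the same Fourier-coefficient comparison.
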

\begin{proof}
By functorial properties of Toeplitz operators (c.f. \cite{BoG} \S
13),
   \begin{equation*}
   \tr(e^{itQ}M_f) \sim \sum a_k \chi_k(t)
   \end{equation*}
where
   \begin{equation*}
   \chi_k(t) = \sum_{n>0} n^k e^{int}.
   \end{equation*}
On the other hand,
   \begin{equation*}
   \tr(e^{itQ}M_f) = \sum e^{int}\tr{\pi_n M_f \pi_n}.
   \end{equation*}
By comparing the coefficient of $e^{int}$, we get the theorem.
\end{proof}

Finally we point out that the coefficients $a_k$ in the asymptotic
expansion above are given by the noncommutative residue trace on
the algebra of Toeplitz operators, \cite{Gui93}. In fact, for
$\Re(z) \gg 0$, theorem A gives
   \begin{equation*}
   \tr(Q^{-z}\pi_n M_f \pi_n) \sim \sum_{k=d-1}^{-\infty}
   a_k n^{k-z}.
   \end{equation*}
Summing over $n$,
   \begin{equation*}
   \tr(Q^{-z}M_f) \sim \sum_k a_k \zeta(z-k),
   \end{equation*}
where $\zeta$ is the classical zeta function, which implies
   \begin{equation*}
   a_{k-1} = \res_{z=k}(Q^{-z}M_f),
   \end{equation*}
the noncommutative residue.


\end{document}